\newtheorem{definition}{Definition}
\newtheorem{theorem}{Theorem}[section]
\newtheorem{lemma}{Lemma}[section]
\newtheorem{proposition}{Proposition}[section]
\newtheorem{coro}{Corollary}[section]
\newtheorem{remark}{Remark}[section]
\newcommand \R {\mathbb{R}}
\newcommand \E {\mathbb{E}}
\newcommand \PP {\mathcal{P}}
\newcommand \Interval[1] {\llbracket 1,#1 \rrbracket}
\newcommand{\Rd}{\mathbb{R}^{d}}
\newcommand{\centereqn}[1]{\hspace{.5\linewidth}\hbox to 0pt{$\displaystyle\hss#1$\hss}}
\DeclareMathOperator*{\argmin}{arg\,min}
   \providecommand{\fg}{\ifdim\lastskip>\z@\unskip\fi~\frqq}%
\begin{document}

\title[Generalized Wasserstein barycenters]{Generalized Wasserstein barycenters between probability measures living on different subspaces}


\author{Julie Delon}
\address{Universit\'e de Paris, CNRS, MAP5 UMR 8145, F-75006 Paris, France \& Institut Universitaire de France (IUF)}
\email{julie.delon@u-paris.fr}

\author{Nathaël Gozlan \and Alexandre Saint-Dizier}
\address{Universit\'e de Paris, CNRS, MAP5 UMR 8145, F-75006 Paris, France}
\email{ nathael.gozlan@u-paris.fr, alexandre.saint-dizier@u-paris.fr}
\keywords{Optimal Transport, Wasserstein Distance, Wasserstein Barycenter, Multi-Marginal Optimal Transport}
\subjclass{60A10, 49J40, 49K21 and 49N15}
 
\date{\today}

\maketitle



\interfootnotelinepenalty=10000 

\begin{abstract}
In this paper, we introduce a generalization of the Wasserstein barycenter, to a case where the initial probability measures live on different subspaces of $\R^d$. We study the existence and uniqueness of this barycenter, we show how it is related to a larger multi-marginal optimal transport problem, and we propose a dual formulation. Finally, we explain how to compute numerically this generalized barycenter on discrete distributions, and we propose an explicit solution for  Gaussian distributions. 
\end{abstract}

\section{Introduction}

In recent years, optimal transport~\cite{villani08} has received a lot
of attention and has become an essential tool to compare or
interpolate between probability distributions. The apparition of
efficient numerical approaches has made optimal transport particularly
successful in numerous applied fields such as economy~\cite{galichon2018optimal}, image 
processing~\cite{delon2004midway, galerne2018texture,
  delon2020wasserstein}, computer vision~\cite{haker2004optimal},
astrophysics~\cite{frisch2002reconstruction, levy2020fast}, 
machine learning~\cite{arjovsky2017wasserstein,genevay2019entropy} and computer
graphics~\cite{levy:hal-01717967}, to name just
a few (for a recent monograph on numerical optimal transport, see~\cite{peyre19book}).

An important tool derived from optimal transport is the notion of Wasserstein barycenter introduced by Agueh and Carlier in \cite{agueh11} (see also \cite{Alvarez-Esteban:2015aa}). In the Euclidean case, the barycenter of
$x_1,\dots,x_p$ with weights $\lambda_1,\dots,\lambda_p$ (positive and
summing to 1) is the point $x$ of $\R^d$ which minimizes $\sum_{i=1}
^p\lambda_i |x-x_i|^2$, where  $|\,.\,|$ denotes the Euclidean norm on $\Rd$.  The Wasserstein barycenter is obtained in the same way in the space $\mathcal{P}_2(\R^d)$ of probability measures with second order moments, by replacing the Euclidean distance by the square Wasserstein distance $W_2$. 

In this paper, we propose a generalization of the notion of
Wasserstein barycenter, to a case where the considered probability
measures live on different subspaces of $\R^d$. Relying on the same
Euclidean analogy as above, for $p$ vectors  $x_i \in \R^{d_i}$ and
$p$ linear transformations $P_i:  \R^d \rightarrow \R^{d_i}$,
$i=1,\dots,p$, a generalized barycenter between these $x_i$ can be
defined as a minimizer in $\R^d$ of  $\sum_{i=1} ^p\lambda_i
|P_i(x)-x_i|^2$. A solution is given by $\hat{x}= (\sum_{i=1}^p\lambda_i P_i^T P_i)^{-1}(\sum \lambda_{i=1}^p P_i^T x_i)$ when the matrix
$\sum_{i=1}^p \lambda_i P_i^T P_i$ is full rank. Our generalized
Wasserstein barycenter is obtained by replacing the vectors $x_i$ by
$p$ probability measures $\nu_i$ on their respective subspace
$\R^{d_i}$ and the Euclidean distance by $W_2$. In other words, we 
study the minimization problem
\begin{equation}
\inf_{\gamma \in \mathcal{P}_2(\R^d)} \sum_{i=1}^p \lambda_i W_2^2(P_i\#\gamma,\nu_i),\label{eq:projection_consensus}
\end{equation}
where $P_i\#\gamma$ denotes the push-forward of $\gamma$ by $P_i$, {\em i.e.} the measure on $\R^{d_i}$ such that
  $\forall A \subset\R^{d_i} $, $(P_i\#\gamma)(A) =
  \gamma(P_i^{-1}(A))$. Figure~\ref{fig:belle_figure} illustrates this
  notion on an 3D example with four projections on different planes. 
 A solution $\gamma$
of~\eqref{eq:projection_consensus} realizes a  consensus between all
the $\nu_i$ for the Wasserstein distance $W_2$, through the
transformations $P_i$. 
Observe that in most cases, the problem will have an infinity of
solutions, since any measure $\mu$ such that $P_i\# \mu  = P_i \# \gamma$
for all $i$
will also be solution. For instance, if $\R^d = \R^{d_1}\times \dots
\times \R^{d_p}$ and the $P_i$ are canonical projections on the
subspaces $ \R^{d_i}$, any $\mu$
with marginals $\nu_1,\dots,\nu_p$ will be solution. 
This formulation generalizes the classical notion of Wasserstein
barycenter~\cite{agueh11}, obtained when all the $P_i$ are  equal to
$\mathrm{I}_d$, the
identity application on $\R^d$.

\begin{figure}
  \centering
  \includegraphics[width=0.5\textwidth]{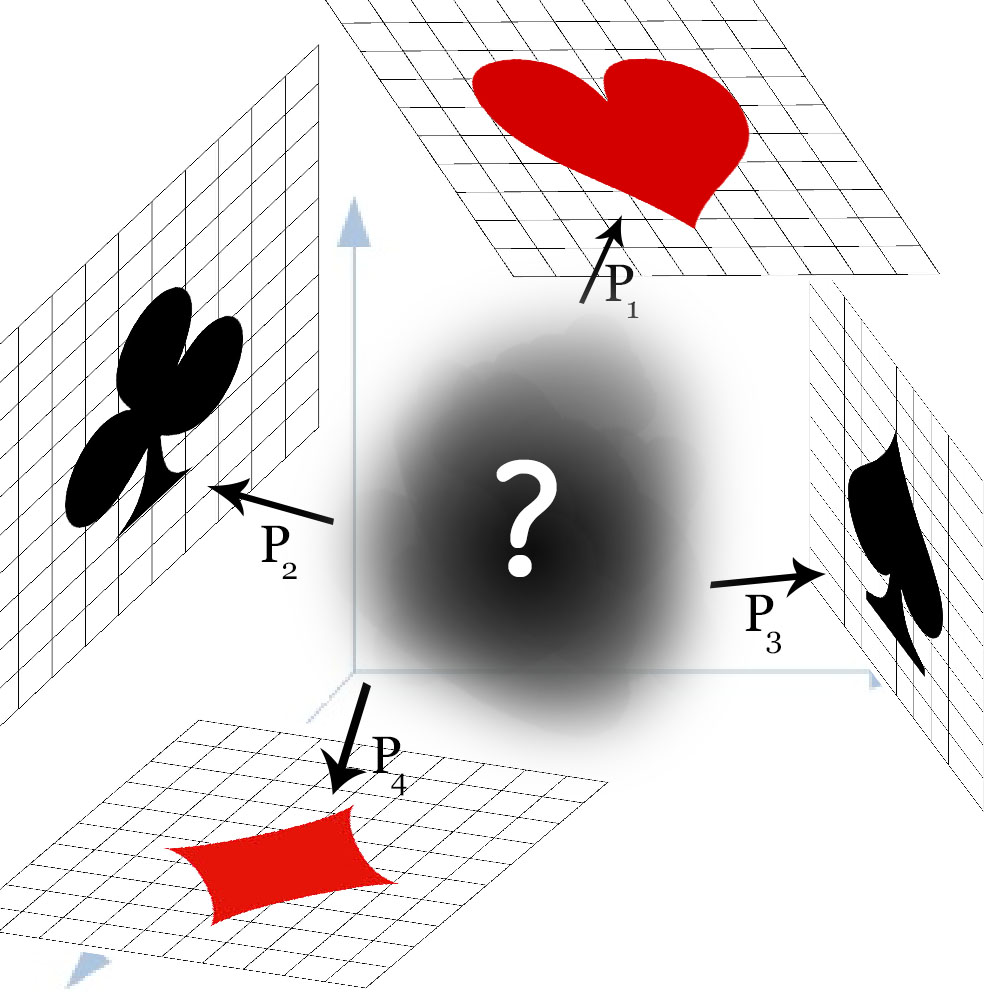}
  \caption{Generalized barycenter.  In this example, the
    transformations $P_i:\R^3 \rightarrow \R^2$ are linear projections
    from $\R^3$ on four different plans. Knowing the four probability
    measures $\nu_1,\dots,\nu_4$ on these four plans, we look for
the 3d probability measure $\gamma$ which realizes a consensus between
these four projections, in the sense described by Equation~\eqref{eq:projection_consensus}.}
  \label{fig:belle_figure}
\end{figure}

Gaspard Monge, one of the founding fathers of optimal transport, is
also the father of descriptive geometry~\cite{monge1798geometrie}, and one of the goals of this
discipline is to represent information about a three
dimensional volume  or surface from several well chosen two
dimensional projections. An obvious application of the generalized
barycenter is precisely the reconstruction of a
measure in $d$ dimensions from the knowledge of projections of this measure on
different subspaces. In practice, these
projections can be noisy or contain errors, and therefore do not
necessarily coincide on their common subspaces.

A concrete application where a distribution must be reconstructed from a set of marginals appears in image processing with patch-based aggregation~\cite{saint20}. Patches are small overlapping image pieces and it is usual to infer stochastic models (for example a Gaussian or GMM distribution) on these patches~\cite{delon2018gaussian}. Typically, each of these models is a distribution in $\R^9$  (for $3\times 3$ patches). The aggregation problem consists in reconstructing a distribution on the whole image (hence in $\R^d$ with $d$ very large) from the knowledge of all these overlapping (and generally not coinciding) models on patches.  

More generally, the reconstruction of a multidimensional distribution from a finite set of projections appears in many  applied fields,  for instance in medical or geophysical imaging~\cite{natterer2001mathematical}. In  these applications, a system measures multiple 1D or 2D projections of a 3D density and the objective is to reconstruct the 3D signal from these projections. The Fourier transform of these projections can be shown to be slices of the Fourier transform of the complete density. If the knowledge of all the slices is theoretically necessary to reconstruct this Fourier transform, in practical applications only a finite number of these slices is known, and they are interpolated to reconstruct the Fourier transform and recover the complete density by Fourier inversion.  

A particular instance of Problem~\eqref{eq:projection_consensus}, where
all but one the $P_i$ are one dimensional
projections, and the last one is equal to $\mathrm{I}_d$, has been studied
in~\cite{abraham17} and also considered in~\cite{benamou15}, precisely for tomographic reconstruction.  The authors derive
a dual formulation for the problem and prove existence and uniqueness of the solution. Several of these results rely on the  presence of a fully dimensional prior (one of the $P_i$ is invertible). We do
not make this assumption in this paper and study the problem in its full generality, without assumption on the linear applications $P_i$.  One of the obvious consequences is that in most cases, 
we lose uniqueness of the solution of~\eqref{eq:projection_consensus}, but we show that most properties are preserved.

The contributions of the paper are the following. After a short reminder on Wasserstein distances and Wasserstein barycenters in Section~\ref{sec:background}, we show in Section \ref{sec: existence unicity} the existence of solutions for the minimization problem~\eqref{eq:projection_consensus} and we explain how it is related to a multi-marginal optimal transport problem
in dimension $\sum_{i=1}^p d_i$. To this aim, we propose a convenient reformulation of the generalized barycenter problem as a classical Wasserstein barycenter problem between degenerate distributions $\tilde{\nu}_i$ which are obtained from the $\nu_i$.    We then propose in Section~\ref{sec: dual} a dual formulation for~\eqref{eq:projection_consensus} (which contains the of~\cite{abraham17} as a particular case). In Section~\ref{sec:gaussian_gwb}, we show that when the distributions $\nu_i$ are Gaussian, there is at least one Gaussian solution to~\eqref{eq:projection_consensus}, we study the uniqueness of this Gaussian solution, and how to reconstruct it in practice. Finally, we explain in Section \ref{sec: experiments} how to compute this generalized barycenter numerically and we provide  several numerical experiments to illustrate its behavior on discrete and Gaussian distributions.

\section{A short reminder on Wasserstein barycenters and multi-marginal optimal transport}

\label{sec:background}
Let $\mathcal{P}_2(\R^d)$ be the set of probability measures on $\R^d$
with finite second order moment.  For two probability measures $\nu_1, \nu_2$  in $\mathcal{P}_2(\R^d)$, the Wasserstein distance $W_2(\nu_1,\nu_2)$ is defined as  

\begin{equation}
  \label{eq:wasserstein_definition}
   W_2^2(\nu_1,\nu_2) :=  \inf_{X_1\sim \nu_1; X_2\sim \nu_2} \E \left(|X_2 - X_1|^2\right) =  \inf_{\pi \in \Pi(\nu_1,\nu_2)} \int_{\R^d\times\R^d} |x_2-x_1|^2\,d\pi(x_1,x_2),
 \end{equation}
 where  $\Pi(\nu_1,\nu_2)\subset \mathcal{P}_2(\R^d\times \R^d)$  is the subset of probability distributions $\pi$ on $\R^d\times \R^d$ with marginal distributions $\nu_1$ and $\nu_2$.

It is well known that there always exists a couple $(X_1,X_2)$ of random variables attaining the infimum in \eqref{eq:wasserstein_definition} (see e.g.~\cite[Chapter 4]{villani08}). This
 couple is named \textit{optimal coupling} and its distribution
 $\pi$ is called an \textit{optimal transport plan} between $\nu_1$ and $\nu_2$. This plan distributes all the mass of the distribution $\nu_1$ onto the distribution $\nu_2$ with a minimal cost, and the quantity $W_2^2(\nu_1,\nu_2)$ is the corresponding total cost. It is also well known that $W_2$ defines a metric on $\mathcal{P}_2(\R^d)$ (see e.g.~\cite[Chapter 6]{villani08}).

The barycenter of $x_1,\dots,x_p \in \R^d$, for positive weights $\lambda_1,\dots,\lambda_p$
summing to 1, is defined as 
\begin{equation}
B(x_1,\dots,x_p) = \sum_{i=1}^{p}\lambda_i x_i = \mathrm{argmin}_{y\in\R^d}
\sum_{i=1}^{p}\lambda_i | x_i -y|^2.
\label{eq:barycenter}
\end{equation}
Similarly, the Wasserstein barycenter of $p$ probability measures
$\nu_1,\dots, \nu_p$ for the positive weights $\lambda_1,\dots,\lambda_p$ is a solution of the minimization problem
 \begin{equation}
    \label{eq:W2_bary}
    \inf_{\nu} \sum_{i=1}^{p} \lambda_i W_2^2(\nu_i,\nu).
  \end{equation}
If all weights are equal to $\frac 1 p$, a Wasserstein barycenter is also a Fr\'echet mean for the Wasserstein distance $W_2$ (see~\cite{peyre19book}).

Agueh and Carlier have studied in depth the questions of existence and
uniqueness of barycenters for $W_2$ in~\cite{agueh11}. They show in particular that the solutions of this
barycenter problem are related to the solutions of another
optimization problem, called the multi-marginal transport problem~\cite{Gangbo1998}, which can be written

 \begin{equation} \label{eq:W2_multimarge_definition}
 \inf_{\pi\in\Pi(\nu_1,\nu_2,\dots,\nu_{p})} \int_{\R^d\times \dots \times \R^d}
      \sum_{i=1}^{p} | x_i-B(x_1,\ldots,x_p)|^2\,
      d\pi(x_1,x_2,\ldots,x_{p}), 
\end{equation}
where $\Pi(\nu_1,\nu_2,\dots,\nu_{p})$ is the set of probability
measures on $(\R^d)^{p}$ with $\nu_1,\nu_2,\dots,\nu_{p}$ as
marginals.  The following proposition summarizes some of the main contributions of \cite{agueh11}.

\begin{proposition}[Agueh-Carlier \cite{agueh11}]\ \label{prop:MM-AC}
\begin{enumerate}
\item Both Problems \eqref{eq:W2_bary} and \eqref{eq:W2_multimarge_definition} admit solutions. 
\item If at least one of the probability measures $\nu_i$ has a density with respect to Lebesgue, then Problem \eqref{eq:W2_bary} admits a unique solution. 
\item If $\pi^*$ is a solution of ~\eqref{eq:W2_multimarge_definition},
then $\nu^* = B\# \pi^*$ is a solution of~\eqref{eq:W2_bary}, and
the infimum of~\eqref{eq:W2_multimarge_definition} and~\eqref{eq:W2_bary} are equal. 
\end{enumerate}
\end{proposition}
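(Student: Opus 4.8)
The plan is to dispatch the existence statements (item (1) and the existence half of (3)) by the direct method of the calculus of variations, and to read off the identification in (3) and the uniqueness in (2) from the two elementary inequalities that tie \eqref{eq:W2_bary} to \eqref{eq:W2_multimarge_definition}.

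\textbf{Existence for the multi-marginal problem.} The set $\Pi(\nu_1,\dots,\nu_p)$ is non-empty (it contains $\nu_1\otimes\cdots\otimes\nu_p$), convex, and weakly closed; it is tight because each $\nu_i$ is, hence weakly sequentially compact by Prokhorov's theorem. The cost $c(x_1,\dots,x_p)=\sum_i\lambda_i|x_i-B(x_1,\dots,x_p)|^2$ is continuous, nonnegative, and dominated by $C\sum_i|x_i|^2$, so it is integrable against every $\pi\in\Pi(\nu_1,\dots,\nu_p)$ (the $\nu_i$ have finite second moments) and $\pi\mapsto\int c\,d\pi$ is weakly lower semicontinuous (truncate $c$ at level $M$ and let $M\to\infty$). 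The direct method then yields a minimizer $\pi^*$ of \eqref{eq:W2_multimarge_definition}.

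\textbf{The two inequalities.} For any $\nu\in\mathcal P_2(\R^d)$, pick optimal plans $\pi_i\in\Pi(\nu_i,\nu)$ and glue them along their common marginal $\nu$ into a measure $P$ on $(\R^d)^p\times\R^d$ with $(x_i,y)$-marginal $\pi_i$; its $(x_1,\dots,x_p)$-marginal $\pi$ lies in $\Pi(\nu_1,\dots,\nu_p)$, and since $y\mapsto\sum_i\lambda_i|x_i-y|^2$ is minimized exactly at $y=B(x_1,\dots,x_p)$,
\[
\sum_{i=1}^p\lambda_iW_2^2(\nu_i,\nu)=\int\sum_i\lambda_i|x_i-y|^2\,dP\ \ge\ \int c\,d\pi\ \ge\ \inf\eqref{eq:W2_multimarge_definition}.
\]
Conversely, for any $\pi\in\Pi(\nu_1,\dots,\nu_p)$ the map $(x_1,\dots,x_p)\mapsto(x_i,B(x_1,\dots,x_p))$ pushes $\pi$ to a coupling of $\nu_i$ and $\nu:=B\#\pi$, whence $\sum_i\lambda_iW_2^2(\nu_i,\nu)\le\int c\,d\pi$. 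Applied to the minimizer $\pi^*$ this gives $\sum_i\lambda_iW_2^2(\nu_i,B\#\pi^*)\le\inf\eqref{eq:W2_multimarge_definition}\le\inf\eqref{eq:W2_bary}$, so the two infima coincide and $B\#\pi^*$ solves \eqref{eq:W2_bary}; this proves (3) and the existence part of (1).

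\textbf{Uniqueness under a density assumption.} Assume $\nu_1$ has a density. First, any solution $\bar\nu$ of \eqref{eq:W2_bary} equals $B\#\pi$ for some optimal $\pi$: building $P$ as above from optimal $\pi_i\in\Pi(\nu_i,\bar\nu)$, the chain of inequalities is saturated, so for $\bar\nu$-a.e. $y$ the conditional law of $(x_1,\dots,x_p)$ charges only the set where $\sum_i\lambda_i|x_i-y|^2=\sum_i\lambda_i|x_i-B(x)|^2$; strict convexity of $y\mapsto\sum_i\lambda_i|x_i-y|^2$ forces $y=B(x)$ $P$-a.e., i.e. $\bar\nu=B\#\pi$ with $\pi$ optimal for \eqref{eq:W2_multimarge_definition}. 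Next, using $\sum\lambda_i=1$ one has $c(x)=\sum_i\lambda_i|x_i|^2-|B(x)|^2$, so up to a constant depending only on the $\nu_i$ the cost equals [terms each depending on a single $x_j$] $-\,2\lambda_1\langle x_1,w\rangle$ $+$ [terms not involving $x_1$], where $w:=\sum_{j\ge2}\lambda_jx_j$. Hence the $c$-cyclical monotonicity of $\supp\pi$ (valid because $c$ is continuous and $\pi$-integrable) collapses, after rearranging only the first coordinates, to plain cyclical monotonicity of the partial relation $\{(x_1,w):(x_1,\dots,x_p)\in\supp\pi\}\subset\R^d\times\R^d$. By Rockafellar's theorem this relation is contained in the subdifferential of a convex function $\phi$; since $\phi$ is differentiable Lebesgue-a.e. on the interior of its domain and $\nu_1$ has a density, $w=\nabla\phi(x_1)$ for $\pi$-a.e. $x$, so $B(x)=\lambda_1x_1+\nabla\phi(x_1)=:T(x_1)$ and $\bar\nu=T\#\nu_1$. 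Finally, if $\pi$ and $\pi'$ are two optimal plans, then $\tfrac12(\pi+\pi')$ is optimal as well, and running the previous argument for $\tfrac12(\pi+\pi')$ shows the corresponding maps coincide $\nu_1$-a.e.; therefore $B\#\pi=B\#\pi'$, and the barycenter is unique.

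\textbf{Main obstacle.} The routine parts are existence (compactness plus lower semicontinuity) and the two inequalities, which only use the pointwise optimality of $B$ and a gluing/push-forward. The delicate part is uniqueness: one must justify that $\supp\pi$ is $c$-cyclically monotone and that, \emph{for this particular cost}, this reduces to monotonicity of the partial relation $x_1\mapsto w$, and then invoke Rockafellar's theorem together with a.e.\ differentiability of convex functions to upgrade that relation to a genuine map defined $\nu_1$-a.e. Some care with disintegration and measurability is needed in the gluing constructions, and with the finiteness of $\int c\,d\pi$ so that the $c$-monotonicity argument is available.
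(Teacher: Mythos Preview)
The paper does not prove this proposition: it is stated as a summary of results from \cite{agueh11} and left without proof. So there is no ``paper's own proof'' to compare against; what matters is whether your argument is sound on its own terms.

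Your proof is correct. Existence for the multi-marginal problem is the standard direct method, and your two inequalities (gluing of optimal $\pi_i$'s along $\nu$, and push-forward along $(x_i,B)$) are exactly the classical ones; they immediately give item (3) and existence in (1). For uniqueness, your route via $c$-cyclical monotonicity is legitimate: the key observation, which you state correctly, is that $\pi$ is also optimal for the \emph{two}-marginal problem between $\nu_1$ and the $(x_2,\dots,x_p)$-marginal of $\pi$, so its support is $c$-cyclically monotone with respect to permutations of the first coordinate only; the separable structure of $c(x)=\sum_i\lambda_i|x_i|^2-|B(x)|^2$ then reduces this to ordinary cyclical monotonicity of $\{(x_1,\sum_{j\ge2}\lambda_jx_j)\}$, and Rockafellar plus a.e.\ differentiability (using that $\nu_1$ has a density) finish the job. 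The averaging trick $(\pi+\pi')/2$ to force a single convex potential is the right way to close the argument.

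One remark on alternatives: later in the paper (proof of Proposition~3.2 and Proposition~\ref{prop:uniquenessproj}) the authors invoke the strict convexity of $\mu\mapsto W_2^2(\nu,\mu)$ when $\nu$ has a density (\cite[Corollary~2.10]{ABCM11}). That gives a shorter uniqueness proof for item (2) that bypasses the multi-marginal machinery entirely: if $\bar\nu_1\neq\bar\nu_2$ were two barycenters, $\tfrac12\bar\nu_1+\tfrac12\bar\nu_2$ would beat both. Your approach yields more---it identifies the barycenter as $T\#\nu_1$ for an explicit map $T$---at the price of the Rockafellar/differentiability step.
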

Other contributions of \cite{agueh11}, in particular concerning the dual problem of~\eqref{eq:W2_bary}, will be recalled below.

 \section{Generalized Wasserstein barycenters}
 \label{sec: existence unicity}
 
In the Wasserstein barycenter problem~\eqref{eq:W2_bary}, the measures
$\nu_1,\dots,\nu_p$ are seen as living in the same space $\R^d$. The generalized Wasserstein
barycenters can be seen as a variant of this problem, where
the spaces in which the $\nu_i$ live are subspaces of a larger common space.

 \begin{definition}[GWB]
 Given $p$ positive integers $d_1,\dots,d_p$, $p$  probability
 measures $(\nu_1,\ldots,\nu_p)\in\PP_2(\R^{d_1})\times\dots\times
 \PP_2(\R^{d_p})$, a vector $\lambda = (\lambda_1,\ldots,\lambda_p)$
 of positive weights summing to 1 and $p$ linear
 applications  
 $P_i:\R^d\rightarrow \R^{d_i}$, define the functional
\begin{equation}\label{eq:F}
\mathcal{F}(\gamma):=\sum_{i=1}^p \lambda_iW_2^2(\nu_i,P_i\#\gamma), \qquad \gamma \in \mathcal{P}_2(\R^d).
\end{equation}
A solution $\gamma^*\in \mathcal{P}_2(\R^d)$
 of the minimization problem
 \begin{equation}
 \inf_{\gamma \in \mathcal{P}_2(\R^d)} \mathcal{F}(\gamma)
 \tag{GWB}
\label{ref:GWB}
\end{equation}
is called a generalized Wasserstein barycenter of the marginals $\nu_i$ for
the applications $P_i$.
 \end{definition}
Note that this formulation includes the classical notion of Wasserstein barycenter, by taking $d_i=d$ and $P_i=\mathrm{I}_d$, for all $i\in \llbracket 1,p \rrbracket$. Observe also that \eqref{ref:GWB} is a convex minimization problem. In the numerical experiments, we will always assume that for all $i\in \llbracket 1,p \rrbracket$, $d_i \leq d$ and $P_i$ is surjective. 

In Section \ref{sec:reformulation} below, we will relate \eqref{ref:GWB} to the classical Wasserstein barycenter problem. This will in particular enable us to show existence of solutions for \eqref{ref:GWB}.

\subsection{A technical reduction}\label{sec:technical}
In all what follows, we will denote by $A$ the $d\times d$ symmetric semidefinite matrix defined by
\[
A = \sum_{i=1}^p \lambda_i P_i^TP_i
\]
(here, and in all the paper, we make no difference between linear maps and their matrices in the standard basis of $\R^d$ and $\R^{d_i}$).

As we will see, it will be very convenient to assume that the matrix $A$ is invertible (or equivalently positive definite). In the sequel, we explain that this is not a real restriction and that it is always possible to reduce to this situation.

\begin{lemma}
The matrix $A$ is positive definite if and only if 
\begin{equation}\label{eq:matrixA}
\bigcap_{i=1}^p \mathrm{Ker}(P_i)= \{0\}.
\end{equation}
\end{lemma}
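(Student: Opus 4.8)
The plan is to exploit that $A$ is precisely the matrix of the quadratic form $x \mapsto \sum_{i=1}^p \lambda_i |P_i x|^2$, so that its positive definiteness can be read off directly from the vanishing locus of this form.

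First I would record that $A$ is symmetric positive semidefinite: for every $x \in \R^d$,
\[
x^T A x = \sum_{i=1}^p \lambda_i\, x^T P_i^T P_i x = \sum_{i=1}^p \lambda_i |P_i x|^2 \geq 0,
\]
since each $\lambda_i > 0$. Consequently $A$ is positive definite if and only if the only vector $x$ with $x^T A x = 0$ is $x = 0$.

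Next I would identify the kernel of this quadratic form. Because all the weights $\lambda_i$ are strictly positive and each term $|P_i x|^2$ is nonnegative, the sum $\sum_{i=1}^p \lambda_i |P_i x|^2$ vanishes if and only if $P_i x = 0$ for every $i \in \llbracket 1,p\rrbracket$, i.e. if and only if $x \in \bigcap_{i=1}^p \mathrm{Ker}(P_i)$. Chaining the two observations gives: $A$ positive definite $\iff$ $\big(x^T A x = 0 \Rightarrow x = 0\big)$ $\iff$ $\big(x \in \bigcap_{i=1}^p \mathrm{Ker}(P_i) \Rightarrow x = 0\big)$ $\iff$ $\bigcap_{i=1}^p \mathrm{Ker}(P_i) = \{0\}$, which is the claim.

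I do not expect any genuine obstacle here; the argument is elementary linear algebra. The only place the hypotheses are actually used is the step passing from the vanishing of the weighted sum to the simultaneous vanishing of each $P_i x$, which relies crucially on $\lambda_i > 0$ for all $i$ — this is where the equivalence would fail if some weight were allowed to be zero.
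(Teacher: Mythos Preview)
Your argument is correct and essentially identical to the paper's own proof: both compute $x^T A x = \sum_{i=1}^p \lambda_i |P_i x|^2$ and observe that this nonnegative sum vanishes exactly when $x$ lies in every $\mathrm{Ker}(P_i)$. Your remark that strict positivity of the $\lambda_i$ is the only place the hypotheses enter is also accurate.
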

\proof
Observe that, for all $x \in \R^d$,
\[
x^TAx =  \sum_{i=1}^p \lambda_ix^T P_i^TP_ix =  \sum_{i=1}^p \lambda_i |P_ix|^2
\]
and so $x^TAx\geq0$ with equality if and only if $x\in \bigcap_{i=1}^p \mathrm{Ker}(P_i)$.
\endproof

\begin{lemma}
Denote by $K = \bigcap_{i=1}^p \mathrm{Ker}(P_i)$ and by $P_{K^\perp}$ the orthogonal projection on $K^\perp$. 
A probability measure $\nu \in \mathcal{P}_2(\R^d)$ is solution to \eqref{ref:GWB} if and only if $P_{K^\perp}\# \nu$ is solution to \eqref{ref:GWB}.
\end{lemma}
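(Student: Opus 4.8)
The plan is to show that the functional $\mathcal{F}$ of \eqref{eq:F} is left unchanged when its argument $\nu$ is replaced by $P_{K^\perp}\#\nu$; the claimed equivalence then follows at once from the definition of a minimizer.

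First I would record the elementary linear-algebraic fact that $P_i\circ P_{K^\perp}=P_i$ for every $i\in\llbracket 1,p\rrbracket$. Indeed, decompose an arbitrary $x\in\R^d$ as $x=x_K+x_{K^\perp}$ with $x_K\in K$ and $x_{K^\perp}=P_{K^\perp}x\in K^\perp$; since $K=\bigcap_{j=1}^p\mathrm{Ker}(P_j)\subseteq\mathrm{Ker}(P_i)$, we have $P_ix=P_ix_K+P_ix_{K^\perp}=P_i(P_{K^\perp}x)$, which is the announced identity.

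Next I would observe that $P_{K^\perp}$ is $1$-Lipschitz (it is an orthogonal projection, so $|P_{K^\perp}x|\le|x|$), hence $P_{K^\perp}\#\nu$ belongs to $\mathcal{P}_2(\R^d)$ whenever $\nu$ does, so it is an admissible competitor in \eqref{ref:GWB}. Combining the previous step with the composition rule for push-forwards, $P_i\#\bigl(P_{K^\perp}\#\nu\bigr)=(P_i\circ P_{K^\perp})\#\nu=P_i\#\nu$ for every $i$. Substituting this into \eqref{eq:F} yields $\mathcal{F}\bigl(P_{K^\perp}\#\nu\bigr)=\mathcal{F}(\nu)$ for all $\nu\in\mathcal{P}_2(\R^d)$.

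Finally I would conclude: if $\nu$ is a solution of \eqref{ref:GWB}, then $\mathcal{F}(P_{K^\perp}\#\nu)=\mathcal{F}(\nu)=\inf_{\mathcal{P}_2(\R^d)}\mathcal{F}$, so $P_{K^\perp}\#\nu$ is a solution as well; conversely, if $P_{K^\perp}\#\nu$ is a solution, the same equality $\mathcal{F}(\nu)=\mathcal{F}(P_{K^\perp}\#\nu)$ shows that $\nu$ is one. There is no real obstacle in this argument; the only point requiring a word of care is checking that $P_{K^\perp}\#\nu$ remains in $\mathcal{P}_2(\R^d)$, which is exactly the $1$-Lipschitz remark above. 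Together with the preceding lemma, this shows that, for the study of \eqref{ref:GWB}, one may assume without loss of generality that $\bigcap_{i=1}^p\mathrm{Ker}(P_i)=\{0\}$, equivalently that $A$ is positive definite.
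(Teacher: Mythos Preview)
Your argument is correct and follows essentially the same approach as the paper: the paper's proof records the identity $P_iX=P_iP_{K^\perp}X$ for $X\sim\nu$ (your $P_i\circ P_{K^\perp}=P_i$ in push-forward language) and deduces that $P_iX$ and $P_iP_{K^\perp}X$ have the same law, which is exactly your $P_i\#(P_{K^\perp}\#\nu)=P_i\#\nu$. You supply slightly more detail (the orthogonal decomposition and the $\mathcal{P}_2$ membership check), but the core idea is identical.
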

\proof
This comes from the fact, if $X$ has law $\nu$, then for all $i \in \llbracket 1,p \rrbracket$,
\[
P_iX = P_iP_{K^\perp}X,
\]
and so $P_iX$ and $P_iP_{K^\perp}X$ have the same law.
\endproof

Thanks to the preceding lemma, we see that solutions of \eqref{ref:GWB} are the laws of random vectors $X$ of the form $X=X_1+X_2$, where $X_1$ takes values in $K^\perp$, $X_2$ in $K$ and the law $\nu_1$ of $X_1$ is solution of \eqref{ref:GWB}. We have thus reduced the problem to determine solutions of \eqref{ref:GWB} supported on $K^\perp$. For that purpose, let $\bar{d}$ be the dimension of $K^\perp$ and let $e_1,\ldots, e_{\bar{d}}$ be a basis of $K^\perp$. For all $i \in \llbracket 1,p \rrbracket$, define 
\[
\bar{P}_i = P_iQ,
\]
where $Q$ is the $d \times \bar{d}$ matrix of $e_1,\ldots, e_{\bar{d}}$ in the standard basis of $\R^d$.
If $\bar{\gamma}^*$ is a minimizer of the functional $\bar{\mathcal{F}}$ defined by
\begin{equation}\label{eq:barF}
\bar{\mathcal{F}}:=\sum_{i=1}^p
 \lambda_iW_2^2(\nu_i,\bar{P}_i\#\bar{\gamma}),\qquad \bar{\gamma} \in \mathcal{P}_2(\R^{\bar{d}}),
\end{equation}
then $Q\#\bar{\gamma}^*$ is a solution of \eqref{ref:GWB} supported on $K^\perp$. Finally, observe that 
\begin{equation}\label{eq:barA}
\bar{A} = \sum_{i=1}^p \lambda_i \bar{P}_i^T\bar{P}_i
\end{equation}
is now invertible.

\subsection{Reformulation of the generalized Wasserstein barycenter problem}
\label{sec:reformulation}
In this section, we assume that the matrix $A$ defined in \eqref{eq:matrixA} is invertible. The goal of this section is to relate the generalized Wasserstein barycenter problem to the usual one. 

Recall the definition of the functional $\mathcal{F}$ given in \eqref{eq:F} and let us introduce another functional $\mathcal{G} : \mathcal{P}_2(\R^d) \to \R_+$ defined as follows
\begin{equation}
\label{eq:G}
\mathcal{G}(\mu) = \sum_{i=1}^p \lambda_i W_2^2(\tilde{\nu}_i, \mu),\qquad \mu \in  \mathcal{P}_2(\R^d),
\end{equation}
where, for all $i\in \llbracket 1,p \rrbracket$,
\[
\tilde{\nu}_i = (A^{-1/2}P_i^T)\# \nu_i.
\]
Note that the matrix $A$ being symmetric and definite positive, it admits a unique square root denoted by $A^{1/2}$ which is itself invertible. 

The following proposition will be used extensively in all the paper.
\begin{proposition}\label{prop:lien}
A probability measure $\gamma^* \in \mathcal{P}_2(\R^d)$ minimizes $\mathcal{F}$ if and only if  $\mu^* = A^{1/2}\#\gamma^*$ minimizes $\mathcal{G}$.
\end{proposition}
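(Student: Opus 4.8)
The plan is to show that the two functionals $\mathcal{F}$ and $\mathcal{G}$ differ only by a change of variable through the invertible linear map $A^{1/2}$, so that minimizing one is equivalent to minimizing the other. Concretely, I would fix $\gamma \in \mathcal{P}_2(\R^d)$, set $\mu = A^{1/2}\#\gamma$, and prove that $\mathcal{F}(\gamma) = \mathcal{G}(\mu)$. Since $A^{1/2}$ is a bijection of $\R^d$ onto itself, $\gamma \mapsto A^{1/2}\#\gamma$ is a bijection of $\mathcal{P}_2(\R^d)$ onto itself, and the claimed equivalence of minimizers follows immediately from the pointwise identity $\mathcal{F}(\gamma) = \mathcal{G}(A^{1/2}\#\gamma)$.

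The heart of the matter is therefore the term-by-term identity
\[
W_2^2(\nu_i, P_i\#\gamma) = W_2^2\big(\tilde\nu_i, A^{1/2}\#\gamma\big), \qquad i \in \llbracket 1,p \rrbracket,
\]
with $\tilde\nu_i = (A^{-1/2}P_i^T)\#\nu_i$. I would prove this by exhibiting a cost-preserving bijection between the relevant sets of couplings. First observe that $P_i = P_i A^{-1/2} A^{1/2} = (A^{-1/2}P_i^T)^T A^{1/2}$, so if $X \sim \gamma$ then $P_i X = B_i^T (A^{1/2} X)$ where $B_i := A^{-1/2} P_i^T$ is the $d \times d_i$ matrix appearing in the definition of $\tilde\nu_i$. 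Now take an optimal coupling $(Y_i, Z)$ of $\nu_i$ and $A^{1/2}\#\gamma$; then $(Y_i, B_i^T Z)$ is a coupling of $\nu_i$ and $P_i\#\gamma$ with cost $\E|Y_i - B_i^T Z|^2$. Conversely, starting from a coupling of $\nu_i$ and $P_i\#\gamma$... here one must be a bit careful, since lifting a coupling on $\R^{d_i}\times\R^{d_i}$ back to one on $\R^{d_i}\times\R^d$ requires a disintegration argument. A cleaner route is to work directly with the Monge–Kantorovich cost expressed via the pushforward $\tilde\nu_i$: since $\tilde\nu_i = B_i\#\nu_i$, for any coupling $\pi$ of $\nu_i$ and $A^{1/2}\#\gamma$ one has, writing the cost in terms of $(y, z)$ and using $|y - B_i^T z|^2$ as a new cost function on $\R^{d_i}\times\R^d$... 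Actually the most transparent approach: apply the general fact that for a probability measure $\alpha$ on $\R^{d_i}$, a measure $\beta$ on $\R^d$, and a linear map $L:\R^d \to \R^{d_i}$,
\[
W_2^2(\alpha, L\#\beta) \ \text{relates to}\ \inf\Big\{\int |y - Lz|^2\, d\pi(y,z) : \pi \in \Pi(\alpha, \beta)\Big\}
\]
are in general \emph{not} equal, so this naive identity is false; the correct statement needs the specific structure $L = B_i^T$ together with the defining property of $A$. The key algebraic input is that $\sum_i \lambda_i B_i B_i^T = A^{-1/2}\big(\sum_i \lambda_i P_i^T P_i\big)A^{-1/2} = A^{-1/2} A A^{-1/2} = \mathrm{I}_d$, i.e.\ the matrices $\sqrt{\lambda_i}\, B_i$ form the blocks of a matrix whose rows are orthonormal. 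This is exactly the relation that makes the reformulation work, and it is essentially the statement that $(\nu_i, P_i)_i$ and $(\tilde\nu_i, \mathrm{I}_d)_i$ encode the same barycenter problem.

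The cleanest rigorous argument, which I would carry out in detail, is the following. For each $i$, and any $\gamma$, I claim $W_2^2(\nu_i, P_i\#\gamma) = W_2^2(\tilde\nu_i, A^{1/2}\#\gamma)$ by establishing both inequalities. For "$\leq$": take an optimal plan $\sigma_i \in \Pi(\tilde\nu_i, A^{1/2}\#\gamma)$, recall $\tilde\nu_i = B_i\#\nu_i$, lift $\sigma_i$ to a plan $\hat\sigma_i \in \Pi(\nu_i, A^{1/2}\#\gamma)$ with $(B_i \times \mathrm{id})\#\hat\sigma_i = \sigma_i$ using a measurable selection/disintegration of $\sigma_i$ with respect to its first marginal (gluing lemma), then push through $\mathrm{id}\times A^{-1/2}$ composed with $\mathrm{id}\times (A^{1/2}$ then project)... more simply, map $(y,z)\mapsto (y, B_i^T z)$: this sends $\hat\sigma_i$ to a plan in $\Pi(\nu_i, P_i\#\gamma)$ since $B_i^T A^{1/2} = P_i$, with cost $\int |y - B_i^T z|^2\,d\hat\sigma_i(y,z) \le \int |B_i y - z|^2\,d\hat\sigma_i = \int|y' - z|^2 d\sigma_i$, where the inequality uses $|y - B_i^T z|^2 \le |B_i y - z|^2$... which holds because $\|B_i\|_{\mathrm{op}} \le 1$ (a consequence of $\sum_j \lambda_j B_j B_j^T = \mathrm{I}_d$, so $\lambda_i B_i B_i^T \preceq \mathrm{I}_d$, hence $B_i B_i^T \preceq \lambda_i^{-1}\mathrm{I}_d$)... wait, that gives the wrong direction and the wrong constant. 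I will instead verify the identity $|y - B_i^T z|^2 = |B_i y - z|^2 + (\text{terms involving } (\mathrm{I}-B_iB_i^T))$ and choose the lift to kill the extra terms — this is precisely where the projection structure enters. I expect \textbf{this coupling-transfer step} to be the main obstacle: matching the quadratic costs requires using that $B_i^T$ is, up to the scaling, a partial isometry, and the reverse inequality "$\geq$" needs the symmetric construction pushing an optimal plan for $W_2^2(\nu_i, P_i\#\gamma)$ forward through $(y, w) \mapsto (B_i y, \text{something})$. Once the term-by-term identity is in hand, summing against $\lambda_i$ gives $\mathcal{F}(\gamma) = \mathcal{G}(A^{1/2}\#\gamma)$ for every $\gamma$, and the proposition follows.
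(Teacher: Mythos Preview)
Your plan has a genuine gap: the term-by-term identity $W_2^2(\nu_i, P_i\#\gamma) = W_2^2(\tilde\nu_i, A^{1/2}\#\gamma)$ that you set out to prove is \emph{false} in general. A one-line counterexample: take $d=2$, $p=2$, $P_1=[1\ 0]$, $P_2=[0\ 1]$, $\lambda_1=\lambda_2=1/2$, so $A=\tfrac12 \mathrm{I}_2$. With $\nu_1=\delta_0$ and $\gamma=\delta_{(a,b)}$ one gets $W_2^2(\nu_1,P_1\#\gamma)=a^2$, whereas $\tilde\nu_1=\delta_{(0,0)}$ and $A^{1/2}\#\gamma=\delta_{(a/\sqrt2,\,b/\sqrt2)}$ give $W_2^2(\tilde\nu_1,A^{1/2}\#\gamma)=(a^2+b^2)/2$. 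These agree only accidentally. This is why your coupling-transfer step kept producing the ``wrong direction and wrong constant'': no amount of clever lifting will recover an equality that does not hold. Relatedly, your target identity $\mathcal{F}(\gamma)=\mathcal{G}(A^{1/2}\#\gamma)$ is also incorrect; the right statement is $\mathcal{F}(\gamma)=C+\mathcal{G}(A^{1/2}\#\gamma)$ for a constant $C$ depending only on the $\nu_i$.

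The paper's proof proceeds differently and avoids this trap. For each $i$ it expands $W_2^2(\nu_i,P_i\#\gamma)$ as second moments minus twice the maximal correlation $\sup_{\pi\in\Pi(\nu_i,P_i\#\gamma)}\int x\cdot z\,d\pi$, then uses a disintegration lemma (the ``gluing'' you alluded to) to rewrite this sup successively over $\Pi(\nu_i,\gamma)$ and over $\Pi(\tilde\nu_i,\mu)$. This yields
\[
W_2^2(\nu_i,P_i\#\gamma)=W_2^2(\tilde\nu_i,\mu)+\int\!\big(|x|^2-x^TP_iA^{-1}P_i^Tx\big)\,d\nu_i+\int\!\big(y^TA^{-1/2}P_i^TP_iA^{-1/2}y-|y|^2\big)\,d\mu,
\]
where the last integral \emph{does} depend on $\mu$. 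Only after multiplying by $\lambda_i$ and summing does the identity $\sum_i\lambda_i A^{-1/2}P_i^TP_iA^{-1/2}=\mathrm{I}_d$ (which you correctly identified as the key algebraic input) kill the $\mu$-dependent extras, leaving $\mathcal{F}(\gamma)=C+\mathcal{G}(\mu)$. So the structural observation you made is exactly right, but it acts at the level of the \emph{sum} $\mathcal{F}$, not of each summand; rework your argument accordingly.
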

In other words, $\gamma^*$ is a generalized Wasserstein barycenter of the $\nu_i$ for the applications $P_i$ and weights $\lambda_i$ if and only if $\gamma^* = A^{-1/2}\# \mu^*$, where $\mu^*$ is the Wasserstein barycenter of the measures $\tilde{\nu}_i$ with weights $\lambda_i$. Proposition \ref{prop:lien} will enable us to extend to the case of the generalized Wasserstein barycenter problem \eqref{ref:GWB} many properties which are known in the classical case. We will in particular use Proposition \ref{prop:lien} to obtain a dual formulation of \eqref{ref:GWB} (see Section \ref{sec: dual}) and to study the case where the $\nu_i$ are Gaussian (see Section \ref{sec:gaussian_gwb}).

\proof Let $\gamma \in \mathcal{P}_2(\R^d)$ ; setting $\mu = A^{1/2}\#\gamma$, it follows from Lemma \ref{lem:couplage} below that
\begin{align*}
W_2^2(\nu_i, P_i\# \gamma)&= \int |x|^2 \,d\nu_i(x) + \int y^TP_i^TP_iy\, d\gamma(y) - 2\sup_{\tilde{\pi} \in \Pi(\nu_i, P_i\# \gamma)} \int x\cdot z \,d\tilde{\pi}(x,z)\\
& = \int |x|^2 d\nu_i(x) + \int y^TP_i^TP_iy\, d\gamma(y) - 2\sup_{\pi \in \Pi(\nu_i,\gamma)} \int x\cdot P_iy \,d\pi(x,y)\\
&  = \int |x|^2 \,d\nu_i(x) + \int y^TP_i^TP_iy\, d\gamma(y) - 2\sup_{\pi \in \Pi(\nu_i,\gamma)} \int P_i^Tx\cdot y \,d\pi(x,y)\\
&  = \int |x|^2 \,d\nu_i(x) + \int y^T A^{-1/2}P_i^TP_iA^{-1/2}y\, d\mu(y) - 2\sup_{\pi \in \Pi(\nu_i,\gamma)} \int A^{-1/2}P_i^Tx\cdot A^{1/2}y \,d\pi(x,y)\\
&  = \int |x|^2 \,d\nu_i(x) + \int y^T A^{-1/2}P_i^TP_iA^{-1/2}y\, d\mu(y) - 2\sup_{\pi \in \Pi(\tilde{\nu}_i,\mu)} \int x\cdot y \,d\pi(x,y)\\
& =  W_2^2(\tilde{\nu}_i, \mu) + \int |x|^2- x^TP_iA^{-1}P_i^Tx \,d\nu_i(x) + \int y^T A^{-1/2}P_i^TP_iA^{-1/2}y- |y|^2\, d\mu(y).
\end{align*}

Therefore, summing these equalities and using \eqref{eq:matrixA} gives
\[
\mathcal{F}(\gamma) = C + \mathcal{G}(\mu),
\]
with $C =  \sum_{i=1}^p \lambda_i \int |x|^2- x^TP_iA^{-1}P_i^Tx \,d\nu_i(x).$
\endproof
In the proof of Proposition \ref{prop:lien} we have used the following result.

\begin{lemma}\label{lem:couplage}
Let $\gamma \in \mathcal{P}_2(\R^d)$, $\nu \in \mathcal{P}_2(\R^{d'})$ and $T: \R^d \to \R^{d'}$ be a measurable map. Then
$\pi' \in \Pi(\nu, T\#\gamma)$ if and only if there is some $\pi \in \Pi(\nu,\gamma)$ such that $\pi' = (\mathrm{I}_d,T)\#\pi$.\\
In particular, if there exist $a,b\geq0$ such that $|T(x)| \leq a+b|x|$ for all $x \in \R^d$, then
\[
\sup_{\pi \in \Pi(\nu,\gamma)} \int x\cdot T(y) \,d\pi(x,y) = \sup_{\pi \in \Pi(\nu,T\#\gamma)} \int x\cdot z \,d\pi(x,z).
\]
\end{lemma}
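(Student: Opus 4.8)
The plan is to prove the two assertions in turn. The first is a characterization of $\Pi(\nu,T\#\gamma)$; it splits into an easy implication (manufacturing $\pi'$ out of $\pi$) and a \emph{lifting} implication (manufacturing $\pi$ out of $\pi'$), and only the latter is nontrivial. The second assertion, the equality of the two suprema, then follows formally once one notes that the correspondence $\pi\mapsto(\mathrm{I}_{d'},T)\#\pi$ transports the relevant integrals; the growth hypothesis on $T$ is used only to ensure that $T\#\gamma\in\mathcal{P}_2(\R^{d'})$ and that all the integrals under consideration are absolutely convergent.

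For the easy implication, write $S:\R^{d'}\times\R^d\to\R^{d'}\times\R^{d'}$, $(x,y)\mapsto(x,T(y))$, and fix $\pi\in\Pi(\nu,\gamma)$. Then $\pi':=S\#\pi$ has first marginal $\nu$, because the first coordinate projection satisfies $p_1\circ S=p_1$, and second marginal $T\#\gamma$, because $p_2\circ S=T\circ\tilde p_2$ with $\tilde p_2$ the projection onto $\R^d$. When $T\#\gamma\in\mathcal{P}_2(\R^{d'})$ (in particular under the growth assumption), both marginals of $\pi'$ lie in $\mathcal{P}_2$, hence so does $\pi'$, and $\pi'\in\Pi(\nu,T\#\gamma)$.

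For the lifting implication I would invoke the gluing lemma (see e.g.\ Villani, \emph{Optimal Transport: Old and New}, Chapter~1). Given $\pi'\in\Pi(\nu,T\#\gamma)$, consider also the deterministic coupling $(T,\mathrm{I}_d)\#\gamma\in\Pi(T\#\gamma,\gamma)$, which is carried by the graph $\{(z,y):z=T(y)\}$. Gluing $\pi'$ and $(T,\mathrm{I}_d)\#\gamma$ along their common marginal $T\#\gamma$ (legitimate since $\R^{d'}$ and $\R^d$ are Polish) yields $\sigma\in\mathcal{P}(\R^{d'}\times\R^{d'}\times\R^d)$ whose $(x,z)$-marginal is $\pi'$ and whose $(z,y)$-marginal is $(T,\mathrm{I}_d)\#\gamma$; in particular $z=T(y)$ holds $\sigma$-almost surely. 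Let $\pi$ be the $(x,y)$-marginal of $\sigma$. Then $\pi\in\Pi(\nu,\gamma)$, and for every bounded continuous $\phi$ on $\R^{d'}\times\R^{d'}$,
\[
\int\phi(x,T(y))\,d\pi(x,y)=\int\phi(x,T(y))\,d\sigma(x,z,y)=\int\phi(x,z)\,d\sigma(x,z,y)=\int\phi(x,z)\,d\pi'(x,z),
\]
the middle equality using $z=T(y)$ $\sigma$-a.s.; hence $S\#\pi=\pi'$. (Alternatively one can disintegrate $\pi'$ and $\gamma$ over $T\#\gamma$ and recombine the conditional measures, those of $\gamma$ being concentrated on the level sets $T^{-1}(z)$.)

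Finally, assume $|T(x)|\leq a+b|x|$. For $\pi\in\Pi(\nu,\gamma)$ one has $|x\cdot T(y)|\leq a|x|+\tfrac b2(|x|^2+|y|^2)$, which is $\pi$-integrable since $\nu,\gamma\in\mathcal{P}_2$; likewise $(x,z)\mapsto x\cdot z$ is $\pi'$-integrable for $\pi'\in\Pi(\nu,T\#\gamma)$ because $\int|z|^2\,d(T\#\gamma)=\int|T(y)|^2\,d\gamma<\infty$. The change-of-variables formula for $S$ applied to the function $(x,z)\mapsto x\cdot z$ gives $\int x\cdot T(y)\,d\pi=\int x\cdot z\,d(S\#\pi)$ for every $\pi\in\Pi(\nu,\gamma)$. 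Since, by the first part, $\pi\mapsto S\#\pi$ is a bijection from $\Pi(\nu,\gamma)$ onto $\Pi(\nu,T\#\gamma)$, the sets of real numbers $\{\int x\cdot T(y)\,d\pi:\pi\in\Pi(\nu,\gamma)\}$ and $\{\int x\cdot z\,d\pi':\pi'\in\Pi(\nu,T\#\gamma)\}$ coincide, and taking suprema yields the claim. The only genuinely delicate point is the lifting implication; the rest is bookkeeping of marginals together with an elementary integrability estimate.
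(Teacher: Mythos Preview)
Your proof is correct and follows essentially the same route as the paper: the paper carries out the lifting step by explicitly disintegrating $\mathrm{Law}(Y,T(Y))$ over $T\#\gamma$ and recombining the resulting kernel with $\pi'$, which is precisely the content of the gluing lemma you invoke (and which you yourself note parenthetically as an alternative). One small slip: the map $\pi\mapsto S\#\pi$ is surjective from $\Pi(\nu,\gamma)$ onto $\Pi(\nu,T\#\gamma)$ but need not be injective (take $T$ constant); however, surjectivity together with the easy direction is all that is needed for the two sets of integrals to coincide, so your conclusion stands.
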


\proof
If $\pi \in \Pi(\nu,\gamma)$ and $\pi'$ is the image of $\pi$ under the application $(x,y)\mapsto (x,T(y))$, then it is clear that $\pi' \in \Pi(\nu,T\#\gamma)$.

Conversely, let us prove that all couplings between $\nu$ and $T\#\gamma$ is of this form. 
Let $Y \sim \gamma$ and consider $P = \mathrm{Law}(Y,T(Y))$. 
The coupling $P$ admits the following disintegration 
\[
dP(y,z) = d\gamma'(z)dp^z(y),
\]
where  $\gamma' = T\#\gamma$ is the second marginal of $P$ and $(p^z)_{z\in \R^{d'}}$ is a probability kernel from $\R^{d'}$ to $\R^d$.
Let $\pi' \in \Pi(\nu,T\#\gamma)$ and define
\[
d\pi(x,y) =  d\pi'(x,z)dp^z(y).
\]
In other words, for all positive measurable function $f$
\[
\int f(x,y) \,d\pi(x,y) = \int f(x,y)\,dp^z(y)d\pi'(x,z).
\]
Therefore
\[
\int g(x) \,d\pi(x,y) = \int g(x)\,dp^z(y)d\pi'(x,z) = \int g(x)\,d\pi'(x,y) = \int g(x)\,d\nu(x)
\]
and
\[
\int g(y) \,d\pi(x,y) = \int g(y)\,dp^z(y)d\pi'(x,z) = \int g(y)\,dp^z(y)d\gamma'(z)  = \int g(y)\,dP(y,z) = \int g(y)\,d\gamma(y),
\]
which proves that $\pi \in \Pi(\nu,\gamma)$.
Furthermore, since $d\pi'(x,z) = d\gamma'(z) d\pi'_z(x)$, it holds
\begin{align*}
 \int f(x, T(y)) \,d\pi(x,y) &= \int f(x, T(y))\,dp^z(y)d\pi'(x,z)\\
 & =  \int f(x, T(y))\,dp^z(y)d\gamma'(z) d\pi'_z(x)\\
 & = \int f(x, z)\,d\gamma'(z) d\pi'_z(x) \\
 & =  \int f(x,z) \,d\pi'(x,z),
\end{align*}
where the third equality comes from the fact that for $\gamma'$ almost every $z$, $T(y)=z$ for $p^z$ almost all $y$.
One concludes that $\pi'= (\mathrm{I}_d,T)\#\pi$, which completes the proof.
\endproof

\subsection{Existence of solutions for (GWB)}
We show in the following that the primal minimization problem~\eqref{ref:GWB} has solutions and that
a solution is generally not unique. 

\begin{proposition}
The problem~\eqref{ref:GWB} has solutions.
More precisely, the function $\mathcal{F}$ defined in \eqref{eq:F} admits at least one minimizer $\gamma^* \in \mathcal{P}_2(\R^d)$. Moreover, if at least one of the measures $\tilde{\nu}_i$ is absolutely continuous the minimizer is unique. 
\end{proposition}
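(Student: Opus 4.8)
The plan is to transfer everything through the two reductions already set up in the paper. First, by the technical reduction of Section~\ref{sec:technical}, one may assume without loss of generality that $A$ is positive definite: the functional $\bar{\mathcal F}$ of \eqref{eq:barF} has exactly the same form as $\mathcal F$ but with the now invertible matrix $\bar A$ of \eqref{eq:barA}, and any minimizer $\bar\gamma^\ast$ of $\bar{\mathcal F}$ yields the minimizer $Q\#\bar\gamma^\ast$ of $\mathcal F$ (supported on $K^\perp$); conversely the two infima coincide, so $\mathcal F$ has a minimizer as soon as $\bar{\mathcal F}$ does. Hence it is enough to treat the case $A$ invertible.

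Assuming $A$ invertible, I would simply combine Proposition~\ref{prop:lien} with Proposition~\ref{prop:MM-AC}. Each $\tilde\nu_i = (A^{-1/2}P_i^T)\#\nu_i$ belongs to $\mathcal P_2(\R^d)$, because $A^{-1/2}P_i^T$ is a linear map and thus sends measures with finite second moment to measures with finite second moment; consequently $\mathcal G$ in \eqref{eq:G} is precisely the classical Wasserstein barycenter functional \eqref{eq:W2_bary} associated with the $\tilde\nu_i$ and the weights $\lambda_i$. By point~(1) of Proposition~\ref{prop:MM-AC}, $\mathcal G$ admits a minimizer $\mu^\ast$; by Proposition~\ref{prop:lien}, $\gamma^\ast = A^{-1/2}\#\mu^\ast$ then minimizes $\mathcal F$, which proves existence.

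For uniqueness (stated in the regime where the $\tilde\nu_i$ are defined, i.e. $A$ invertible), the key observation is that $\gamma \mapsto A^{1/2}\#\gamma$ is a bijection of $\mathcal P_2(\R^d)$ onto itself, with inverse $\mu \mapsto A^{-1/2}\#\mu$, since $A^{1/2}$ is an invertible linear map. By Proposition~\ref{prop:lien} this bijection carries the set of minimizers of $\mathcal F$ onto the set of minimizers of $\mathcal G$, so the two problems have the same number of solutions. If one of the $\tilde\nu_i$ is absolutely continuous with respect to Lebesgue measure, point~(2) of Proposition~\ref{prop:MM-AC} gives uniqueness of the minimizer of $\mathcal G$, hence uniqueness of the minimizer of $\mathcal F$. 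I do not expect a genuine obstacle here: the whole argument is a dictionary translation via Proposition~\ref{prop:lien}, and the only steps requiring a line of justification are that the reformulation is faithful at the level of minimizers and that the $\tilde\nu_i$ have finite second moments, both immediate from linearity.
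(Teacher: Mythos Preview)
Your proof is correct and follows essentially the same route as the paper: reduce to the case $A$ invertible via Section~\ref{sec:technical}, then use Proposition~\ref{prop:lien} to transfer both existence and uniqueness to the classical barycenter problem for the $\tilde\nu_i$, where Agueh--Carlier's results (summarized in Proposition~\ref{prop:MM-AC}) apply directly. The paper additionally sketches the underlying compactness argument (lower semicontinuity of $\mathcal G$ plus tightness of sublevel sets via Prokhorov), but this is exactly what is packaged into Proposition~\ref{prop:MM-AC}(1), so there is no substantive difference.
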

\proof 
As explained in Section \ref{sec:technical}, one can assume without loss of generality that the matrix $A$ defined in \eqref{eq:matrixA} is invertible. According to Proposition \ref{prop:lien}, it is enough to show that $\mathcal{G}$ attains its minimum. This follows from \cite[Proposition 2.3]{agueh11}. Let us briefly recall the argument. The functional $\mathcal{G}$ is lower semi-continuous (for the usual weak topology) and its sublevel sets $\{\mathcal{G}\leq k\}$, $k\geq0$, are compact (thanks to Prokhorov Theorem), thus it attains its minimum. Uniqueness follows from \cite[Proposition 3.5]{agueh11}. It can also be derived from the strict convexity of $\mu \mapsto W_2^2(\nu,\mu)$, when $\nu$ has a density (\cite[Corollary 2.10]{ABCM11}).
\endproof

Let us emphasize that we don't have uniqueness of the solution
in general. Even if  $A=\sum_{i=1}^p \lambda_i P_i^T P_i$ is invertible, if $\gamma^*$ is a solution, any probability distribution $\mu$
on $\R^d$ satisfying $P_i \# \gamma^*  = P_i \# \mu$ for all $i=1,\dots,
p$ is also a solution of the minimization problem for instance. The question of the
existence and uniqueness of probability measures with known and overlapping absolutely continuous marginals is an important problem in probability, see for instance the recent~\cite{kazi2019construction}.

The following proposition shows that in certain circumstances, the projections of generalized barycenters are uniquely determined.

\begin{proposition}\label{prop:uniquenessproj}
Suppose that $\nu_i$ has a density on $\R^{d_i}$ for all $i \in \llbracket 1,p \rrbracket$, then if $\gamma_1$ and $\gamma_2$ are solutions of \eqref{ref:GWB}, it holds $P_i\#\gamma_1=P_i\#\gamma_2$ for all $i \in \llbracket 1,p \rrbracket$.
\end{proposition}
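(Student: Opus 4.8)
\proof
The plan is to combine the convexity of $\mathcal{F}$ with the strict convexity of $\mu\mapsto W_2^2(\nu_i,\mu)$, which holds because each $\nu_i$ has a density; notably, neither the invertibility of $A$ nor Proposition~\ref{prop:lien} will be needed.

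I would start from two elementary facts. First, for each $i$ the push-forward $\gamma\mapsto P_i\#\gamma$ is affine on $\mathcal{P}_2(\R^d)$. Second, for any fixed $\nu$, the map $\mu\mapsto W_2^2(\nu,\mu)$ is convex: given optimal plans $\pi_0\in\Pi(\nu,\mu_0)$ and $\pi_1\in\Pi(\nu,\mu_1)$, the plan $\tfrac12(\pi_0+\pi_1)$ lies in $\Pi(\nu,\tfrac12(\mu_0+\mu_1))$, whence
\[
W_2^2\!\left(\nu,\tfrac{\mu_0+\mu_1}{2}\right)\ \le\ \tfrac12 W_2^2(\nu,\mu_0)+\tfrac12 W_2^2(\nu,\mu_1).
\]
Together these show that each $\gamma\mapsto W_2^2(\nu_i,P_i\#\gamma)$ is convex, hence so is $\mathcal{F}=\sum_i\lambda_i W_2^2(\nu_i,P_i\#\,\cdot\,)$, and the set of its minimizers is convex.

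Next, given two solutions $\gamma_1,\gamma_2$ of \eqref{ref:GWB}, I would consider the midpoint $\gamma=\tfrac12(\gamma_1+\gamma_2)$, which is again a minimizer, with $P_i\#\gamma=\tfrac12(P_i\#\gamma_1+P_i\#\gamma_2)$ for every $i$. Writing $m=\min\mathcal{F}$, convexity gives $m=\mathcal{F}(\gamma)\le\tfrac12(\mathcal{F}(\gamma_1)+\mathcal{F}(\gamma_2))=m$, so every inequality used is an equality; since the weights $\lambda_i$ are strictly positive, the $i$-th convexity inequality is in fact an equality for each $i\in\llbracket 1,p\rrbracket$, i.e.
\[
W_2^2\!\left(\nu_i,\tfrac12\big(P_i\#\gamma_1+P_i\#\gamma_2\big)\right)=\tfrac12 W_2^2(\nu_i,P_i\#\gamma_1)+\tfrac12 W_2^2(\nu_i,P_i\#\gamma_2).
\]
Finally, since $\nu_i$ has a density on $\R^{d_i}$, the functional $\mu\mapsto W_2^2(\nu_i,\mu)$ is \emph{strictly} convex (\cite[Corollary 2.10]{ABCM11}, already invoked above for uniqueness), so this equality forces $P_i\#\gamma_1=P_i\#\gamma_2$, which is the assertion.

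The only delicate ingredient is the strict convexity of $\mu\mapsto W_2^2(\nu_i,\mu)$ for absolutely continuous $\nu_i$, which is however an off-the-shelf result; everything else is routine convexity bookkeeping. (An alternative route would transport the problem, via Proposition~\ref{prop:lien}, to the classical barycenter of the $\tilde{\nu}_i$ and argue there about the optimal plans, but the direct argument above is shorter.)
\endproof
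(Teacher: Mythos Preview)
Your proof is correct and follows essentially the same approach as the paper's: both arguments hinge on the convexity of $\mathcal{F}$ along $t\mapsto(1-t)\gamma_1+t\gamma_2$ together with the strict convexity of $\mu\mapsto W_2^2(\nu_i,\mu)$ for absolutely continuous $\nu_i$ from \cite[Corollary 2.10]{ABCM11}. The paper phrases it as a contradiction (assume $P_{i_o}\#\gamma_1\neq P_{i_o}\#\gamma_2$ and derive $\mathcal{F}(\tfrac12\gamma_1+\tfrac12\gamma_2)<\min\mathcal{F}$), whereas you argue directly via the equality case, but the content is the same.
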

\proof
Let $\gamma_1$ and $\gamma_2$ be two solutions of \eqref{ref:GWB}, and let us assume by contradiction that $P_{i_o}\#\gamma_1\neq P_{i_o}\#\gamma_2$ for some $i_o  \in \llbracket 1,p \rrbracket$. Consider the function $f(t) = \mathcal{F}((1-t)\gamma_1 + t\gamma_2)$, $t\in [0,1].$ Since for all $i$, $P_i\# ((1-t)\gamma_1 + t\gamma_2) = (1-t)P_i\# \gamma_1 + tP_i\#\gamma_2$, the function $f$ is convex on $[0,1]$. Moreover, since $\nu_{i_o}$ has a density and $P_{i_o}\#\gamma_1\neq P_{i_o}\#\gamma_2$, it follows from \cite[Corollary 2.10]{ABCM11} that the function $t \mapsto W_2^2 (\nu_{i_o}, (1-t)P_{i_o}\# \gamma_1 + tP_{i_o}\#\gamma_2)$ is strictly convex. Thus $f$ is strictly convex as well and so 
\[
f(1/2) = \mathcal{F}\left( \frac{1}{2}\gamma_1 + \frac{1}{2}\gamma_2 \right) <  \frac{1}{2}f(0) + \frac{1}{2}f(1) = \frac{1}{2}\mathcal{F}\left(\gamma_1 \right) + \frac{1}{2}\mathcal{F}\left(\gamma_2 \right) = \min \mathcal{F},
\]
which contradicts the fact that $\gamma_1$ and $\gamma_2$ are minimizers of $\mathcal{F}$ and completes the proof.
\endproof

\medskip

In the case of discrete measures supported by a finite number of points, we do not have uniqueness for the classical barycenter~\cite{anderes2016discrete} and this also holds true for the generalized barycenter. However, if two solutions  $\gamma_1$ and $\gamma_2$ of~\eqref{ref:GWB} in the discrete case are  such that $P_i \# \gamma_1 = P_i \# \gamma_2$ for all $i\in \llbracket 1,p \rrbracket$, with $p$ large enough, then it can be shown that $\gamma_1 = \gamma_2$ since a finite but large enough number of projections can characterize a point cloud~\cite{heppes1956determination,fernandez2017determining}. 

Figure \ref{fig:no_unicity} shows an example where the measures
$\nu_1,\dots,\nu_p$ are all several 1d projections of a  discrete measure
(in yellow) in $\R^2$. In this discrete case, the problem~\eqref{ref:GWB} has
several solutions, including the yellow distribution for which
the value of the energy is $0$. We show in black the reconstruction of a
probability measure with exactly the same projections (the algorithm used for
this reconstruction will be described in Section~\ref{sec:computing}).
We see that when the number of 1d projections increases, the reconstructed measure
gets closer to the discrete yellow measure. For 5 projections, the
      reconstructed generalized barycenter is the same as the original distribution in this example.

\begin{figure}

    \subfloat{
      \includegraphics[width=7cm, trim={14cm 2cm 13cm 2cm},clip]{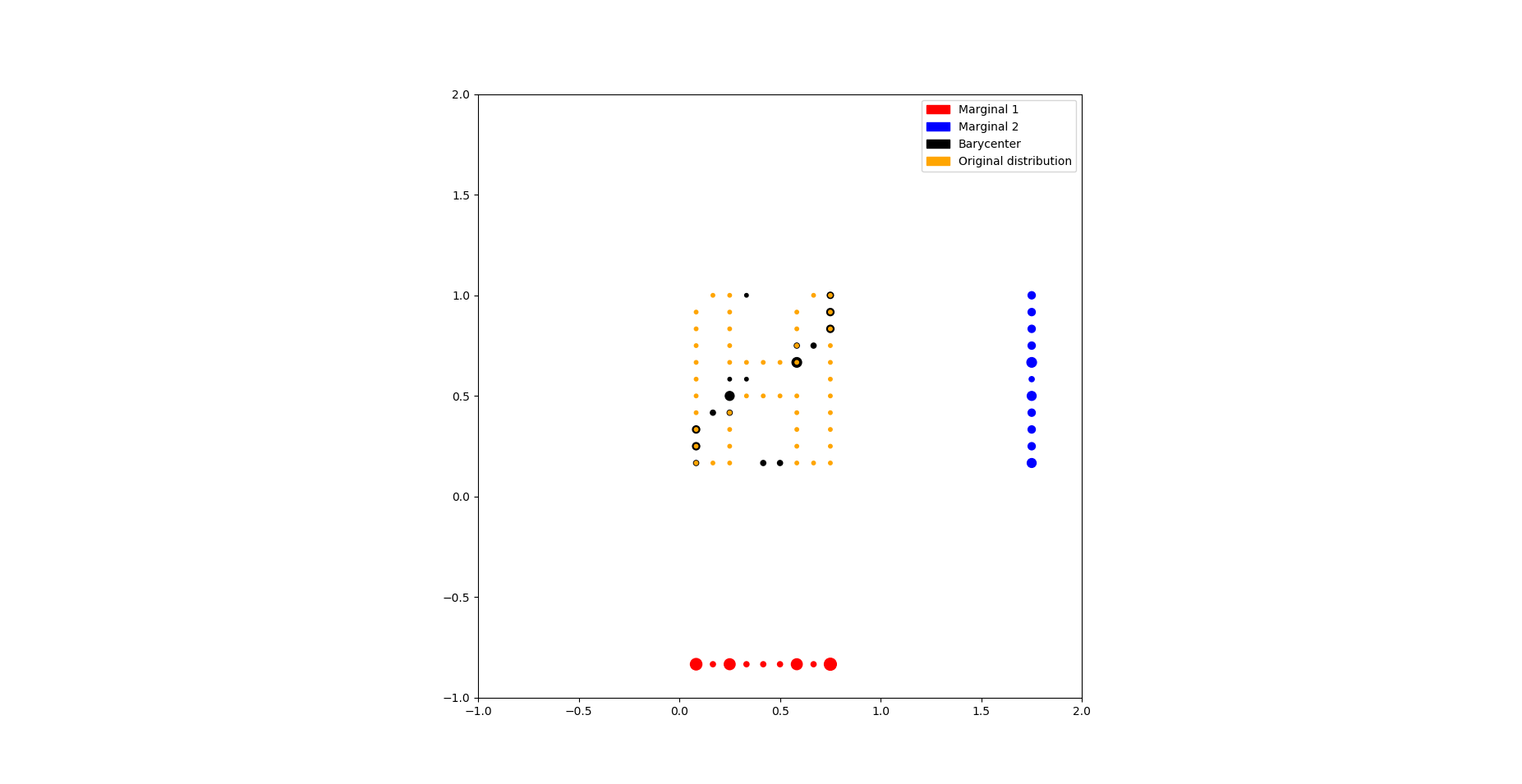}}
     \subfloat{
      \includegraphics[width=7cm, trim={14cm 2cm 13cm 2cm},clip]{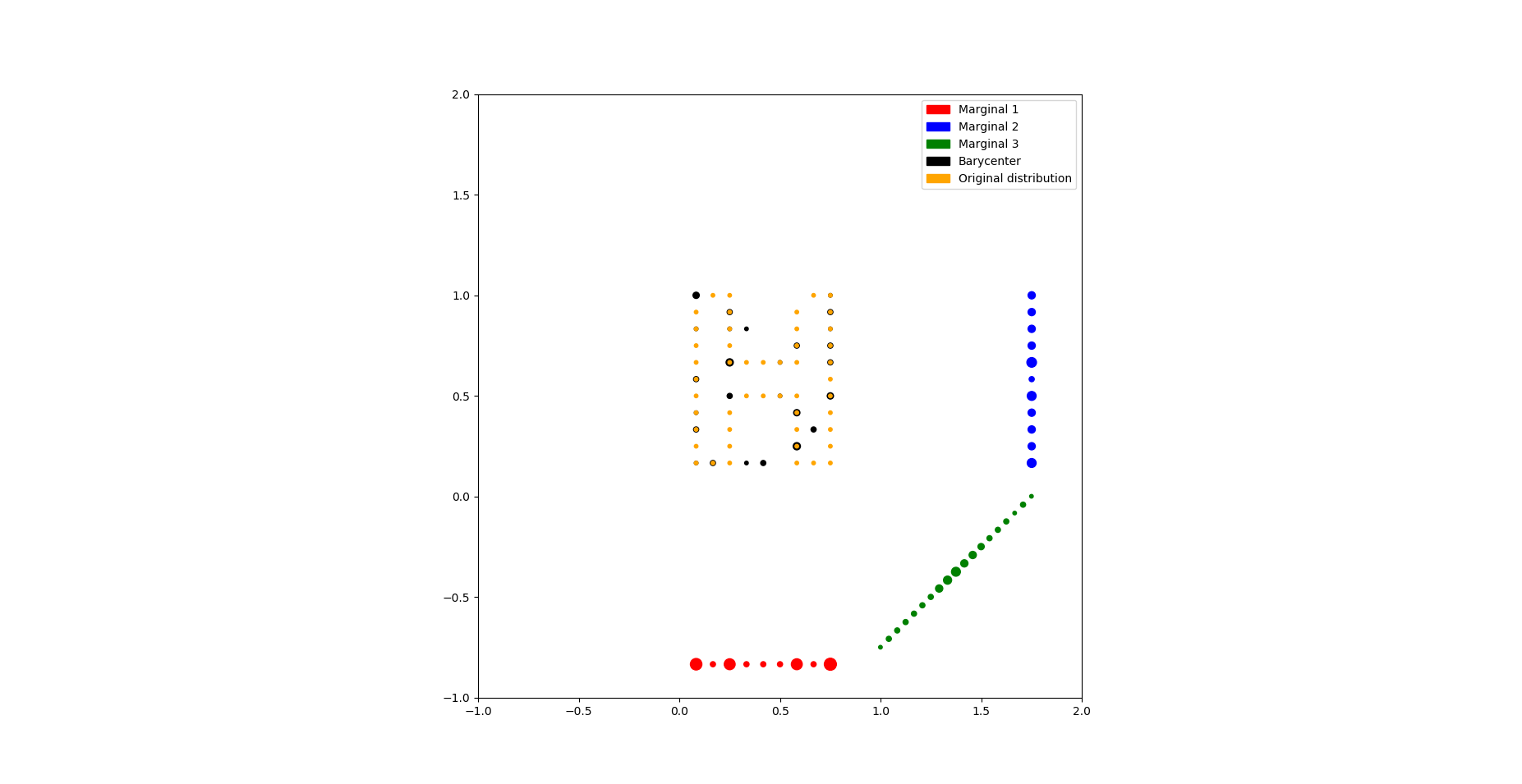}}

    \subfloat{
      \includegraphics[width=7cm, trim={14cm 2cm 13cm 2cm},clip]{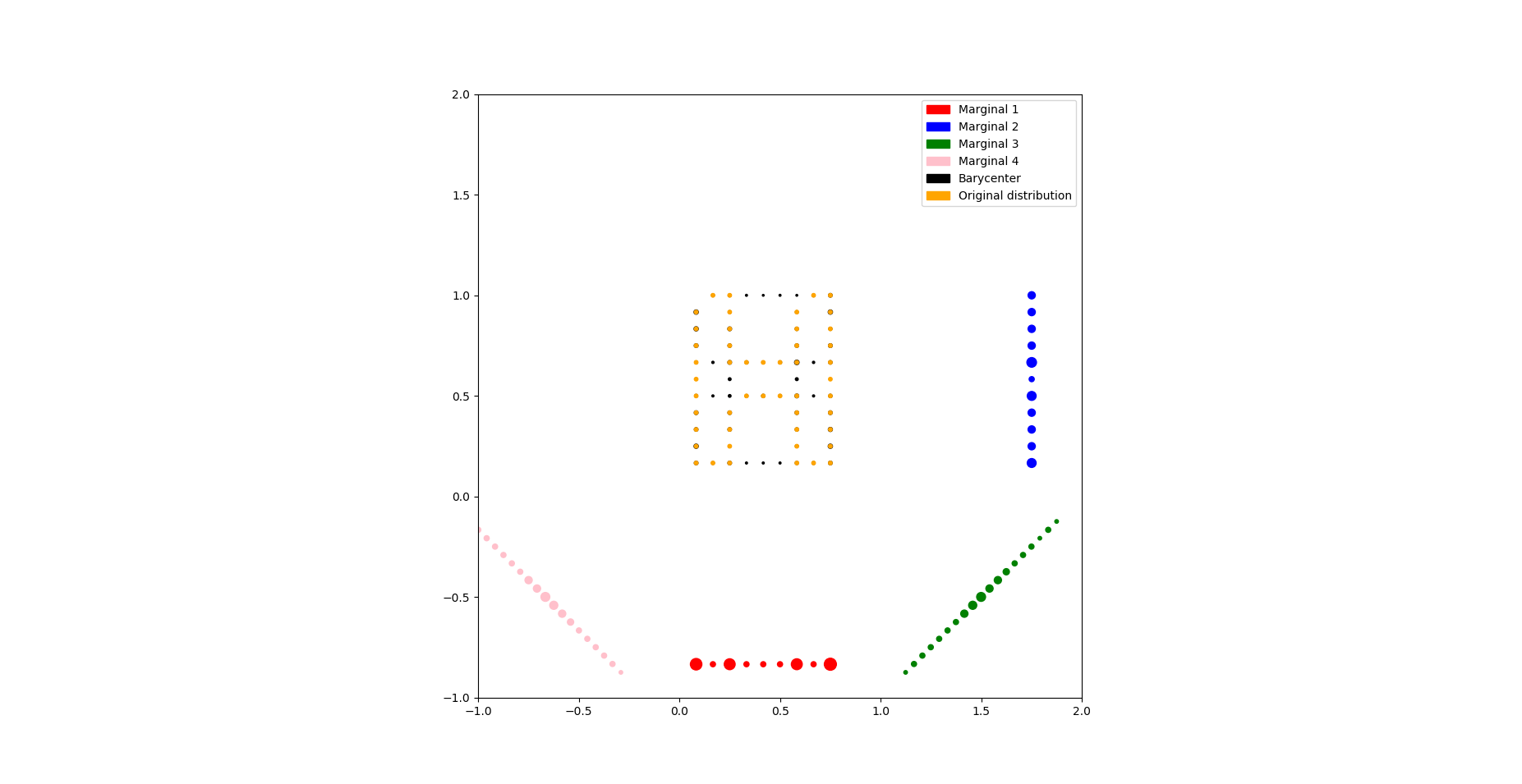}}
      \subfloat{
    \includegraphics[width=7cm, trim={14cm 2cm 13cm 2cm},clip]{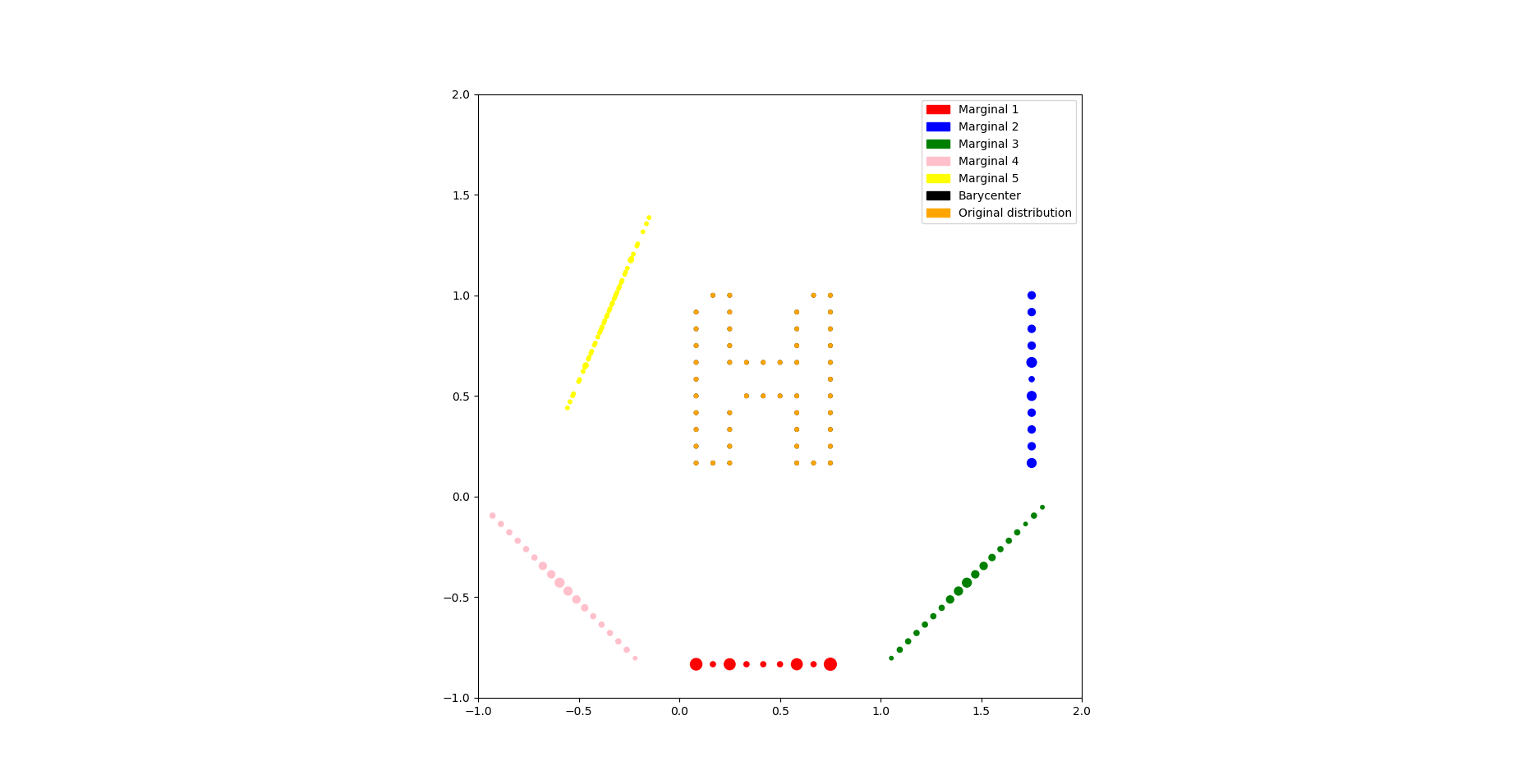}}
      
    \caption{In this example, the measures
$\nu_1,\dots,\nu_p$ are several 1d projections of a  discrete measure
(shown in yellow) in $\R^2$. The number of projections varies from $2$ to
$5$. The reconstructed generalized
      barycenter is shown in black, illustrating the non uniqueness of
      the solution (the black distribution has exactly the same
      projections as the yellow one). The larger the number of marginals, the
      more accurate is the reconstruction. For 5 projections, the
      reconstructed generalized barycenter is the same as the original distribution.}
    \label{fig:no_unicity}
\end{figure}

\subsection{Link between (GWB) and multi marginal optimal transport \label{sec: multimarginal}}
In this section, we assume that the matrix $A$ defined in \eqref{eq:matrixA} is invertible.
For $x=(x_1,\ldots,x_p)\in \R^{d_1}\times\cdots\times \R^{d_p}$,
 we define the generalized Euclidean barycenter as follows
\[
B_{\text{gen}}(x)=B_{\text{gen}}(x_1,\ldots,x_p) =  \arg\min_{y\in \R^d} \sum_{i=1}^p 
 \lambda_i |x_i-P_i(y)|^2 = A^{-1} \sum_{i=1}^p\lambda_i P_i^T(x_i),
\]
 and we also define the cost $c$ by
\[
c(x)=\sum_{i=1}^p \lambda_i |x_i-P_i(B_{\text{gen}}(x))|^2,\qquad x=(x_1,\ldots,x_p)\in \R^{d_1}\times\cdots\times \R^{d_p}.
\]
 We propose to study the multimarginal problem for the measures
 $\nu_1,\ldots,\nu_p$ and cost function $c$, {\em i.e.}
\begin{equation}
 \inf \left\{ \int_{\R^{d_1+\dots+d_p}} c(x)\,d\pi(x),\pi\in
   \Pi(\nu_1,\ldots,\nu_p)\right\}.
 \tag{MM}
 \label{eq:MMOT}
\end{equation}
\begin{proposition}
\label{prop:p=GWB}
The infimum in~\eqref{eq:MMOT} and~\eqref{ref:GWB} are equal. Moreover, if $\pi^*$ is a solution
of~\eqref{eq:MMOT}, then $\gamma^* = B_{\text{gen}}\#\pi^*$  is a solution of~\eqref{ref:GWB}.
\end{proposition}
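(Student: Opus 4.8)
The plan is to transport the statement to the classical setting. Proposition \ref{prop:lien} already identifies the minimizers of $\mathcal{F}$ with those of the classical barycenter functional $\mathcal{G}$ built from the measures $\tilde{\nu}_i=(A^{-1/2}P_i^T)\#\nu_i$, and the Agueh--Carlier correspondence recalled in Proposition \ref{prop:MM-AC} relates $\mathcal{G}$ to the classical multi-marginal problem \eqref{eq:W2_multimarge_definition} for the $\tilde{\nu}_i$; what remains is to match \eqref{eq:MMOT} with that classical multi-marginal problem. Throughout I would use the standing assumption that $A$ is invertible (so that $B_{\text{gen}}$ and $c$ make sense and no preliminary reduction is needed) together with the measurable map
\[
f:\R^{d_1}\times\cdots\times\R^{d_p}\to(\R^d)^p,\qquad f(x_1,\ldots,x_p)=\bigl(A^{-1/2}P_1^Tx_1,\ldots,A^{-1/2}P_p^Tx_p\bigr),
\]
which, by the very definition of the $\tilde{\nu}_i$, pushes $\Pi(\nu_1,\ldots,\nu_p)$ into $\Pi(\tilde{\nu}_1,\ldots,\tilde{\nu}_p)$.

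The core is a pointwise computation. Writing $y=(y_1,\ldots,y_p)=f(x)$ and expanding the quadratic forms using $A=\sum_i\lambda_iP_i^TP_i$ and $B_{\text{gen}}(x)=A^{-1}\sum_i\lambda_iP_i^Tx_i$ gives, on one hand, $A^{1/2}B_{\text{gen}}(x)=\sum_i\lambda_iy_i=B(y)$, the classical weighted barycenter of the $y_i$ in $\R^d$, and on the other hand
\[
c(x)=\sum_{i=1}^p\lambda_i\bigl|y_i-B(y)\bigr|^2+\sum_{i=1}^p\lambda_i\bigl(|x_i|^2-x_i^TP_iA^{-1}P_i^Tx_i\bigr).
\]
Here the first sum is the cost function of the classical multi-marginal problem \eqref{eq:W2_multimarge_definition} written for the marginals $\tilde{\nu}_i$ (with weights $\lambda_i$), which I denote $\tilde{c}$, while the second sum integrates, against any $\pi\in\Pi(\nu_1,\ldots,\nu_p)$, to the constant $C=\sum_i\lambda_i\int(|x|^2-x^TP_iA^{-1}P_i^Tx)\,d\nu_i(x)$ already appearing in the proof of Proposition \ref{prop:lien}; hence $\int c\,d\pi=C+\int\tilde{c}\,d(f\#\pi)$ for every such $\pi$. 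I would then check that $f\#:\Pi(\nu_1,\ldots,\nu_p)\to\Pi(\tilde{\nu}_1,\ldots,\tilde{\nu}_p)$ is surjective: given $\tilde{\pi}$ in the target, disintegrate for each $i$ the law of $(X_i,A^{-1/2}P_i^TX_i)$ with $X_i\sim\nu_i$ as $d\tilde{\nu}_i(y_i)\,dq_i^{y_i}(x_i)$ and set $d\pi(x)=d\tilde{\pi}(y)\prod_i dq_i^{y_i}(x_i)$; the disintegration argument of Lemma \ref{lem:couplage}, carried out for each $i$, shows $\pi\in\Pi(\nu_1,\ldots,\nu_p)$ and $f\#\pi=\tilde{\pi}$. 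Taking infima in the cost identity and invoking Proposition \ref{prop:MM-AC} for the $\tilde{\nu}_i$ together with Proposition \ref{prop:lien}, we obtain
\[
\inf\eqref{eq:MMOT}=C+\inf_{\tilde{\pi}\in\Pi(\tilde{\nu}_1,\ldots,\tilde{\nu}_p)}\int\tilde{c}\,d\tilde{\pi}=C+\inf\mathcal{G}=\inf\mathcal{F}=\inf\eqref{ref:GWB}.
\]

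Finally, for the last assertion, let $\pi^*$ be a solution of \eqref{eq:MMOT} and put $\tilde{\pi}^*=f\#\pi^*\in\Pi(\tilde{\nu}_1,\ldots,\tilde{\nu}_p)$. The cost identity gives $\int\tilde{c}\,d\tilde{\pi}^*=\int c\,d\pi^*-C$, which by the chain of equalities above equals the infimum of the classical multi-marginal problem for the $\tilde{\nu}_i$; hence $\tilde{\pi}^*$ is optimal there, and Proposition \ref{prop:MM-AC}(3) gives that $B\#\tilde{\pi}^*$ minimizes $\mathcal{G}$. Since $A^{1/2}\circ B_{\text{gen}}=B\circ f$, we have $A^{1/2}\#\gamma^*=B\#\tilde{\pi}^*$ with $\gamma^*=B_{\text{gen}}\#\pi^*$, so $A^{1/2}\#\gamma^*$ minimizes $\mathcal{G}$, and Proposition \ref{prop:lien} then yields that $\gamma^*$ minimizes $\mathcal{F}$, i.e. solves \eqref{ref:GWB}. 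The main obstacle I anticipate is the surjectivity of $f\#$ (the factorwise gluing step above), together with the bookkeeping needed to recognize that the residual term of the cost identity is exactly the constant $C$ of Proposition \ref{prop:lien}; everything else is routine on top of the Agueh--Carlier theory.
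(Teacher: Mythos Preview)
Your argument is correct. The paper proves Proposition~\ref{prop:p=GWB} by a direct two-sided inequality: for \eqref{ref:GWB}$\le$\eqref{eq:MMOT} it pushes an arbitrary $\pi\in\Pi(\nu_1,\ldots,\nu_p)$ forward by $B_{\text{gen}}$ and bounds each $W_2^2(\nu_i,P_i\#\gamma)$ by the corresponding marginal integral; for the converse it starts from an arbitrary $\gamma$, picks optimal plans $\eta_i\in\Pi(\nu_i,P_i\#\gamma)$, and glues them through $\gamma$ via disintegration to manufacture a competitor $\pi$ for \eqref{eq:MMOT}. Your route is genuinely different: you factor through the classical problem by the change of variables $f(x)=(A^{-1/2}P_1^Tx_1,\ldots,A^{-1/2}P_p^Tx_p)$, establish the pointwise identity $c(x)=\tilde{c}(f(x))+\sum_i\lambda_i(|x_i|^2-x_i^TP_iA^{-1}P_i^Tx_i)$, and then invoke Propositions~\ref{prop:MM-AC} and~\ref{prop:lien}. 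The surjectivity of $f_\#$ on couplings, which you obtain by the coordinatewise disintegration $dq_i^{y_i}$, is exactly the multi-factor version of Lemma~\ref{lem:couplage} and goes through as you describe (the key being that $A^{-1/2}P_i^Tx_i=y_i$ for $q_i^{y_i}$-a.e.\ $x_i$). What your approach buys is modularity: once the cost identity and the bijection of coupling sets are in place, both the equality of values and the transfer of optimizers follow at once from Agueh--Carlier, with the constant $C$ matching that of Proposition~\ref{prop:lien} automatically. What the paper's approach buys is self-containment: it never appeals to Proposition~\ref{prop:MM-AC} and would work verbatim for cost structures where no such reduction is available.
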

\begin{proof}
Let us denote $D = d_1+\cdots+d_p$. 
First let us show that $~\eqref{ref:GWB}\leq~\eqref{eq:MMOT}.$
Let $\pi\in\Pi(\nu_1,\ldots,\nu_p)$ and define $\gamma=B_{\text{gen}}\#\pi$. For all $i \in \llbracket 1,p \rrbracket$, we define $p_i$ the projection from $\R^D=\R^{d_1}\times\cdots\times \R^{d_p}$ to $\R^{d_i}$ such that $p_i(x_1,\ldots,x_p)=x_i$, and define $\eta_i = (p_i,P_i\circ B_{\text{gen}})\#\pi$. By construction, $\eta_i \in \Pi(\nu_i,P_i\#\gamma)$. Thus, 
\[
W_2^2(\nu_i,P_i\#\gamma)\leq \int_{\R^{d_i}\times \R^{d_i}} |x-y|^2\,d\eta_i(x,y) = \int_{\R^D} |x_i-(P_i\circ B_{\text{gen}})(x)|^2\,d\pi(x).
\]
As a consequence, for all $\pi\in \Pi(\nu_1,\ldots,\nu_p)$,
\[
\sum_{i=1}^p \lambda_i W_2^2(\nu_i,P_i\#\gamma)\leq \int_{\R^D}
c(x)\,d\pi(x).
\]
This holds for any $\pi \in \Pi(\nu_1,\ldots,\nu_p)$
and thus $~\eqref{ref:GWB}\leq~\eqref{eq:MMOT}.$

Conversely, let $\gamma \in \mathcal{P}_2(\R^d)$ and for all $i \in \llbracket 1,p \rrbracket$, let $\eta_i\in\Pi(\nu_i,P_i\#\gamma)$. By the disintegration theorem, there exist probability kernels $(\eta_i^y)_{y\in \R^{d_i}}$
such that 
\[
d\eta_i(x_i,y) = d\eta_i^y(x_i) d(P_i\#\gamma)(y).
\]
For all $f$ positive and measurable function on $\R^{d_i} \times \R^{d_i}$, it thus holds
\begin{align*}
\int_{\R^{d_i}\times \R^{d_i}} f(x_i,y)\,d\eta_i(x_i,y)&=\int_{\R^{d_i}} \left(\int_{\R^{d_i}} f(x_i,y)\,d\eta_i^y(x_i)\right)d(P_i\#\gamma)(y)\\
&=\int_{\R^d} \left(\int_{\R^{d_i}}f(x_i,P_i(y))\,d\eta_i^{P_i(y)}(x_i)\right)d\gamma(y)\\
  &=\int_{\R^d}\left(\int_{\R^D} f(x_i,P_i(y))\,d\eta_1^{P_1(y)}(x_1)\ldots d\eta_p^{P_p(y)}(x_p)\right)\,d\gamma(y).
\end{align*}
Let us then define a probability measure $\pi$ on $\R^D$ by setting for all positive measurable function $g$ on $\R^D$
\[
\int_{\R^D} g(x)\,d\pi(x)=\int_{\R^d}\left(\int_{\R^D} g(x)\,d\eta_1^{P_1(y)}(x_1)\ldots d\eta_p^{P_p(y)}(x_p)\right) d\gamma(y).
\]
By construction, $\pi\in\Pi(\nu_1,\ldots,\nu_p)$. Indeed, for any positive measurable function $g$ on $\R^{d_i}$, it holds
\begin{align*}
\int_{\R^D} g(x_i)\,d\pi(x)&=\int_{\R^d}\left(\int_{\R^D} g(x_i)\,d\eta_1^{P_1(y)}(x_1)\ldots d\eta_p^{P_p(y)}(x_p)\right) \,d\gamma(y)\\
&=\int_{\R^{d_i} \times \R^{d_i}} g(x_i)\,d\eta_i(x_i,y)=\int_{\R^{d_i}} g(x)\,d\nu_i(x).
\end{align*}
Finally, for any distribution $\gamma$ and $\eta_1,\ldots,\eta_p$ in
$\Pi(\nu_1,P_1\#\gamma),\ldots,\Pi(\nu_p,P_p\#\gamma)$, we have 
\begin{align*}
\sum_{i=1}^p \lambda_i \int_{\R^{d_i}\times\R^{d_i}} |x_i-y_i|^2\,d\eta_i(x_i,y_i) &= \sum_{i=1}^p \lambda_i \int_{\R^d} \left( \int_{\R^D} |x_i-P_i(y)|^2) \,d\eta_1^{P_1(y)}(x_1)\ldots d\eta_p^{P_p(y)}(x_p) \right)d\gamma(y)\\
&= \int_{\Rd\times\R^D}\sum_{i=1}^p \lambda_i |x_i-P_i(y)|^2 \,d\eta_1^{P_1(y)}(x_1)\ldots d\eta_p^{P_p(y)}(x_p) d\gamma(y)\\
&\geq \int_{\Rd\times\R^D} \sum_{i=1}^p \lambda_i |x_i-P_i(B_{\text{gen}}(x)))|^2 \,d\eta_1^{P_1(y)}(x_1)\ldots d\eta_p^{P_p(y)}(x_p) d\gamma(y)\\
&=\int_{\Rd} c(x)\,d\pi(x)\geq ~\eqref{eq:MMOT}.
\end{align*}
So we have, for any $\gamma \in \mathcal{P}_2(\R^d)$,  
\[
\sum_{i=1}^p \lambda_iW_2^2(\nu_i,P_i\#\gamma) = \inf_{\eta_1,\ldots,\eta_p} \sum_{i=1}^p \lambda_i \int_{\R^{d_i}\times\R^{d_i}} |x_i-y_i|^2\,d\eta_i(x_i,y) \geq ~\eqref{eq:MMOT}.
\]
It follows that $~\eqref{ref:GWB}\geq~\eqref{eq:MMOT}$ and so $~\eqref{ref:GWB}=~\eqref{eq:MMOT}$.

Finally, if $\pi^*$ is a solution of $~\eqref{eq:MMOT}$, then defining $\gamma^*=B_{\text{gen}}\#\pi^*$, we have
\[
~\eqref{ref:GWB} \leq \sum_{i=1}^p \lambda_i W_2^2(\nu_i,P_i\#\gamma^*) \leq \int_{\R^D} c(x)\,d\pi^*(x) = ~\eqref{eq:MMOT}= ~\eqref{ref:GWB}.
\]
This yields 
\[
~\eqref{ref:GWB}= \sum_{i=1}^p \lambda_i W_2^2 (\nu_i,P_i\#\gamma^*)
\]
and completes the proof.
\end{proof}

Let us give some insight on a specific case where all the probability measures $\nu_i$ are
projections from the same high dimensional probability measure $\nu$.  \begin{proposition}
\label{prop:Psupportedbyaplane}
Assume that $\gamma$ is in $\mathcal{P}_2(\R^d)$ and for each $i$ in
$\llbracket 1,p \rrbracket$, $\nu_i = P_i \# \gamma$. Let $P:\R^d \rightarrow
\R^{d_1+\dots +d_p}$ be the
linear application defined by $P(x)  =  (P_1(x),\dots,P_p(x))$
$\forall x \in \R^d$.   Then $\pi^* = P\# \gamma$ is a solution of ~\eqref{eq:MMOT}. 
If $d < D=d_1+\dots +d_p$, then $\pi^*$ is supported on a subspace of dimension $d$ of $\R^D$. 
\end{proposition}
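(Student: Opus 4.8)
The plan is to verify directly that $\pi^* = P\#\gamma$ is admissible for \eqref{eq:MMOT}, that it has zero cost, and that $0$ is the value of the problem; the support statement will then follow from a short dimension count. First I would note that, writing $p_i\colon \R^D\to\R^{d_i}$ for the $i$-th coordinate projection, one has $p_i\circ P = P_i$, so the $i$-th marginal of $\pi^*=P\#\gamma$ is $(p_i\circ P)\#\gamma = P_i\#\gamma = \nu_i$; hence $\pi^*\in\Pi(\nu_1,\ldots,\nu_p)$ (and it has finite second moment because $P$ is linear and $\gamma\in\mathcal{P}_2(\R^d)$).

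Next I would identify $B_{\text{gen}}\circ P$. Since $A$ is invertible throughout this section it is positive definite, and for any $y\in\R^d$ the map $z\mapsto\sum_{i=1}^p\lambda_i|P_i(y)-P_i(z)|^2 = (y-z)^TA(y-z)$ is strictly convex and vanishes only at $z=y$; hence $B_{\text{gen}}(P(y))=y$ (equivalently, this follows at once from the explicit formula $B_{\text{gen}}(x)=A^{-1}\sum_i\lambda_iP_i^T(x_i)$ together with $\sum_i\lambda_iP_i^TP_i=A$). Consequently $c(P(y))=\sum_i\lambda_i|P_i(y)-P_i(y)|^2=0$ for all $y$, so
\[
\int_{\R^D}c(x)\,d\pi^*(x) = \int_{\R^d}c(P(y))\,d\gamma(y) = 0.
\]
As $c\ge 0$, the infimum in \eqref{eq:MMOT} equals $0$ and is attained at $\pi^*$, which proves that $\pi^*$ solves \eqref{eq:MMOT}. (Alternatively, one may observe that $\mathcal{F}(\gamma)=\sum_i\lambda_iW_2^2(\nu_i,\nu_i)=0$, so by Proposition \ref{prop:p=GWB} the common value of \eqref{ref:GWB} and \eqref{eq:MMOT} is $0$ and $\pi^*$ attains it.)

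For the last assertion, $\pi^*=P\#\gamma$ is carried by the linear subspace $\mathrm{Im}(P)\subset\R^D$. Since $A$ is invertible, condition \eqref{eq:matrixA} holds, i.e. $\bigcap_{i=1}^p\mathrm{Ker}(P_i)=\{0\}$; this intersection is precisely $\mathrm{Ker}(P)$, so $P$ is injective and $\dim\mathrm{Im}(P)=d$. When $d<D$, this realizes $\pi^*$ as a measure supported on a proper $d$-dimensional subspace of $\R^D$.

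There is no real obstacle here; the only two points deserving a moment's care are the identity $B_{\text{gen}}\circ P=\mathrm{I}_d$, for which the standing invertibility of $A$ is precisely what forces the argmin defining $B_{\text{gen}}$ to reduce to the single point $y$, and the final dimension count, which again uses the equivalence (first lemma of Section \ref{sec:technical}) between invertibility of $A$ and triviality of $\bigcap_i\mathrm{Ker}(P_i)$.
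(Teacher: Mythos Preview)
Your proof is correct and follows essentially the same route as the paper: verify $\pi^*\in\Pi(\nu_1,\ldots,\nu_p)$, use $B_{\text{gen}}\circ P=\mathrm{I}_d$ to see that $c$ vanishes on the support of $\pi^*$, and conclude by the nonnegativity of $c$; the paper argues the support claim in one line from the linearity of $P$, while you additionally justify $\dim\mathrm{Im}(P)=d$ via injectivity of $P$, which is a nice sharpening but not a different method.
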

\begin{proof}
First, observe that the probability measure $\gamma$ is solution of ~\eqref{ref:GWB}. Also, by definition of $B_{\text{gen}}$, we have for each $x \in \R^{d}$,
$B_{\text{gen}}(P(x))  = x$. 
If $\pi^* = P\# \gamma$, then clearly $\pi^*\in \Pi(\nu_1,\ldots,\nu_p)$ and  we have
\begin{equation*}
  \label{eq:Psupportedbyaplane}
\int_{\R^{d_1+\dots+d_p}} \sum_{i=1}^p\lambda_i |x_i - P_i(B_{\text{gen}}(x)|^2\,d\pi^*(x) = 0,
\end{equation*}
which means that $\pi^* = P\# \gamma$ is a solution of ~\eqref{eq:MMOT}. Since $P$ is
linear, and $\gamma$ lives in $\R^d$, if $D>d$ then $\pi^*$ lives in a
subspace of dimension $d$ of $\R^D$.
\end{proof}
For instance, as illustrated by Figure~\ref{fig:Psupportedbyaplane}, for a probability measure $\nu$ on the plane ($d = 2$)
and three linear projections on lines $P_1,P_2, P_3$, then the solution
$\gamma = P\# \nu$ of the multimarginal problem ~\eqref{eq:MMOT} on $\R^3$ will be
supported by a plane.

\begin{figure}
  \centering
  \includegraphics[width=4cm, trim={3cm 2cm 3cm 2.2cm},clip]{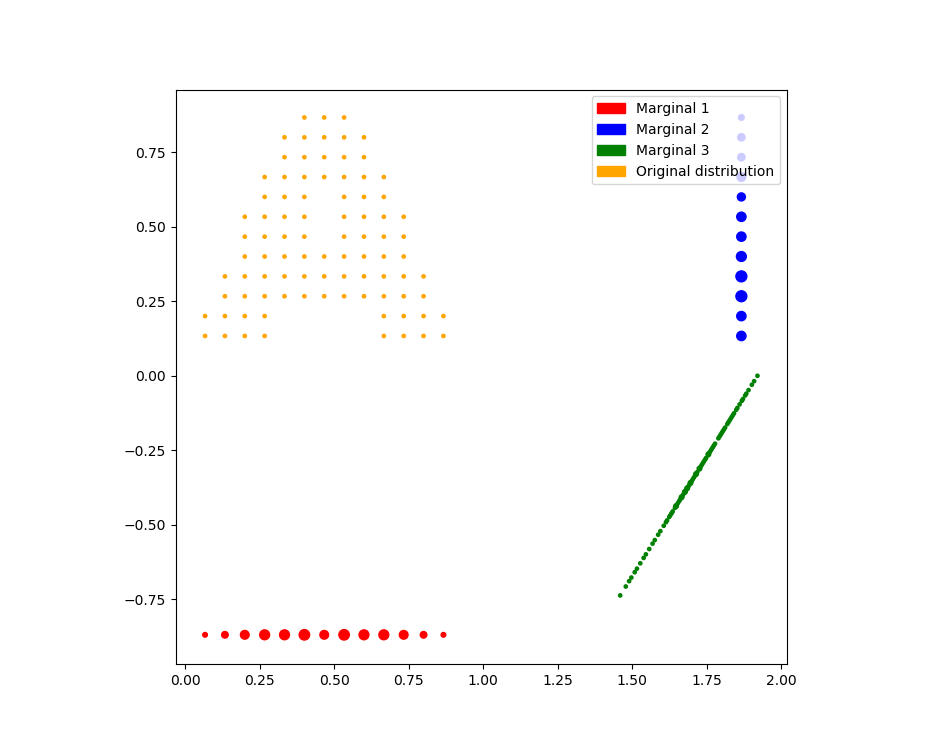}
  \includegraphics[width=4cm, trim={5cm 3cm 2.5cm 2.2cm},clip]{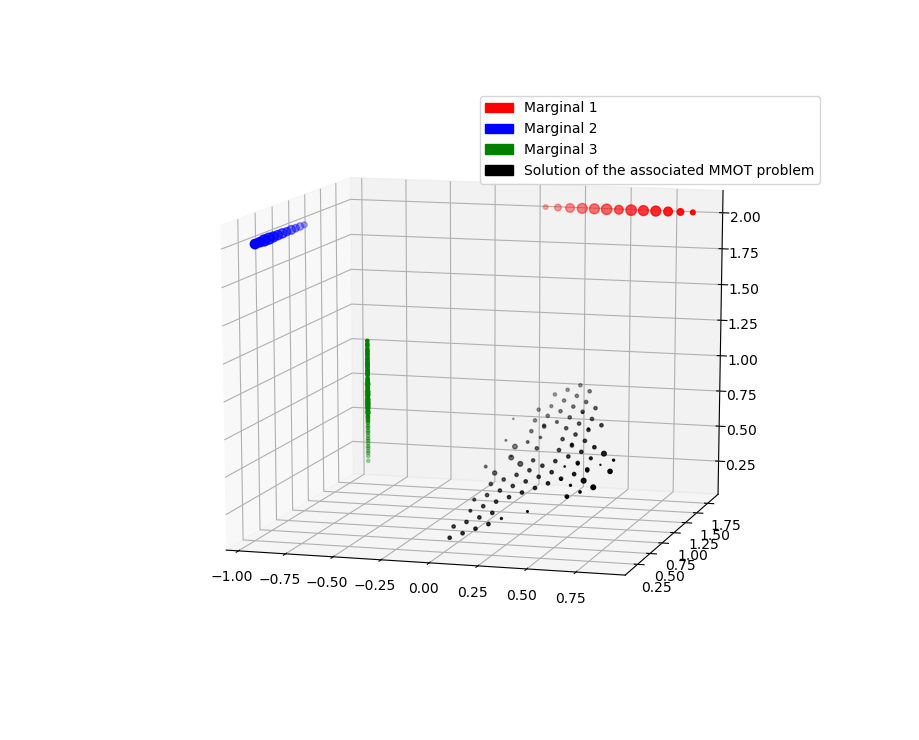}
  \includegraphics[width=4cm, trim={3cm 2cm 2cm 2cm},clip]{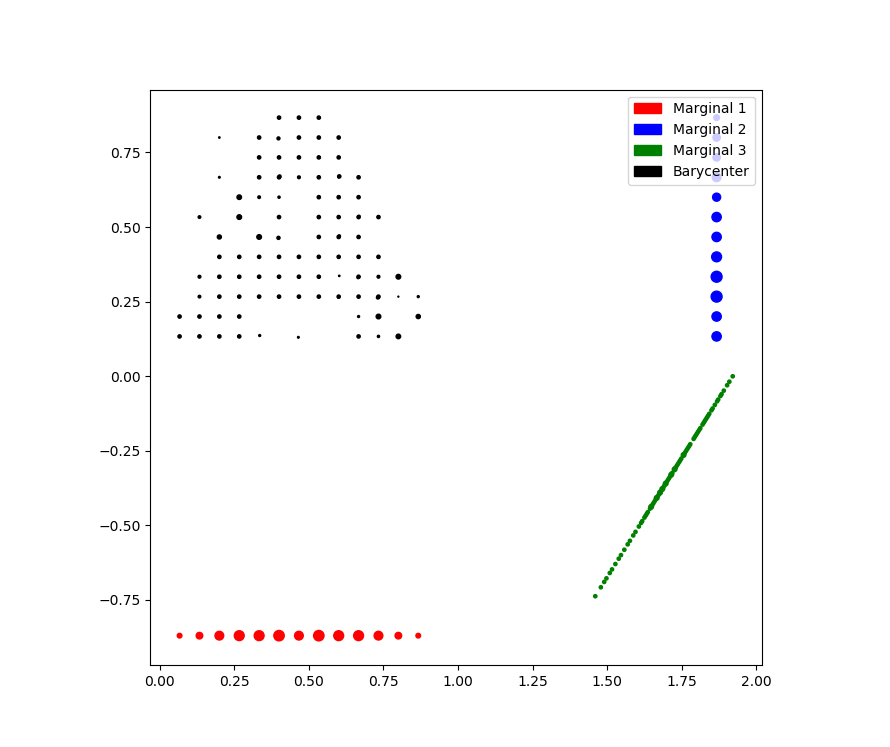}
  
  \caption{Left: A-shaped original distribution (in yellow) and
    three 1D projections. Center: Solution of the corresponding multimarginal
  problem (MM) for these three projections. The solution of the multimarginal problem (MM) is
  supported by a plane, as shown in
  Proposition~\ref{prop:Psupportedbyaplane}. Right: reconstructed Generalized
  barycenter (in black).}
  \label{fig:Psupportedbyaplane}
\end{figure}

\medskip

Proposition \ref{prop:p=GWB} above clarifies the link between the problems~\eqref{ref:GWB} 
and~\eqref{eq:MMOT}. Since~\eqref{eq:MMOT} can be solved by
linear programming, we can derive from this equivalence a way to solve exactly
~\eqref{ref:GWB}. To conclude this section, let us show that it is also possible (and somehow simpler) to relate solutions of \eqref{ref:GWB} to the solutions of the classical multimarginal problem \eqref{eq:W2_multimarge_definition} involving the probability measures $\tilde{\nu}_1,\ldots,\tilde{\nu}_p$ as in Proposition \ref{prop:lien}.

\begin{proposition}
For all $i\in \llbracket 1,p \rrbracket$, let $\tilde{\nu}_i =  (A^{-1/2}P_i^T)\# \nu_i$.
If $\pi^* \in \Pi(\tilde{\nu}_1,\ldots, \tilde{\nu}_p)$ is such that
\[
\int_{(\R^d)^p} |x-B(x)|^2\, d\pi^*(x) = \inf_{\pi \in \Pi(\tilde{\nu}_1,\ldots, \tilde{\nu}_p)} \int |x-B(x)|^2\,d\pi(x),
\]
where $B$ is defined in \eqref{eq:barycenter}, 
then $\gamma^* = A^{-1/2}\#(B\#\pi^*)$ is a solution of \eqref{ref:GWB}.
In other words, if $X_1,\ldots,X_p$ are random vectors whose joint distribution is $\pi^*$, then the law of 
\[
A^{-1/2}\left(\sum_{i=1}^p \lambda_i X_i\right)
\]
is a solution of \eqref{ref:GWB}.
\end{proposition}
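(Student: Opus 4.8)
The plan is to obtain this statement as a direct consequence of Proposition~\ref{prop:lien} combined with the Agueh--Carlier result recalled in Proposition~\ref{prop:MM-AC}(3), applied to the auxiliary measures $\tilde\nu_i$. As throughout the section, one first reduces to the case where $A$ is invertible via Section~\ref{sec:technical}; this is in fact already implicit in the statement, since the definition of $\tilde\nu_i = (A^{-1/2}P_i^T)\#\nu_i$ uses $A^{-1/2}$. I would start with the harmless observation that each $\tilde\nu_i$ belongs to $\mathcal{P}_2(\R^d)$: the map $A^{-1/2}P_i^T:\R^{d_i}\to\R^d$ is linear, hence has at most linear growth, so $\nu_i\in\mathcal{P}_2(\R^{d_i})$ forces $\tilde\nu_i$ to have a finite second moment and the cited results apply in $\mathcal{P}_2(\R^d)$.

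Next I would observe that the minimization problem appearing in the hypothesis on $\pi^*$ is \emph{exactly} the classical multimarginal transport problem \eqref{eq:W2_multimarge_definition} written for the $p$ probability measures $\tilde\nu_1,\ldots,\tilde\nu_p\in\mathcal{P}_2(\R^d)$ and the Euclidean barycenter $B$ of \eqref{eq:barycenter}. Therefore Proposition~\ref{prop:MM-AC}(3) applies verbatim: since $\pi^*$ is a minimizer of that multimarginal problem, the measure $\mu^*:=B\#\pi^*$ is a minimizer of $\mu\mapsto\sum_{i=1}^p\lambda_i W_2^2(\tilde\nu_i,\mu)$, which is precisely the functional $\mathcal{G}$ of \eqref{eq:G}.

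It then remains to transport this information back to $\mathcal{F}$ through Proposition~\ref{prop:lien}. Since $A^{1/2}$ is a linear bijection, the measure $\gamma^*:=A^{-1/2}\#\mu^* = A^{-1/2}\#(B\#\pi^*)$ satisfies $A^{1/2}\#\gamma^* = \mu^*$; by Proposition~\ref{prop:lien}, $\gamma^*$ minimizes $\mathcal{F}$ if and only if $A^{1/2}\#\gamma^* = \mu^*$ minimizes $\mathcal{G}$, and the latter was just established. Hence $\gamma^*$ is a solution of \eqref{ref:GWB}. Finally, to get the probabilistic reformulation, write $\pi^* = \mathrm{Law}(X_1,\ldots,X_p)$; by definition of $B$ in \eqref{eq:barycenter} one has $B\#\pi^* = \mathrm{Law}\bigl(\sum_{i=1}^p\lambda_i X_i\bigr)$, and pushing this forward by $A^{-1/2}$ shows that $\gamma^*$ is the law of $A^{-1/2}\bigl(\sum_{i=1}^p\lambda_i X_i\bigr)$.

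\textbf{Main obstacle.} There is no serious difficulty here: the whole content is packaged in Propositions~\ref{prop:lien} and~\ref{prop:MM-AC}, and the proof is essentially a bookkeeping assembly of the two. The only points deserving a line of care are (i) checking that the $\tilde\nu_i$ have finite second moments so that the Agueh--Carlier statements are applicable in $\mathcal{P}_2(\R^d)$, and (ii) using the invertibility of $A^{1/2}$ so that the equivalence in Proposition~\ref{prop:lien} can be read in the direction ``$\mu^*$ minimizes $\mathcal{G}$ $\Rightarrow$ $\gamma^* = A^{-1/2}\#\mu^*$ minimizes $\mathcal{F}$''.
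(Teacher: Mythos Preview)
Your proposal is correct and matches the paper's proof exactly: the paper simply states that the result is a straightforward consequence of Proposition~\ref{prop:lien} and Proposition~\ref{prop:MM-AC}, which is precisely the two-step assembly you describe. The extra care you take (second-moment check for $\tilde\nu_i$, reading Proposition~\ref{prop:lien} in the reverse direction via invertibility of $A^{1/2}$) is just harmless bookkeeping that the paper leaves implicit.
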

\proof
This is a straightforward consequence of Proposition \ref{prop:lien} and Proposition \ref{prop:MM-AC}.
\endproof

\section{Study of the dual problem  \label{sec: dual}}
In what follows, if $F$ is a linear subspace of $\R^n$, we will denote by $\Phi_2(F)$ the set of continuous functions $f:F\to\R$ such that there exists $M\geq0$ such that 
\[
|f|(x) \leq M(1+|x|^2),\qquad \forall x\in F,
\]
where $|\,\cdot\,|$ denotes the standard Euclidean norm on $\R^n.$ We will also consider the set $\Phi_{2,0}(\R^n)$ of continuous functions $f:\R^n \to \R$ such that 
\[
\frac{f(x)}{1+|x|^2} \to 0 \qquad \text{when } |x| \to +\infty.
\]
For all $i \in \llbracket 1,p \rrbracket$, we will denote by $F_i \subset \R^{d_i}$ the range of the linear operator $P_i:\R^d \to \R^{d_i}$ and consider the infimum convolution operator
\[
S_if(x)=\inf_{y\in F_i} \{\lambda_i|x- y|^2-f(y)\},\qquad x\in \R^{d_i}
\]
acting on functions $f:F_i \to \R$. Note that $S_if$ is lower semicontinuous on $\R^{d_i}$ and that $\int S_if(x)\,d\nu(x)$ makes sense in $\R \cup \{-\infty\}$ for any $\nu \in \mathcal{P}_2(\R^{d_i})$.

\begin{theorem}\label{thm:duality}The following duality identity holds
\begin{equation}\label{eq:duality}
\inf_{\gamma \in \mathcal{P}_2(\R^d)} \sum_{i=1}^p \lambda_i W_2^2(\nu_i,P_i\#\gamma) = \sup\left\{\sum_{i=1}^p \int S_ig_i \,d\nu_i : (g_1,\ldots,g_p) \in \prod_{i=1}^p\Phi_2(F_i), \sum_{i=1}^p g_i\circ P_i \geq0\right\}.
\end{equation}
Moreover, if at least one of the $F_i$ has dimension $d$, then
\begin{equation}\label{eq:dualitybis}
\inf_{\gamma \in \mathcal{P}_2(\R^d)} \sum_{i=1}^p \lambda_i W_2^2(\nu_i,P_i\#\gamma) = \sup\left\{\sum_{i=1}^p \int S_ig_i \,d\nu_i : (g_1,\ldots,g_p) \in \prod_{i=1}^p\Phi_2(F_i), \sum_{i=1}^p g_i\circ P_i =0\right\}.
\end{equation}
\end{theorem}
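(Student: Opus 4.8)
My plan is to reduce everything to the Agueh--Carlier duality theorem for the classical Wasserstein barycenter, applied to the degenerate measures $\tilde\nu_i = (A^{-1/2}P_i^T)\#\nu_i$ introduced in Proposition \ref{prop:lien}. First I would recall from \cite{agueh11} the dual formulation of the classical barycenter problem $\inf_\mu \mathcal{G}(\mu) = \inf_\mu \sum_i \lambda_i W_2^2(\tilde\nu_i,\mu)$; it reads
\[
\inf_\mu \sum_{i=1}^p \lambda_i W_2^2(\tilde\nu_i,\mu) = \sup\Bigl\{\sum_{i=1}^p \int \tilde S_i h_i\, d\tilde\nu_i : \sum_{i=1}^p h_i \geq 0\Bigr\},
\]
where $\tilde S_i h(x) = \inf_{y\in\R^d}\{\lambda_i|x-y|^2 - h(y)\}$ and the $h_i$ range over $\Phi_2(\R^d)$ (or a suitable class for which the Kantorovich-type duality of the $W_2^2(\tilde\nu_i,\cdot)$ terms holds). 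By Proposition \ref{prop:lien} the left-hand side here equals $\inf_\gamma \mathcal F(\gamma) - C$ with the explicit constant $C = \sum_i \lambda_i\int(|x|^2 - x^TP_iA^{-1}P_i^Tx)\,d\nu_i(x)$, so the whole point is to translate the right-hand side back through the linear change of variables $x\mapsto A^{-1/2}P_i^Tx$ and to absorb $C$.

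The bookkeeping step is the dictionary between a test function $g_i$ on $F_i\subset\R^{d_i}$ for \eqref{eq:duality} and a test function $h_i$ on $\R^d$ for the classical dual. The natural correspondence is $h_i(y) = g_i(P_i^TA^{1/2}y)$ up to the quadratic correction terms appearing in the computation inside the proof of Proposition \ref{prop:lien}; concretely one wants $\int \tilde S_i h_i\,d\tilde\nu_i = \int S_i g_i\,d\nu_i + (\text{a piece of }C)$, and the constraint $\sum_i h_i\geq 0$ should match $\sum_i g_i\circ P_i \geq 0$ after noticing that $h_i(A^{1/2}\gamma$-a.e. point$)$ only sees $y$ through $A^{1/2}y$, while $g_i\circ P_i$ evaluated at $\gamma$-points is $g_i(P_i(\cdot))$; since $A$ is invertible, $y\mapsto A^{1/2}y$ is a bijection of $\R^d$ and $P_i(z) = P_i A^{-1/2}(A^{1/2}z)$, so the two constraints are genuinely equivalent. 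I would carry this out by direct substitution in the infimum-convolution, using the identity $\lambda_i|A^{-1/2}P_i^T x - y|^2 = \lambda_i|x|^2 - \text{(cross term)} + y^T A^{-1/2}P_i^TP_iA^{-1/2}y$ restricted appropriately; the range of $P_i$ is exactly $F_i$, which is why the inner $\inf$ runs over $F_i$ rather than all of $\R^{d_i}$.

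For the second, sharper identity \eqref{eq:dualitybis}, the hypothesis that some $F_{i_0}$ has dimension $d$ means $P_{i_0}$ is onto $\R^{d_{i_0}} = F_{i_0}$ and injective, i.e. $d_{i_0}=d$ and $P_{i_0}$ invertible — a genuinely nondegenerate prior in the sense of \cite{abraham17}. The improvement from $\sum g_i\circ P_i\geq 0$ to $\sum g_i\circ P_i = 0$ comes from the standard observation that one may replace $g_{i_0}$ by $g_{i_0} - (\sum_i g_i\circ P_i)\circ P_{i_0}^{-1}\cdot\lambda_{i_0}^{-1}\cdots$ — more precisely, redefine $g_{i_0}$ so that the sum becomes identically zero, and check that $\int S_{i_0} g_{i_0}\,d\nu_{i_0}$ does not decrease (it can only increase, since we are subtracting a nonnegative function, which makes the infimum-convolution larger). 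One must verify the modified $g_{i_0}$ still lies in $\Phi_2(F_{i_0})$; this is where invertibility of $P_{i_0}$ is used, to guarantee $\sum_i g_i\circ P_i$ composed with $P_{i_0}^{-1}$ has the right quadratic growth.

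The main obstacle I anticipate is not the algebra but the functional-analytic care needed to state and invoke the right version of Agueh--Carlier duality: the measures $\tilde\nu_i$ are supported on the proper subspaces $A^{-1/2}P_i^T(\R^{d_i})$ of $\R^d$, hence are \emph{degenerate} (not absolutely continuous), and I must make sure the duality theorem of \cite{agueh11} — and the underlying Kantorovich duality for each $W_2^2(\tilde\nu_i,\mu)$ term with lower-semicontinuous, possibly $-\infty$-valued integrands $S_i g_i$ — applies without an absolute-continuity hypothesis, only using finiteness of second moments. I would handle this by citing the general lower-semicontinuous Kantorovich duality (e.g. \cite[Chapter 5]{villani08}) for the individual transport terms and checking that the passage to the barycentric sup is justified by the same compactness/lsc argument used in the existence proof above (Prokhorov plus lower semicontinuity of $\mathcal G$), so that no regularity of the $\tilde\nu_i$ is required.
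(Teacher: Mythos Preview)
Your overall strategy is exactly the paper's: reduce to the Agueh--Carlier dual for $\mathcal G$ via Proposition \ref{prop:lien}, then unwind the linear change of variables. The argument for \eqref{eq:dualitybis} is also the paper's (subtract $\sum_i g_i\circ P_i$ through the bijective $P_{i_0}$; note only that $\dim F_{i_0}=d$ forces $P_{i_0}:\R^d\to F_{i_0}$ to be a bijection, but $d_{i_0}$ may exceed $d$, so do not write ``$P_{i_0}$ invertible''). Your worry about degeneracy of the $\tilde\nu_i$ is sound but turns out to be harmless: the duality statement of \cite[Proposition 2.2]{agueh11} needs only second moments, so you may invoke it as a black box.

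There is, however, a genuine gap in your ``dictionary''. The correspondence $h_i(y)=g_i(P_iA^{-1/2}y)$ (your $P_i^TA^{1/2}$ does not type-check) goes the \emph{wrong way} for the hard inequality. From a feasible $(g_i)$ on $F_i$ you obtain a feasible $(h_i)$ on $\R^d$, and this only reproves the easy bound $\sup\leq\inf\mathcal F$. What you actually need is the converse: given near-optimal $(f_i)$ in the Agueh--Carlier dual (functions on all of $\R^d$ with $\sum f_i=0$, $f_i\in\Phi_{2,0}(\R^d)$), produce $(g_i)$ on $F_i$ with $\sum_i g_i\circ P_i\geq 0$ and $\sum_i\int S_ig_i\,d\nu_i\geq C+\sum_i\int S_{\lambda_i}f_i\,d\tilde\nu_i$. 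Since $P_iA^{-1/2}$ is not injective in general, there is no direct substitution; the paper's solution is to set
\[
g_i(z)=\sup\{\,\tilde f_i(y): P_iA^{-1/2}y=z\,\},\qquad \tilde f_i(y)=\lambda_i\bigl(|P_iA^{-1/2}y|^2-|y|^2\bigr)+f_i(y),
\]
a sup over fibres. This makes the constant $C$ and the constraint fall out immediately, but then one must check that each $g_i$ is continuous with quadratic growth on $F_i$; this is a separate technical lemma (Lemma \ref{lem:gi} in the paper), using that $f_i\in\Phi_{2,0}(\R^d)$ so that $\tilde f_i\to-\infty$ and the fibrewise sup is attained on a compact set. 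Your plan does not see this step, and without it the argument does not close.
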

Taking $d_1=\cdots=d_p=d$ and $P_1=\cdots=P_p = \mathrm{I}_d$, then \eqref{eq:dualitybis} gives back Agueh and Carlier dual formulation of the classical Wasserstein barycenter problem (see \cite[Proposition 2.2]{agueh11}). On the other hand, if $d_1=\cdots=d_{p-1}=1$, $d_p=d$ and $P_p=\mathrm{I}_d$, then \eqref{eq:dualitybis} gives back \cite[Theorem 2.3]{abraham17}.

\proof[Proof of Theorem \ref{thm:duality}.]
First let us show how to deduce \eqref{eq:dualitybis} from \eqref{eq:duality}. Without loss of generality one can assume that $F_1$ has dimension $d$. This implies of course that $d_1 \geq d$ and that $P_1:\R^d \to F_1$ is a bijection.  Take $(g_1,\ldots,g_p) \in \prod_{i=1}^p\Phi_2(F_i)$ such that $g:=\sum_{i=1}^p g_i\circ P_i \geq0$, and define $\bar{g}_1 = g_1 - g\circ P_1^{-1}$ and $\bar{g}_i=g_i$ for all $i\geq2$. Then, it is clear that $(\bar g_1,\ldots, \bar g_p) \in \prod_{i=1}^p\Phi_2(F_i)$ and $\sum_{i=1}^p \bar{g}_i\circ P_i=0$. Also, for all $x\in F_1$,
\[
S_1\bar{g}_1(x) = \inf_{y\in F_1} \{\lambda_1|x- y|^2-g_1(y)+g(P_1^{-1}y)\} \geq S_1g_1(x).
\]
From this follows that the right hand-side of \eqref{eq:dualitybis} is greater than or equal to the right hand-side of \eqref{eq:duality}. Since the other inequality is obvious, this completes the proof of \eqref{eq:dualitybis}.

Now, let us prove \eqref{eq:duality}.
First note that, if $(g_1,\ldots,g_p) \in \prod_{i=1}^p\Phi_2(F_i)$ are such that $\sum_{i=1}^p g_i(P_ix) \geq0$ for all $x\in \R^d$, then for any $\gamma \in \mathcal{P}_2(\R^d)$, it follows from (the easy sense of) Kantorovich duality formula that
\[
\sum_{i=1}^p \lambda_i W_2^2(\nu_i,P_i\#\gamma) \geq \sum_{i=1}^p\left( \int S_ig_i \,d\nu_i + \int g_i(P_i)\,d\gamma\right) \geq \sum_{i=1}^p \int S_ig_i \,d\nu_i.
\]
So, we get the following bound
\[
\inf_{\gamma \in \mathcal{P}_2(\R^d)} \sum_{i=1}^p \lambda_i W_2^2(\nu_i,P_i\#\gamma) \geq \sup\left\{\sum_{i=1}^p \int S_ig_i \,d\nu_i : (g_1,\ldots,g_p) \in \prod_{i=1}^p \Phi_2(F_i), \sum_{i=1}^p g_i\circ P_i \geq0\right\}.
\]
Let us now prove the converse inequality.
We consider first the particular case where the matrix $A$ defined in \eqref{eq:matrixA} is invertible. 
Recall the following identity proved in Proposition \ref{prop:lien},
\[
\mathcal{F}(\gamma) = C + \mathcal{G}(A^{1/2}\# \gamma),\qquad \forall \gamma \in \mathcal{P}_2(\R^d),
\]
where $C =  \sum_{i=1}^p \lambda_i \int |x|^2- x^TP_iA^{-1}P_i^Tx d\nu_i(x).$
Since $A^{1/2}$ is invertible, it thus holds
\[
\inf_{\gamma \in \mathcal{P}_2(\R^d)} \mathcal{F}(\gamma) = C+ \inf_{\mu \in \mathcal{P}_2(\R^d)} \mathcal{G}(\mu).
\]
According to the duality result of Agueh and Carlier \cite[Proposition 2.2]{agueh11} applied to the probability measures $\tilde{\nu}_i = (A^{-1/2}P_i^T)\# \nu_i$, $i\in \llbracket 1,p \rrbracket$, it holds
\[
\inf_{\mu \in \mathcal{P}_2(\R^d)} \mathcal{G}(\mu)= \sup\left\{ \sum_{i=1}^p \int S_{\lambda_i} f_i d\tilde{\nu}_i : \sum_{i=1}^p f_i=0, f_1,\ldots, f_p \in \Phi_{2,0}(\R^d)\right\}, 
\]
where, for any $\lambda>0$ and $f:\R^d \to \R$, we set
\[
S_{\lambda} f(x) = \inf_{y\in \R^d} \left\{ \lambda |x-y|^2-f(y)\right\},\qquad x\in \R^d.
\]
Observe that, for all $i \in \llbracket 1,p \rrbracket$ and $f_i \in \Phi_{2,0}(\R^d)$,
\[
\int_{\R^d} S_{\lambda_i} f_i d\tilde{\nu}_i = \int_{\R^{d_i}} S_{\lambda_i} f_i(A^{-1/2}P_i^Tx) \,d\nu_i(x)
\]
and that, for all $x\in \R^{d_i}$,
\begin{align*}
S_{\lambda_i} f_i(A^{-1/2}P_i^Tx)& = \inf_{y\in \R^d} \{\lambda_i |A^{-1/2}P_i^Tx-y|^2-f(y)\}\\
& = \lambda_i( |A^{-1/2}P_i^Tx|^2 -|x|^2) + \inf_{y\in \R^d} \{\lambda_i|x- P_iA^{-1/2}y|^2-\tilde{f}_i(y)\}\\
& = \lambda_i( |A^{-1/2}P_i^Tx|^2 -|x|^2) + \inf_{z\in \R^{d_i}} \{\lambda_i|x- z|^2-g_i(z)\},
\end{align*}
where 
\[
\tilde{f}_i(y) = \lambda_i(|P_iA^{-1/2}y|^2-|y|^2) + f_i(y),\qquad y\in \R^d
\]
and
\[
g_i(z) = \sup \{\tilde{f}_i(y) : P_iA^{-1/2}y = z\},\qquad z\in \R^{d_i}
\]
with the convention $\sup \emptyset = - \infty$.
Note in particular that $g_i(z) = -\infty$ if $z \notin F_i$ and so 
\[
\inf_{z\in \R^{d_i}} \{\lambda_i|x- z|^2-g_i(z)\} = S_ig_i(x),\qquad \forall x\in \R^d
\]
using the same notation $g_i$ for the restriction of $g_i$ to $F_i$.
Therefore, 
\[
\inf_{\gamma \in \mathcal{P}_2(\R^d)} \mathcal{F}(\gamma) = \sup\left\{ \sum_{i=1}^p \int S_{i} g_i d\nu_i : \sum_{i=1}^p f_i=0, f_1,\ldots, f_p \in \Phi_{2,0}(\R^d)\right\}.
\]
Note that
\[
\sum_{i=1}^p g_i(P_ix) \geq \sum_{i=1}^p \tilde{f}_i(A^{1/2}x) = \sum_{i=1}^p  \lambda_i(|P_ix|^2-|A^{1/2}x|^2)+f_i(A^{1/2}x) =0.
\]
Lemma \ref{lem:gi} below shows that for all $i\in \llbracket 1,p \rrbracket$, $g_i \in \Phi_2(F_i)$. Therefore, we get
\[
\inf_{\gamma \in \mathcal{P}_2(\R^d)} \mathcal{F}(\gamma) \leq \sup\left\{\sum_{i=1}^p \int S_ig_i \,d\nu_i : (g_1,\ldots,g_p) \in \prod_{i=1}^p \Phi_2(F_i), \sum_{i=1}^p g_i\circ P_i \geq0\right\},
\]
which gives \eqref{eq:duality}, in the case the matrix $A$ is assumed to be invertible.\\
Let us finally consider the case of a general matrix $A$. According to the discussion at the end of Section \ref{sec:technical} and using the notation introduced there, we know that 
\[
\inf_{\gamma \in \mathcal{P}_2(\R^d)}\mathcal{F}(\gamma) = \inf_{\bar{\gamma} \in \mathcal{P}_2(\R^{\bar{d}})}\bar{\mathcal{F}}(\bar{\gamma}),
\]
where $\bar{d} \leq d$ and $\bar{\mathcal{F}} : \mathcal{P}_2(\R^{\bar{d}}) \to \R^+$ is defined in \eqref{eq:barF}. By construction, the matrix $\bar{A}$ associated to the $\bar{P}_i's$ is invertible. Therefore, using what precedes one concludes that
\begin{align*}
\inf_{\bar{\gamma} \in \mathcal{P}_2(\R^{\bar{d}})}\bar{\mathcal{F}}(\bar{\gamma}) &\leq \sup\left\{\sum_{i=1}^p \int S_ig_i \,d\nu_i : (g_1,\ldots,g_p) \in \prod_{i=1}^p \Phi_2(F_i), \sum_{i=1}^p g_i\circ \bar{P}_i \geq0\right\}\\
& = \sup\left\{\sum_{i=1}^p \int S_ig_i \,d\nu_i : (g_1,\ldots,g_p) \in \prod_{i=1}^p \Phi_2(F_i), \sum_{i=1}^p g_i\circ P_i \geq0\right\},
\end{align*}
where the second line comes from the fact that $g:=\sum_{i=1}^p g_i\circ P_i \geq 0$ if and only if $g$ is non-negative on $K^\perp$, where $K = \bigcap_{i=1}^p \mathrm{Ker}(P_i) = \mathrm{Im}(Q)$ with $Q$ such that $\bar{P}_i = P_iQ$, for all $i\in \llbracket 1,p \rrbracket$. This completes the proof.
\endproof
\begin{lemma}\label{lem:gi}
Let $\lambda>0$ and $f \in \Phi_{2,0}(\R^d)$ and consider the function 
\[
g(z) = \lambda |z|^2 + \sup_{Ry = z} \{f(y) - \lambda |y|^2\},\qquad z\in F,
\]
where $R:\R^d \to \R^{m}$ is some linear map and $F \subset \R^m$ is the image of $R$.
Then $g$ belongs to $\Phi_2(F)$.
\end{lemma}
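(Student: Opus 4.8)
The plan is to parametrize the fibers of $R$ by a fixed linear section and reduce everything to a cleaner sup. Write $N=\Ker R$; the restriction of $R$ to $N^\perp$ is a linear isomorphism onto $F$, and let $T:F\to N^\perp\subset\R^d$ denote its inverse, which is linear. Then for $z\in F$ the fiber $\{y:Ry=z\}$ is exactly $Tz+N$, and since $Tz\perp k$ for every $k\in N$ we have $|Tz+k|^2=|Tz|^2+|k|^2$. Substituting $y=Tz+k$ into the definition of $g$ gives
\[
g(z)=\lambda|z|^2-\lambda|Tz|^2+\psi(z),\qquad \psi(z):=\sup_{k\in N}\bigl\{f(Tz+k)-\lambda|k|^2\bigr\}.
\]
Since $z\mapsto\lambda|z|^2-\lambda|Tz|^2$ is continuous with at most quadratic growth (as $T$ is linear, $|Tz|\le\|T\|\,|z|$), it suffices to prove that $\psi$ is finite, has at most quadratic growth, and is continuous on $F$.

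The only real input is the growth of $f$: from $f\in\Phi_{2,0}(\R^d)$ and continuity one gets, for every $\epsilon>0$, a constant $C_\epsilon\ge0$ with $|f(y)|\le\epsilon|y|^2+C_\epsilon$ for all $y\in\R^d$. Applying this with $\epsilon=\lambda/4$ together with $|Tz+k|^2\le 2|Tz|^2+2|k|^2$ yields $f(Tz+k)-\lambda|k|^2\le\tfrac{\lambda}{2}|Tz|^2+C_{\lambda/4}-\tfrac{\lambda}{2}|k|^2$, so the map $k\mapsto f(Tz+k)-\lambda|k|^2$ is continuous, bounded above, and tends to $-\infty$ as $|k|\to\infty$; hence $\psi(z)$ is finite and the supremum is attained. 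Taking $k=0$ gives $\psi(z)\ge f(Tz)\ge -\lambda|Tz|^2-C_\lambda$, and combining with the upper bound above and $|Tz|\le\|T\|\,|z|$ produces a constant $M$ with $|\psi(z)|\le M(1+|z|^2)$, hence the required quadratic growth of $g$.

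For continuity I would treat lower and upper semicontinuity of $\psi$ separately. Lower semicontinuity is immediate, since for each fixed $k$ the function $z\mapsto f(Tz+k)-\lambda|k|^2$ is continuous and a pointwise supremum of continuous functions is lower semicontinuous. For upper semicontinuity, take $z_n\to z$ in $F$ and pick maximizers $k_n\in N$ with $\psi(z_n)=f(Tz_n+k_n)-\lambda|k_n|^2$. The points $Tz_n$ lie in a fixed compact set, so $\psi(z_n)\ge f(Tz_n)$ is bounded below; inserting this into the quadratic bound on $f$ (again with $\epsilon=\lambda/4$) forces $(k_n)$ to be bounded. Passing to a subsequence realizing $\limsup_n\psi(z_n)$ and then to a further subsequence along which $k_n\to k^\star\in N$ (closed), continuity of $f$ and $T$ gives $\psi(z_n)\to f(Tz+k^\star)-\lambda|k^\star|^2\le\psi(z)$, i.e.\ $\limsup_n\psi(z_n)\le\psi(z)$. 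Thus $\psi$, and hence $g$, is continuous on $F$, which finishes the proof. The only delicate point is this last compactness step, ensuring the fiberwise maximizers do not escape to infinity; the rest is bookkeeping with the $o(|y|^2)$ growth of $f$.
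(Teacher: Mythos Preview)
Your argument is correct and follows essentially the same route as the paper: choose a linear section of $R$, obtain the two-sided quadratic bounds from the $o(|y|^2)$ growth of $f$, and establish continuity by a compactness argument on the fiberwise maximizers. Your choice of the \emph{orthogonal} complement $N^\perp$ (yielding the Pythagorean split $|Tz+k|^2=|Tz|^2+|k|^2$) and the observation that lower semicontinuity is free as a supremum of continuous functions are small but pleasant streamlinings of the paper's proof, which uses an arbitrary complement and handles the $\liminf$ by explicitly shifting a maximizer.
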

\proof
Since $f \in \Phi_{2,0}(\R^d)$, $f(y) - \lambda |y|^2 \to - \infty$ as $|y| \to \infty.$ Therefore, there exists $M_1$ such that $f(y) - \lambda |y|^2 \leq M_1$ for all $y\in \R^d$. Thus 
\[
g(z) \leq \lambda |z|^2 + M_1,\qquad \forall z\in F.
\]
On the other hand, denote by $K$ the kernel of $R$ and by $E$ a linear subspace such that $K \oplus E = \R^d.$ The linear map $S : E \to F : x \mapsto Rx$ is then bijective. Let $\|S^{-1}\|$ denote the operator norm of $S^{-1}$ : $\|S^{-1}\| = \sup \{|S^{-1} z|/|z| : z \in F\setminus \{0\}\}.$ For all $z \in F$, it then holds 
\begin{align*}
g(z) \geq \lambda |z|^2 + f(S^{-1}z) - \lambda |S^{-1}z|^2 &\geq -M_2(1+|S^{-1}z|^2) -\lambda |S^{-1}z|^2\\
&\geq -M_2(1+\|S^{-1}\||z|^2) -\lambda \|S^{-1}\||z|^2,
\end{align*}
where $M_2 = \sup \{\frac{|f(x)|}{1+|x|^2} : x \in \R^d\}$. Putting everything together shows that $\frac{|g|}{1+|\,\cdot\,|^2}$ is bounded on $F$. To conclude, let us prove that $g$ is continuous on $F$. Let $z_n$ be some sequence in $F$ converging to some $z \in F$. The sequence $g(z_n)$ is bounded ; define $m = \inf_{n\geq0} g(z_n)$. Since $f(y) - \lambda |y|^2 \to - \infty$ as $|y| \to \infty$, there exists some closed ball $B \subset \R^d$ such that $f(y) - \lambda |y|^2 < m$ when $y \notin B$. Therefore, the supremum defining $g(z_n)$ can be restricted to $B$ :
\[
g(z_n) = \lambda |z_n|^2 + \sup_{Ry = z_n, y\in B} \{f(y) - \lambda |y|^2\}.
\]
Let $\ell = \limsup_{n\to \infty}g(z_n)$. Extracting a sequence if necessary, one can assume without loss of generality that $g(z_n) \to \ell$. 
By compactness of $B$, there exists a sequence $y_n \in B$ such that $Ry_n = z_n$ and
\[
g(z_n) = \lambda |z_n|^2 + f(y_n) - \lambda |y_n|^2.
\]
Again, by compactness of $B$, one can assume without loss of generality that $y_n$ converges to some $\bar{y} \in B$ such that $R\bar{y}=z$. Letting $n\to \infty$, one concludes that
\[
\ell =\limsup_{n\to \infty} g(z_n) = \lambda |z|^2 + f(\bar{y}) - \lambda |\bar{y}|^2 \leq g(z).
\]
On the other hand, if $y^*\in B$ is such that $Ry^*=z$ and $g(z) =  \lambda |z|^2 + f(y^*) - \lambda |y^*|^2$, then defining $a_n = S^{-1}(z_n-z)$, we see that
\[
g(z_n) \geq  \lambda |z_n|^2 + f(y^*+a_n) - \lambda |y^*+a_n|^2.
\]
Since $a_n \to 0$ as $n \to \infty$, we see that
\[
\liminf_{n\to \infty} g(z_n) \geq  \lambda |z|^2 + f(y^*) - \lambda |y^*|^2 = g(z) \geq \ell.
\]
From this, we conclude that $\liminf_{n\to \infty} g(z_n) = \limsup_{n\to \infty} g(z_n)= g(z)$. This shows the continuity of $g$ and completes the proof.
\endproof

\section{Solutions of~\eqref{ref:GWB} for Gaussian distributions}
\label{sec:gaussian_gwb}

In this section, we consider the case where $\nu_1,\ldots,\nu_p$ are Gaussian distributions and we show below that the generalized Wasserstein barycenter can also be a Gaussian distribution, and how its parameters can be computed in practice. 

First let us see how to obtain the expectation of the barycenter. Recall that the quadratic transport cost between two distributions $\eta$ and
$\eta'$ can be written
\[
W_2^2(\eta,\eta') = |\mathbb{E}[\eta]-\mathbb{E}[\eta']|^2+W^2_2\left(\eta-\mathbb{E}[\eta],\eta'-\mathbb{E}[\eta']\right).
\]
This implies that if $\gamma$ is solution of \eqref{ref:GWB} for the
probability measures 
$\nu_1,\ldots,\nu_p$ and weights $\lambda_1,\ldots,\lambda_p$, then
\[
\mathbb{E}[\gamma] = \inf_{x} \sum_i \lambda_i|P_i(x)-\mu_i|^2 = B_{gen}(m_1,\ldots,m_p),
\]
where $m_1,\ldots,m_p$ denote the expectations of the measures $\nu_1,\ldots,\nu_p$.

For this reason, we will assume in all the section that $\nu_1,\ldots,\nu_p$ are centered Gaussian distributions.

\subsection{Existence and characterization of Gaussian solutions of ~\eqref{ref:GWB} for Gaussian distributions}
In this section, we assume that the matrix $A$ defined in \eqref{eq:matrixA} is invertible.

\begin{proposition}\label{prop:existence_Gaussian}
If $\nu_1,\ldots, \nu_p$ are centered Gaussian probability measures, then \eqref{ref:GWB} admits at least one centered Gaussian solution. 
\end{proposition}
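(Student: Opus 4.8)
The plan is to leverage the reduction of Proposition~\ref{prop:lien}. Since $A$ is invertible, a measure $\gamma^*$ solves \eqref{ref:GWB} if and only if $\mu^* = A^{1/2}\#\gamma^*$ minimizes $\mathcal{G}(\mu) = \sum_{i=1}^p \lambda_i W_2^2(\tilde\nu_i,\mu)$, where $\tilde\nu_i = (A^{-1/2}P_i^T)\#\nu_i$. As each $\nu_i$ is a centered Gaussian and $A^{-1/2}P_i^T$ is linear, each $\tilde\nu_i$ is again a centered Gaussian on $\R^d$, possibly degenerate. Since $A^{-1/2}$ is linear and invertible it maps centered Gaussians to centered Gaussians, so it is enough to produce a centered Gaussian minimizer of $\mathcal{G}$ and take its image by $A^{-1/2}$.

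First I would pick an arbitrary minimizer $\mu^*$ of $\mathcal{G}$, which exists by the existence result established above applied to $\tilde\nu_1,\ldots,\tilde\nu_p$. I would then check that $\mu^*$ is centered: using the identity $W_2^2(\eta,\eta') = |\mathbb{E}[\eta]-\mathbb{E}[\eta']|^2 + W_2^2(\eta-\mathbb{E}[\eta],\eta'-\mathbb{E}[\eta'])$ recalled at the beginning of this section, every solution $\gamma$ of \eqref{ref:GWB} satisfies $\mathcal{F}(\gamma) = (\mathbb{E}[\gamma])^T A\,\mathbb{E}[\gamma] + \mathcal{F}(\gamma-\mathbb{E}[\gamma])$, so positive definiteness of $A$ forces $\mathbb{E}[\gamma]=0$, whence $\mathbb{E}[\mu^*]=A^{1/2}\mathbb{E}[\gamma^*]=0$. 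Let $\mu_G$ be the centered Gaussian with covariance $\Sigma := \mathrm{Cov}(\mu^*)$. The main ingredient is the classical fact that $W_2^2(\eta,\eta') \geq W_2^2(\eta_G,\eta'_G)$ for all $\eta,\eta'\in\mathcal{P}_2(\R^d)$, where $\eta_G,\eta'_G$ denote the Gaussians with the same means and covariances as $\eta,\eta'$. For centered $\eta,\eta'$ this follows from $W_2^2(\eta,\eta') = \mathrm{tr}(\mathrm{Cov}(\eta))+\mathrm{tr}(\mathrm{Cov}(\eta')) - 2\sup_{\pi\in\Pi(\eta,\eta')}\int x\cdot y\,d\pi$ together with the observation that the cross-covariance $C$ of any coupling $\pi$ makes $\begin{pmatrix}\mathrm{Cov}(\eta) & C\\ C^T & \mathrm{Cov}(\eta')\end{pmatrix}$ positive semidefinite, while conversely any such block-positive-semidefinite $C$ is the cross-covariance of a Gaussian coupling; hence replacing $\eta,\eta'$ by $\eta_G,\eta'_G$ can only increase that supremum, which is exactly $W_2^2(\eta_G,\eta'_G)$ once the traces are subtracted. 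Applying this with $\eta=\tilde\nu_i$ (already Gaussian, so $\eta_G=\tilde\nu_i$) and $\eta'=\mu^*$ (so $\eta'_G=\mu_G$, both centered with covariance $\Sigma$) gives $W_2^2(\tilde\nu_i,\mu_G)\leq W_2^2(\tilde\nu_i,\mu^*)$ for all $i$, hence $\mathcal{G}(\mu_G)\leq\mathcal{G}(\mu^*)=\min\mathcal{G}$. So $\mu_G$ is a centered Gaussian minimizer of $\mathcal{G}$, and by Proposition~\ref{prop:lien} the measure $\gamma^*:=A^{-1/2}\#\mu_G$ is a centered Gaussian solution of \eqref{ref:GWB}.

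The one point deserving care is the $W_2$-optimality of Gaussians in a class with prescribed mean and covariance: textbook statements usually assume nondegeneracy, while here the $\tilde\nu_i$ may be degenerate, so I would spell out the block-matrix computation above, which is insensitive to degeneracy. An alternative built on the machinery already available is to note, via the last proposition of Section~\ref{sec: multimarginal}, that it suffices to produce a Gaussian optimal plan for the classical multi-marginal problem with marginals $\tilde\nu_1,\ldots,\tilde\nu_p$ and cost $|x-B(x)|^2$; but proving such a plan optimal leads back to the same covariance estimate, so the direct route seems cleanest.
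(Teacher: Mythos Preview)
Your argument is correct and takes a genuinely different route from the paper. The paper proceeds by regularization: it perturbs each (possibly degenerate) $\tilde\nu_i$ into a non-degenerate Gaussian $\tilde\nu_{i,n}$, invokes the Agueh--Carlier result that the classical Wasserstein barycenter of non-degenerate Gaussians is Gaussian, and then passes to the limit via a tightness/lower-semicontinuity argument. You instead start from an arbitrary minimizer $\mu^*$ of $\mathcal{G}$ and replace it by the centered Gaussian $\mu_G$ with the same covariance, using the Gelbrich-type inequality $W_2^2(\eta,\eta')\geq W_2^2(\eta_G,\eta'_G)$, which you justify directly via the block-covariance characterization of cross-covariances of couplings; since the $\tilde\nu_i$ are already Gaussian this immediately gives $\mathcal{G}(\mu_G)\leq\mathcal{G}(\mu^*)$. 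Your approach is shorter, avoids the approximation machinery entirely, and is robust to degeneracy of the $\tilde\nu_i$; it also yields the extra information that the Gaussianization of \emph{any} minimizer is again a minimizer. The paper's approximation route, on the other hand, dovetails with the next proposition (the fixed-point equation for the covariance), since the approximating barycenters $\mu_n$ satisfy Agueh--Carlier's fixed-point equation at each step and one can pass to the limit; your method would require a separate argument to recover that equation.
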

\proof
According to Proposition \ref{prop:lien}, it is enough to show that the functional $\mathcal{G}$ admits at least a Gaussian minimizer $\mu^*$. Note that the probability measures $\tilde{\nu}_i$ are (in general degenerate) Gaussian measures as linear transformations of Gaussian measures. To apply Agueh and Carlier results (who consider only non degenerate Gaussian distribution), let us add a small noise to regularize the $\tilde{\nu}_i.$
For all $i\in \llbracket 1,p \rrbracket$, let us thus consider $\tilde{\nu}_{i,n} = \mathrm{Law}(X_i + \frac{1}{n} Z)$, $n\geq 1$, where $X_i \sim \tilde{\nu}_i$ and $Z$ is a standard Gaussian random vector independent of the $X_i$'s. The probability measures $\tilde{\nu}_{1,n},\ldots,\tilde{\nu}_{p,n}$ are Gaussian and absolutely continuous. According to \cite[Theorem 6.1]{agueh11}, the function
\[
\mathcal{G}_n(\mu) =  \sum_{i=1}^p \lambda_i W_2^2(\tilde{\nu}_{i,n}, \mu),\qquad \mu \in  \mathcal{P}_2(\R^d)
\]
attains its minimum at a unique point $\mu_n$, which is a Gaussian probability measure. Fix some measure $\nu_o \in \mathcal{P}_2(\R^d)$. Since $\tilde{\nu}_{i,n} \to \tilde{\nu}_i$ as $n \to \infty$ for the $W_2$ metric, one sees that $\mathcal{G}_n (\nu_o) \to \mathcal{G}(\nu_o)$. In particular, $M := \sup_{n\geq 1}  \mathcal{G}_n(\nu_o) <+\infty$. Since $\mu_n$ is optimal, one gets that $ \mathcal{G}_n (\mu_n) \leq M$ for all $n\geq 1$.
As
\[
\int |x|^2 \,d\mu_n(x) \leq 2W_2^2(\tilde{\nu}_{i,n}, \mu_n) + 2 \int |x|^2\,d\tilde{\nu}_{i,n}
\]
one deduces that
\[
\int |x|^2 \,d\mu_n(x)  \leq 2  \mathcal{G}_n(\mu_n) + 2 \sum_{i=1}^p \lambda_i \int |x|^2\,d\tilde{\nu}_{i,n} \leq 2(M+M') 
\]
where $M' = \sup_{1 \leq i \leq p}\sup_{n\geq 1}  \int |x|^2\,d\tilde{\nu}_{i,n}.$ In particular, we see that the sequence $(\mu_n)_{n\geq 1}$ is tight. Therefore, according to Prokhorov theorem, one can find a sub-sequence $n' \to \infty$ such that $\mu_{n'} \to \mu^*$, as $n' \to \infty.$ Since $\mu^*$ is a limit of Gaussian measures, it is itself Gaussian. Finally, let us show that $\mu^*$ is optimal. For any $\mu$ it holds 
\[
 \sum_{i=1}^p \lambda_i W_2^2(\tilde{\nu}_{i,n'}, \mu_{n'})\leq  \sum_{i=1}^p \lambda_i W_2^2(\tilde{\nu}_{i,n'}, \mu)
\]
so, letting $n' \to \infty$, it follows from the lower semicontinuity of $W_2$ that
\[
 \sum_{i=1}^p \lambda_i W_2^2(\tilde{\nu}_i, \mu^*)\leq  \sum_{i=1}^p \lambda_i W_2^2(\tilde{\nu}_i, \mu)
\]
this completes the proof.
\endproof

\begin{proposition}
If for all $i \in \llbracket 1,p \rrbracket$, $\nu_i$ is a centered Gaussian probability measure on $\R^{d_i}$ with covariance matrix $S_i$, then the \eqref{ref:GWB} admits at least one centered Gaussian solution $\gamma^*$ whose covariance matrix $S$ satisfies the equation
\begin{equation}\label{eq:pointfixe}
A^{1/2}SA^{1/2}=\sum_{i=1}^p \lambda_i \left((A^{1/2}SA^{1/2})^{1/2} A^{-1/2}(P_i^TS_iP_i)A^{-1/2}   (A^{1/2}SA^{1/2})^{1/2} \right)^{1/2}.
\end{equation}
If $S$ is invertible, it also satisfies the following equations
\begin{equation}\label{eq:pointfixe2}
S^{1/2}AS^{1/2}=\sum_{i=1}^p \lambda_i (  S^{1/2} P_i^T S_i P_i S^{1/2} )^{1/2}
\end{equation}
and
\begin{equation}\label{eq:pointfixe3}
S^{1/2} = A^{-1/2} \left( A^{1/2}   \sum_{i=1}^p \lambda_i (  S^{1/2} P_i^T S_i P_i S^{1/2} )^{1/2}          A^{1/2}\right)^{1/2} A^{-1/2}.
\end{equation}
\label{prop:gaussian_solution}
\end{proposition}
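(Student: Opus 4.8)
The plan is to reduce everything, via Proposition~\ref{prop:lien}, to the classical Wasserstein barycenter of the centered Gaussians $\tilde{\nu}_i=(A^{-1/2}P_i^T)\#\nu_i$, which are Gaussian with covariance $\tilde{S}_i=A^{-1/2}P_i^TS_iP_iA^{-1/2}$, and then to rewrite the known Gaussian fixed-point equation in the three asserted forms. Throughout I assume, as in the statement, that $A$ is invertible.

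First I would produce a centered Gaussian solution. By Proposition~\ref{prop:existence_Gaussian} (or, more precisely, by the regularization argument in its proof) the functional $\mathcal{G}$ of \eqref{eq:G} has a centered Gaussian minimizer $\mu^*=N(0,\Sigma)$. Along the approximating sequence $\mu_n=N(0,\Sigma_n)$ of that proof, the Agueh--Carlier characterization of the barycenter of \emph{nondegenerate} Gaussians \cite[Theorem 6.1]{agueh11} gives $\Sigma_n=\sum_i\lambda_i(\Sigma_n^{1/2}\tilde{S}_{i,n}\Sigma_n^{1/2})^{1/2}$ with $\tilde{S}_{i,n}=\tilde{S}_i+n^{-2}\mathrm{I}_d$. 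Since the $\Sigma_n$ are bounded (as shown there), I may extract a subsequence along which $\Sigma_{n'}\to\Sigma$, and then $\mu^*=N(0,\Sigma)$; as the map $(M,B_1,\dots,B_p)\mapsto\sum_i\lambda_i(M^{1/2}B_iM^{1/2})^{1/2}$ is continuous on tuples of positive semidefinite matrices, passing to the limit yields $\Sigma=\sum_i\lambda_i(\Sigma^{1/2}\tilde{S}_i\Sigma^{1/2})^{1/2}$. Then $\gamma^*:=A^{-1/2}\#\mu^*$ is, by Proposition~\ref{prop:lien}, a solution of \eqref{ref:GWB}; it is a centered Gaussian, and its covariance $S$ satisfies $\Sigma=A^{1/2}SA^{1/2}$. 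Substituting $\Sigma=A^{1/2}SA^{1/2}$ and $\tilde{S}_i=A^{-1/2}P_i^TS_iP_iA^{-1/2}$ into the limiting equation gives \eqref{eq:pointfixe} verbatim.

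For \eqref{eq:pointfixe2}, assuming $S$ (hence $A^{1/2}S^{1/2}$) invertible, I would use the polar decomposition $A^{1/2}S^{1/2}=\Sigma^{1/2}U$ with $U$ orthogonal, which is legitimate since $(A^{1/2}S^{1/2})(A^{1/2}S^{1/2})^T=A^{1/2}SA^{1/2}=\Sigma$. Transposing gives $\Sigma^{1/2}=US^{1/2}A^{1/2}$, hence $\Sigma^{1/2}A^{-1/2}=US^{1/2}$ and $A^{-1/2}\Sigma^{1/2}=S^{1/2}U^T$, so that $\Sigma^{1/2}\tilde{S}_i\Sigma^{1/2}=U(S^{1/2}P_i^TS_iP_iS^{1/2})U^T$ and therefore $(\Sigma^{1/2}\tilde{S}_i\Sigma^{1/2})^{1/2}=U(S^{1/2}P_i^TS_iP_iS^{1/2})^{1/2}U^T$. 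Writing also $\Sigma=\Sigma^{1/2}\Sigma^{1/2}=US^{1/2}AS^{1/2}U^T$ and plugging both expressions into the equation of the previous step, the orthogonal factor $U$ cancels by conjugation, leaving \eqref{eq:pointfixe2}. Finally, for \eqref{eq:pointfixe3} I would let $R$ be the right-hand side of \eqref{eq:pointfixe2}, so $R=S^{1/2}AS^{1/2}$; conjugating by $A^{1/2}$ gives $A^{1/2}RA^{1/2}=(A^{1/2}S^{1/2}A^{1/2})^2$, and since $A^{1/2}S^{1/2}A^{1/2}$ is symmetric and positive semidefinite it is the positive square root of $A^{1/2}RA^{1/2}$; conjugating back by $A^{-1/2}$ then yields \eqref{eq:pointfixe3}.

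The main obstacle is the first step: the Agueh--Carlier fixed-point characterization is stated only for nondegenerate Gaussians, whereas the $\tilde{\nu}_i$ (and possibly $\mu^*$) are typically degenerate, so the equation must be recovered through the regularization-and-limit argument, with care taken to pass the fixed-point identity to the limit, which rests on the boundedness of the covariances $\Sigma_n$ already obtained in the proof of Proposition~\ref{prop:existence_Gaussian}. Once that is in place, steps two and three are elementary matrix algebra, the only delicate points being the use of the polar decomposition to eliminate $U$ and the observation that $A^{1/2}S^{1/2}A^{1/2}$ is symmetric positive semidefinite.
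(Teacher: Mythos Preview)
Your argument is correct. The first step (existence of a Gaussian minimizer and derivation of \eqref{eq:pointfixe} by passing the Agueh--Carlier fixed-point identity to the limit along the regularizing sequence) is exactly the paper's proof, and your derivation of \eqref{eq:pointfixe3} from \eqref{eq:pointfixe2} coincides with the paper's as well: both amount to observing that $A^{1/2}S^{1/2}A^{1/2}$ is the unique positive semidefinite square root of $A^{1/2}RA^{1/2}$, which the paper isolates as Lemma~\ref{lemma:squareroot}.

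Where you genuinely diverge is in the passage from \eqref{eq:pointfixe} to \eqref{eq:pointfixe2}. The paper introduces the two symmetric positive semidefinite maps $T_i$ and $R_i$ of \eqref{eq:Ti}--\eqref{eq:Ri}, checks that both solve the Riccati-type equation $XKX=\tilde K_i$ with $K=A^{1/2}SA^{1/2}$ positive definite, and invokes Lemma~\ref{lemma:squareroot} (uniqueness of the positive semidefinite solution) to conclude $T_i=R_i$; substituting one for the other turns \eqref{eq:pointfixe} into \eqref{eq:pointfixe2}. Your route via the polar decomposition $A^{1/2}S^{1/2}=\Sigma^{1/2}U$ is a clean alternative: it avoids introducing the auxiliary maps $T_i,R_i$ and the Riccati lemma, and makes the orthogonal conjugation explicit, so that the identity $(\Sigma^{1/2}\tilde S_i\Sigma^{1/2})^{1/2}=U(S^{1/2}P_i^TS_iP_iS^{1/2})^{1/2}U^T$ drops out directly. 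The paper's approach, on the other hand, has the conceptual advantage that $T_i$ and $R_i$ are both recognizable as the optimal transport map from $\mathcal N(0,K)$ to $\mathcal N(0,\tilde K_i)$, so their equality is not just algebra but an instance of Brenier uniqueness. Either way the computation is short once $S$ is assumed invertible.
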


\proof
Let us follow the proof of the preceding result and denote by $\tilde{K}_{i,n}$ and $K_n$ the covariance matrices of $\tilde{\nu}_{i,n'}$ and $\mu_{n'}$ (converging subsequences). According to \cite[Theorem 6.1]{agueh11}, it holds
\[
K_n=\sum_{i=1}^p \lambda_i \left(K_n^{1/2} (\tilde{K}_{i,n})K_n^{1/2}\right)^{1/2}.
\]
Denoting by $K$ the covariance matrix of $\mu^*$ and $\tilde{K}_i$ the covariance matrix of $\tilde{\nu}_i$, $i\in \llbracket 1,p \rrbracket$, one knows that $K_n \to K$ and  $\tilde{K}_{i,n} \to \tilde{K}_i = A^{-1/2}P_i^TS_iP_iA^{-1/2}$. Since the map $M \mapsto M^{1/2}$ is continuous on the space of semi-definite symmetric matrices, one concludes that $K$ satisfies the equation
\[
K=\sum_{i=1}^p \lambda_i \left(K^{1/2} (\tilde{K}_{i})K^{1/2}\right)^{1/2}.
\]
Since $S = A^{-1/2}KA^{-1/2}$, one gets
\[
S= \sum_{i=1}^p \lambda_i A^{-1/2}\left((A^{1/2}SA^{1/2})^{1/2} (A^{-1/2}P_i^TS_iP_iA^{-1/2})(A^{1/2}SA^{1/2})^{1/2}\right)^{1/2}A^{-1/2}
\]
which completes the proof of~\eqref{eq:pointfixe}.

Now, observe that if $S$ is invertible, the linear maps 
\begin{equation}\label{eq:Ti}
T_i = A^{-1/2} S^{-1/2}  (  S^{1/2} P_i^T S_i P_i S^{1/2} )^{1/2} S^{-1/2} A^{-1/2}
\end{equation}
and
\begin{equation}\label{eq:Ri}
R_i = (A^{1/2}SA^{1/2})^{-1/2}\left((A^{1/2}SA^{1/2})^{1/2} A^{-1/2}(P_i^TS_iP_i)A^{-1/2}   (A^{1/2}SA^{1/2})^{1/2} \right)^{1/2}(A^{1/2}SA^{1/2})^{-1/2}
\end{equation}
are both positive semi-definite and such that $T_i K T_i = R_i K R_i = \tilde{K}_i$.
Using Lemma~\ref{lemma:squareroot}, we deduce that $T_i = R_i$. Equation~\eqref{eq:pointfixe2} is then equivalent to Equation~\eqref{eq:pointfixe}, replacing $R_i$ by $T_i$ in the equation.  Equation~\eqref{eq:pointfixe3} is a direct consequence of  Lemma~\ref{lemma:squareroot} applied to Equation~\eqref{eq:pointfixe2}.
\endproof

\begin{lemma}
Let $M_1$ and $M_2$ be two symmetric matrices of the same size, with $M_1$ positive definite and $M_2$ positive semi-definite. Then, the unique positive semi-definite solution of $XM_1X = M_2$ is \[X = M_1^{-\frac{1}{2}}(M_1^{\frac{1}{2}}M_2M_1^{\frac{1}{2}})^{\frac{1}{2}}M_1^{-\frac{1}{2}}.\]
\label{lemma:squareroot}
\end{lemma}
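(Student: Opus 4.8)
The plan is to reduce the matrix equation $XM_1X = M_2$ to a symmetric one by conjugating with $M_1^{1/2}$, which is legitimate since $M_1$ is positive definite and hence admits an invertible symmetric square root. First I would set $Y = M_1^{1/2} X M_1^{1/2}$; then $XM_1X = M_1^{-1/2} Y M_1^{-1} Y M_1^{-1/2}$, so the equation $XM_1X = M_2$ is equivalent to $Y M_1^{-1} Y = M_1^{1/2} M_2 M_1^{1/2}$, but this still has an awkward $M_1^{-1}$ in the middle. A cleaner substitution is to look for $X$ positive semi-definite and write $Z = M_1^{1/2} X M_1^{1/2}$; then $M_1^{1/2}(XM_1X)M_1^{1/2} = (M_1^{1/2}XM_1^{1/2})(M_1^{1/2}XM_1^{1/2}) = Z^2$, so $XM_1X = M_2$ becomes $Z^2 = M_1^{1/2} M_2 M_1^{1/2}$. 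The right-hand side is symmetric positive semi-definite (as $M_2 \geq 0$), hence has a unique symmetric positive semi-definite square root, giving $Z = (M_1^{1/2} M_2 M_1^{1/2})^{1/2}$ and therefore $X = M_1^{-1/2} Z M_1^{-1/2} = M_1^{-1/2}(M_1^{1/2} M_2 M_1^{1/2})^{1/2} M_1^{-1/2}$, which is the claimed formula.

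Next I would check the two things that make this an "if and only if": that this $X$ is indeed positive semi-definite, and that it actually solves the equation. Positivity is immediate because $X$ is a congruence $M_1^{-1/2}(\cdot)M_1^{-1/2}$ of the positive semi-definite matrix $Z$, and congruences preserve positive semi-definiteness; symmetry is clear. That it solves $XM_1X = M_2$ follows by running the substitution backwards: $M_1^{1/2}(XM_1X)M_1^{1/2} = Z^2 = M_1^{1/2}M_2M_1^{1/2}$, and cancelling the invertible factors $M_1^{1/2}$ on both sides yields $XM_1X = M_2$.

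For uniqueness, suppose $X_1, X_2$ are both positive semi-definite with $X_jM_1X_j = M_2$. Setting $Z_j = M_1^{1/2} X_j M_1^{1/2}$, each $Z_j$ is symmetric positive semi-definite and satisfies $Z_j^2 = M_1^{1/2}M_2M_1^{1/2}$. Since a positive semi-definite matrix has a \emph{unique} positive semi-definite square root, $Z_1 = Z_2$, and conjugating back by the invertible matrix $M_1^{-1/2}$ gives $X_1 = X_2$. I expect the one point requiring a word of care — and the only place where one genuinely uses a nontrivial fact rather than bookkeeping — is the uniqueness of the positive semi-definite square root of a positive semi-definite symmetric matrix; this is standard (diagonalize and take nonnegative square roots of the eigenvalues, then argue that any positive semi-definite square root must be simultaneously diagonalizable), so the proof is short overall, with no real obstacle.
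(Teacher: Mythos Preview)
Your proof is correct and follows essentially the same route as the paper: both conjugate by $M_1^{1/2}$ via the substitution $Z = M_1^{1/2} X M_1^{1/2}$ (the paper calls it $Y$), reduce to $Z^2 = M_1^{1/2} M_2 M_1^{1/2}$, and invoke uniqueness of the positive semi-definite square root. Your version is simply more explicit about the verification and uniqueness steps, which the paper leaves implicit.
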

\proof
Since $M_1$ is positive definite, we can write $X = M_1^{-\frac{1}{2}}YM_1^{-\frac{1}{2}}$, with $Y$  positive semi-definite, and thus $YY = M_1^{\frac{1}{2}}M_2M_1^{\frac{1}{2}}$, which is positive semi-definite. This matrix has a unique  positive semi-definite square root  $Y = (M_1^{\frac{1}{2}}M_2M_1^{\frac{1}{2}})^{\frac{1}{2}}$. 
\endproof

\subsection{Local minimizers}
In this section, we assume that the matrix $A$ defined in \eqref{eq:matrixA} is invertible and we show an optimality result for  Gaussian measures whose covariance matrix satisfies~\eqref{eq:pointfixe}. In particular, if this covariance matrix is also invertible, the measure will be solution of~\eqref{ref:GWB}.

We will need the following lemma
\begin{lemma}
Let $M$ be a $d\times d$ matrix, then 
\begin{equation}\label{eq:young}
\frac{1}{2} u^T (M^TM)^{1/2}u + \frac{1}{2} v^T (MM^T)^{1/2}v \geq u^tM^Tv,\qquad \forall u,v \in \R^d.
\end{equation}
\end{lemma}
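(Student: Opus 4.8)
The plan is to diagonalize $M$ via its singular value decomposition and thereby reduce \eqref{eq:young} to a one-dimensional Young inequality. First I would write $M = W\Sigma V^T$, where $W$ and $V$ are $d\times d$ orthogonal matrices and $\Sigma = \mathrm{diag}(\sigma_1,\ldots,\sigma_d)$ with $\sigma_1,\ldots,\sigma_d \geq 0$ the singular values of $M$. Then $M^TM = V\Sigma^2 V^T$ and $MM^T = W\Sigma^2 W^T$, so by uniqueness of the positive semi-definite square root one has $(M^TM)^{1/2} = V\Sigma V^T$ and $(MM^T)^{1/2} = W\Sigma W^T$; moreover $M^T = V\Sigma W^T$.

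Next I would perform the change of variables $a = V^Tu$ and $b = W^Tv$, which is a bijection of $\R^d$ since $V$ and $W$ are orthogonal. A direct computation then gives
\[
u^T(M^TM)^{1/2}u = \sum_{k=1}^d \sigma_k a_k^2,\qquad v^T(MM^T)^{1/2}v = \sum_{k=1}^d \sigma_k b_k^2,\qquad u^TM^Tv = \sum_{k=1}^d \sigma_k a_k b_k,
\]
so that \eqref{eq:young} is equivalent to
\[
\sum_{k=1}^d \sigma_k\left(\tfrac12 a_k^2 + \tfrac12 b_k^2 - a_kb_k\right) \geq 0.
\]
This I would conclude term by term: each $\sigma_k \geq 0$ and $\tfrac12 a_k^2 + \tfrac12 b_k^2 - a_kb_k = \tfrac12(a_k-b_k)^2 \geq 0$, so every summand is nonnegative, and summing over $k$ finishes the argument.

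I do not expect any serious obstacle here. The only points deserving a word of care are the appeal to uniqueness of the positive semi-definite square root (so that $(M^TM)^{1/2}$ is indeed $V\Sigma V^T$ and not some other root) and the verification that the three quadratic forms are expressed in the matching orthonormal bases coming from one and the same SVD; both are routine. An SVD-free alternative would be to exhibit the difference of the two sides of \eqref{eq:young} directly as a sum of squares, but the singular value decomposition keeps the bookkeeping transparent and is the route I would take.
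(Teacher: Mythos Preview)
Your argument is correct. Diagonalizing via the singular value decomposition $M=W\Sigma V^T$ reduces the inequality to $\sum_k \sigma_k\,\tfrac12(a_k-b_k)^2\ge 0$, and the two points you flag (uniqueness of the PSD square root, and using the same SVD for all three quadratic forms) are handled properly.

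The paper proceeds differently: it fixes $u$ and maximizes the concave quadratic $v\mapsto u^TM^Tv-\tfrac12 v^T(MM^T)^{1/2}v$ by solving the first-order condition $(MM^T)^{1/2}v=Mu$, then verifies the algebraic identity $M^T(MM^T)^{-1/2}M=(M^TM)^{1/2}$ to identify the maximum as $\tfrac12 u^T(M^TM)^{1/2}u$. This Legendre-transform style argument fits the convex-analytic flavour of the surrounding dual-potential constructions, and it yields the polar-decomposition identity as a byproduct, but it requires some care with the non-invertible case (showing $\mathrm{Im}\,(MM^T)^{1/2}=\mathrm{Im}\,M$ so that the critical equation is solvable, and interpreting $(MM^T)^{-1/2}$ on the image). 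Your SVD route sidesteps all of that: degenerate singular values simply contribute zero terms, no pseudo-inverse is needed, and the proof is shorter and more self-contained. Either approach is fine; yours is the more elementary one.
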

\proof
Since $(MM^T)^{1/2}$ is symmetric, it is a bijection from $\mathrm{Im} ((MM^T)^{1/2}) = \mathrm{Ker}((MM^T)^{1/2})^\perp$ into itself. 
Note that by construction of the square root of a symmetric semi definite matrix it holds $\mathrm{Ker}((MM^T)^{1/2}) = \mathrm{Ker}(MM^T)$. Furthermore, 
\[
x \in \mathrm{Ker}(MM^T)\Leftrightarrow x^tMM^Tx = 0 \Leftrightarrow |M^Tx| = 0 \Leftrightarrow x\in \mathrm{Ker} (M^T) = \mathrm{Im}(M)^\perp.
\]
Therefore, $\mathrm{Im} ((MM^T)^{1/2}) = \mathrm{Im}(M)$.

Now, fix some $u\in \R^d$ and consider the function $f(v) = u^tM^Tv -  \frac{1}{2} v^T (MM^T)^{1/2}v$, $v\in \R^d$.
Then 
\[
\nabla f(v) = Mu-(MM^T)^{1/2}v = 0  \Leftrightarrow v = (MM^T)^{-1/2} Mu.
\]
Since $f$ is convex, one concludes that 
\[
\sup_{v\in \R^d} f(v) = f((MM^T)^{-1/2} Mu) = \frac{1}{2}u^TM^T(MM^T)^{-1/2}Mu.
\]
Finally, note that
\[
M^T(MM^T)^{-1/2}MM^T(MM^T)^{-1/2}M=M^TM
\]
and so $M^T(MM^T)^{-1/2}M = (M^TM)^{1/2}$ and $\sup_{v\in \R^d} f(v) = \frac{1}{2} u^T (M^TM)^{1/2}u$, which completes the proof.
\endproof

\begin{remark}
Note that one can alternatively prove \eqref{eq:young} first in the case where $M$ is invertible (in this case $(MM^T)^{1/2}$ is also invertible) and then extend the validity of the inequality to general matrices by approximation.
\end{remark}

\begin{proposition}
For all $i \in \llbracket 1,p \rrbracket$, let $\nu_i$ be a centered Gaussian probability measure on $\R^{d_i}$ with covariance matrix $S_i$, and let $\gamma^*$ be a centered Gaussian probability measure on $\R^d$ with a covariance matrix $S$ satisfying Equation~\eqref{eq:pointfixe}. Then $\gamma^*$  minimizes $\mathcal{F}$ over the class of all probability measures $\gamma\in \mathcal{P}_2(\R^d)$ such that $\gamma(F)=1$, where $F$ denotes the support of $\gamma^*.$ In particular, if $S$ is an invertible solution of Equation~\eqref{eq:pointfixe} (or equivalently of Equation~\eqref{eq:pointfixe2}), then $\gamma^*$ is a solution of \eqref{ref:GWB}. 
\label{prop:gaussian_solution2}
\end{proposition}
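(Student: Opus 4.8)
The plan is to transfer the problem to the functional $\mathcal{G}$ via Proposition~\ref{prop:lien} and then exhibit an explicit quadratic lower bound for $\mathcal{G}$ that is saturated at the candidate minimizer. Write $\mu^* = A^{1/2}\#\gamma^*$; it is the centered Gaussian with covariance $K = A^{1/2}SA^{1/2}$, and by Proposition~\ref{prop:lien} one has $\mathcal{F}(\gamma) = C + \mathcal{G}(A^{1/2}\#\gamma)$ for all $\gamma$. Since $A^{1/2}$ is a linear bijection, $\gamma\mapsto A^{1/2}\#\gamma$ maps $\{\gamma:\gamma(F)=1\}$ bijectively onto $\{\mu:\mu(G)=1\}$, where $G := A^{1/2}F = \mathrm{Im}(K) = \mathrm{supp}(\mu^*)$; moreover Equation~\eqref{eq:pointfixe} is exactly the fixed-point identity $K = \sum_{i=1}^p\lambda_i(K^{1/2}\tilde{K}_iK^{1/2})^{1/2}$, where $\tilde{K}_i = A^{-1/2}P_i^TS_iP_iA^{-1/2}$ is the covariance of $\tilde{\nu}_i$. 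So the statement reduces to proving $\mathcal{G}(\mu)\geq\mathcal{G}(\mu^*)$ for every $\mu\in\mathcal{P}_2(\R^d)$ with $\mu(G)=1$.

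For the lower bound I would introduce the positive semidefinite matrices $T_i = K^{-1/2}(K^{1/2}\tilde{K}_iK^{1/2})^{1/2}K^{-1/2}$ and $\hat{T}_i = \tilde{K}_i^{-1/2}(\tilde{K}_i^{1/2}K\tilde{K}_i^{1/2})^{1/2}\tilde{K}_i^{-1/2}$, where $K^{-1/2}$ and $\tilde{K}_i^{-1/2}$ are Moore--Penrose pseudo-inverses (when $S$ is invertible, $T_i$ is the matrix $R_i$ of \eqref{eq:Ri}). The fixed-point identity immediately gives $\sum_{i=1}^p\lambda_iT_i = K^{-1/2}KK^{-1/2} = P_G$, the orthogonal projection onto $G$. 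The key estimate is a Fenchel--Young inequality for these quadratic forms: applying the inequality~\eqref{eq:young} with $M = \tilde{K}_i^{1/2}K^{1/2}$, so that $(M^TM)^{1/2} = (K^{1/2}\tilde{K}_iK^{1/2})^{1/2}$ and $(MM^T)^{1/2} = (\tilde{K}_i^{1/2}K\tilde{K}_i^{1/2})^{1/2}$, and substituting $u = K^{-1/2}y$, $v = \tilde{K}_i^{-1/2}x$ gives
\[
x\cdot y \ \leq\ \tfrac12\, y^TT_iy + \tfrac12\, x^T\hat{T}_ix\qquad\text{for all } y\in G \text{ and all } x\in\mathrm{Im}(\tilde{K}_i).
\]
Given $\mu$ with $\mu(G)=1$, split it as $\mu = \bar\mu + \mu_0$ with $\mu_0$ centered (still carried by $G$) and $\Sigma$ the covariance of $\mu$; integrating the displayed inequality against any coupling of $\tilde{\nu}_i$ and $\mu_0$, and using both $W_2^2(\tilde{\nu}_i,\mu_0) = \mathrm{tr}(\tilde{K}_i)+\mathrm{tr}(\Sigma)-2\sup_\pi\int x\cdot y\,d\pi$ and $W_2^2(\tilde{\nu}_i,\mu) = |\bar\mu|^2 + W_2^2(\tilde{\nu}_i,\mu_0)$, yields
\[
W_2^2(\tilde{\nu}_i,\mu)\ \geq\ \mathrm{tr}(\tilde{K}_i)+\mathrm{tr}(\Sigma)-\mathrm{tr}(T_i\Sigma)-\mathrm{tr}(\hat{T}_i\tilde{K}_i).
\]
Summing over $i$ with the weights $\lambda_i$, the two $\Sigma$-dependent terms $\mathrm{tr}(\Sigma)=\mathrm{tr}(P_G\Sigma)$ and $\sum_i\lambda_i\mathrm{tr}(T_i\Sigma)=\mathrm{tr}(P_G\Sigma)$ cancel (here $\mathrm{Im}(\Sigma)\subseteq G$), leaving the constant lower bound $\mathcal{G}(\mu)\geq\sum_{i=1}^p\lambda_i\bigl(\mathrm{tr}(\tilde{K}_i)-\mathrm{tr}(\hat{T}_i\tilde{K}_i)\bigr)$.

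To conclude I would check that this constant equals $\mathcal{G}(\mu^*)$. A short trace computation (using $\tilde{K}_i^{-1/2}\tilde{K}_i=\tilde{K}_i^{1/2}$, cyclicity of the trace, and that $\tilde{K}_i^{1/2}K\tilde{K}_i^{1/2}$ and $K^{1/2}\tilde{K}_iK^{1/2}$ share the same nonzero spectrum) gives $\mathrm{tr}(\hat{T}_i\tilde{K}_i)=\mathrm{tr}((K^{1/2}\tilde{K}_iK^{1/2})^{1/2})$, and the fixed-point identity gives $\sum_i\lambda_i\mathrm{tr}((K^{1/2}\tilde{K}_iK^{1/2})^{1/2})=\mathrm{tr}(K)$; hence the bound equals $\sum_i\lambda_i\mathrm{tr}(\tilde{K}_i)-\mathrm{tr}(K)$, which is exactly $\mathcal{G}(\mu^*)=\sum_i\lambda_i\bigl(\mathrm{tr}(\tilde{K}_i)+\mathrm{tr}(K)-2\mathrm{tr}((K^{1/2}\tilde{K}_iK^{1/2})^{1/2})\bigr)$. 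This shows $\mu^*$ minimizes $\mathcal{G}$ over measures carried by $G$, hence $\gamma^*$ minimizes $\mathcal{F}$ over measures carried by $F$; when $S$ is invertible one has $F=\R^d$, so $\gamma^*$ is a solution of~\eqref{ref:GWB} (and recall that \eqref{eq:pointfixe} and \eqref{eq:pointfixe2} are equivalent in that case). I expect the only genuine difficulty to be the bookkeeping forced by the possible degeneracy of $S$, $K$ and the $\tilde{K}_i$: one has to justify the pseudo-inverse manipulations, apply \eqref{eq:young} only on the subspaces on which the chosen substitution is legitimate, and make sure all the traces appearing are finite and cancel as claimed. In the non-degenerate case the $T_i$ are the actual Monge maps pushing $\mu^*$ onto $\tilde{\nu}_i$ and the whole argument reduces to a transparent Kantorovich duality computation.
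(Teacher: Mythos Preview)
Your proposal is correct and essentially identical to the paper's proof: both transfer to $\mathcal{G}$ via Proposition~\ref{prop:lien}, use inequality~\eqref{eq:young} with $M=\tilde K_i^{1/2}K^{1/2}$ to produce the quadratic Kantorovich potentials (your $\tfrac12 y^TT_iy$ and $\tfrac12 x^T\hat T_i x$ are exactly the paper's $\phi_i$ and $\psi_i$ on $\mathrm{Im}(K)$ and $\mathrm{Im}(\tilde K_i)$), and exploit the fixed-point identity $\sum_i\lambda_i T_i=P_G$ to make the $\mu$-dependent terms cancel. The only cosmetic difference is that the paper integrates the pointwise inequality $\tfrac12|x-y|^2\ge(\tfrac12|x|^2-\phi_i(x))+(\tfrac12|y|^2-\psi_i(y))$ directly against a coupling of $\mu$ and $\tilde\nu_i$, which handles a possibly non-centered $\mu$ automatically, whereas you split off the mean of $\mu$ first.
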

\proof
For all $i \in \Interval{p}$, write $K_i = A^{-1/2}P_i^TS_iP_iA^{-1/2}$ for the covariance matrices of $\tilde{\nu}_i$ and denote by $\mu^*$ the Gaussian measure with covariance matrix $K = A^{1/2} S A^{1/2}$. 
By assumptions, the positive semi definite matrices  $K$ and $K_i$ are related by the identity
\begin{equation}\label{eq:ptfixeencore}
K = \sum_{i=1}^p \lambda_i  ( K^{1/2} K_i K^{1/2} )^{1/2}.
\end{equation}
The support of $\mu^*$ is $\mathrm{Im}(K).$
It is enough to show that $\mu^*$ minimizes $\mathcal{G}$ over the set of probability measures $\mu \in \mathcal{P}_2(\R^d)$ such that $\mu(\mathrm{Im}(K))=1$.

According to the classical formula for the Wasserstein distance between two Gaussian distributions~\cite{dowson1982frechet}, it holds
\[
W_2^2(\mu^*,\tilde{\nu}_i) = \mathrm{Tr}(K) + \mathrm{Tr}(K_i) - 2 \mathrm{Tr}\left( \left(K^{1/2}K_iK^{1/2}\right)^{1/2}\right).
\]
Recall in particular, that $ \mathrm{Tr}\left( \left(K^{1/2}K_iK^{1/2}\right)^{1/2}\right) =  \mathrm{Tr}\left( \left(K_i^{1/2}KK_i^{1/2}\right)^{1/2}\right)$, by symmetry of the Wasserstein distance.
Therefore, using \eqref{eq:ptfixeencore} one gets
\[
\sum_{i=1}^p\lambda_iW_2^2(\mu^*,\tilde{\nu}_i) = \sum_{i=1}^p\lambda_i \mathrm{Tr}(K_i) -  \mathrm{Tr}\left( K\right).
\]

Now let us construct a couple of Kantorovich potentials between $\mu^*$ and $\tilde{\nu_i}$, for all $i\in \Interval{p}.$ For all $i \in \Interval{p}$, define the quadratic forms
\[
q_i(u) = \frac{1}{2}u^T ( K^{1/2} K_i K^{1/2} )^{1/2}u, u\in \R^d\qquad\text{and}\qquad
r_i(v) = \frac{1}{2}v^T ( K_i^{1/2} K K_i^{1/2} )^{1/2}v, v\in \R^d.
\]
Note that $K^{-1/2}$ and $K_i^{-1/2}$ are well defined on $\mathrm{Im}(K) $ and $\mathrm{Im}(K_i)$ respectively. 
Therefore, for all $i\in \Interval{p}$, the functions 
\[
\phi_i(x) = q_i(K^{-1/2}x),\qquad x\in \mathrm{Im}(K)\qquad\text{and}\qquad \psi_i(y) = r_i(K_i^{-1/2}y), \qquad y\in \mathrm{Im}(K_i)
\]
are also well defined.
We claim that $(\phi_i,\psi_i)$ is a couple of Kantorovich potentials for $\mu^*$ and $\tilde{\nu}_i$. 
First note that applying \eqref{eq:young} with $M=K_i^{1/2}K^{1/2}$, $u=K^{-1/2}x$ and $v=K_i^{-1/2}y$ yields to
\begin{equation}\label{eq:pot1}
\phi_i(x) +\psi_i(y) \geq x\cdot y,\qquad \forall x\in \mathrm{Im}(K), \forall y\in\mathrm{Im}(K_i).
\end{equation}
Moreover, if $u_1,\ldots,u_{k}$ is an orthonormal basis of $\mathrm{Im}(K)$, and $Z_1,\ldots,Z_k$ independent standard Gaussians, then
$\mu^* = \mathrm{Law} (K^{1/2}\sum_{i=1}^k Z_iu_i)$. Therefore,
\[
\int \phi_i(x)\, d\mu^*(x)  = \E\left[\phi_i\left(K^{1/2}(\sum_{i=1}^k Z_iu_i)\right)\right] = \E\left[q_i(\sum_{i=1}^k Z_iu_i)\right].
\]
Let $k=\mathrm{dim} (\mathrm{Im}(K))$ ; note that $( K^{1/2} K_i K^{1/2} )^{1/2}$ leaves stable $\mathrm{Im}(K)$ and vanishes on $\mathrm{Ker}(K)$. Let us choose $u_1,\ldots,u_k$ as an orthonormal basis of eigenvectors of the restriction of $( K^{1/2} K_i K^{1/2} )^{1/2}$ to $\mathrm{Im}(K)$, and denote by $\alpha_1,\ldots, \alpha_k \geq0$ the corresponding eigenvalues.
Then it holds 
\[
\E\left[q_i(\sum_{i=1}^k Z_iu_i)\right] = \frac{1}{2}\E\left[\sum_{i=1}^k \alpha_i Z_i^2\right] = \frac{1}{2}\sum_{i=1}^k \alpha_i = \frac{1}{2}\mathrm{Tr}(( K^{1/2} K_i K^{1/2} )^{1/2}).
\]
Thus 
\[
\int \phi_i(x)\, d\mu^*(x) =\frac{1}{2}\mathrm{Tr}(( K^{1/2} K_i K^{1/2} )^{1/2})
\]
and similarly, 
\[
\int \psi_i(y)\, d\tilde{\nu}_i(y) =\frac{1}{2}\mathrm{Tr}(( K_i^{1/2} K K_i^{1/2} )^{1/2})=\frac{1}{2}\mathrm{Tr}(( K^{1/2} K_i K^{1/2} )^{1/2}).
\]
This shows that
\begin{multline}\label{eq:pot2}
\int \frac{|x|^2}{2} -  \phi_i(x)\, d\mu^*(x) + \int \frac{|y|^2}{2}- \psi_i(y)\, d\tilde{\nu}_i(y) =\\  \frac{1}{2}\mathrm{Tr}(K) + \frac{1}{2}\mathrm{Tr}(K_i) -  \mathrm{Tr}\left( \left(K^{1/2}K_iK^{1/2}\right)^{1/2}\right) = \frac{1}{2}W_2^2(\mu^*,\tilde{\nu}_i).
\end{multline}
According to \eqref{eq:pot1} and \eqref{eq:pot2}, $(\phi_i,\psi_i)$ is a pair of Kantorovich potentials between $\mu^*$ and $\tilde{\nu}_i$.

Now, let $\mu \in \mathcal{P}_2(\mathbb{R}^d)$, be such that $\mu (\mathrm{Im}(K))=1$. For all $i\in \Interval{p}$, the inequality 
\[
\frac{|x - y|^2}{2} \geq \frac{|x|^2}{2}-\phi_i(x) + \frac{|y|^2}{2} - \psi_i(y),\qquad x\in \mathrm{Im}(K), y\in \mathrm{Im}(K_i)
\]
(which follows from \eqref{eq:pot1}) gives after integration that 
\[
\frac{W_2^2(\mu,\tilde{\nu}_i)}{2} \geq \int_{\mathrm{Im}(K)} \left(\frac{|x|^2}{2}-\phi_i(x)\right)d\mu(x)+\int_{\mathrm{Im}(K_i)} \left(\frac{|y|^2}{2}-\psi_i(y)\right)\,d\tilde{\nu}_i(y).
\]
Summing over $i$,
\begin{align*}
\frac{1}{2}  \sum_{i=1}^p  \lambda_iW_2^2(\mu,\tilde{\nu}_i) &\geq  \sum_{i=1}^p  \lambda_i \int_{\mathrm{Im}(K)} \left(\frac{|x|^2}{2}-\phi_i(x)\right)d\mu(x)+\sum_{i=1}^p  \lambda_i\int_{\mathrm{Im}(K_i)} \left(\frac{|y|^2}{2}-\psi_i(y)\right)\,d\tilde{\nu}_i(y)\\
&= \sum_{i=1}^p  \lambda_i\int_{\mathrm{Im}(K_i)} \left(\frac{|y|^2}{2}-\psi_i(y)\right)\,d\tilde{\nu}_i(y)\\
& = \sum_{i=1}^p\lambda_i \mathrm{Tr}(K_i) -  \mathrm{Tr}\left( K\right)\\
&= \frac{1}{2} \sum_{i=1}^p \lambda_i W_2^2(\mu^*,\tilde{\nu}_i),
\end{align*}
where the second line comes from the fact that $\sum_{i=1}^p \lambda_i \phi_i(x) = \frac{|x|^2}{2}$, $x\in \mathrm{Im}(K)$, as easily follows from 
\eqref{eq:ptfixeencore}. This completes the proof.
\endproof

\pagebreak
\subsection{Discussion of the uniqueness question in the Gaussian case}
\subsubsection{A simple example of non-uniqueness}
Consider $\nu_1=\nu_2 = \mathcal{N}(0,1)$, $P_1 : \R^2 \to \R : (x,y) \mapsto x$ and $P_2 : \R^2 \to \R : (x,y) \mapsto y$ and (say) $\lambda_1 = \lambda_2 = 1/2$. Then
\[
P_1 = [1 \ 0], \qquad P_2 = [0 \ 1],\quad
A = \lambda_1 P_1^TP_1 +  \lambda_2 P_2^TP_2 = \frac{1}{2} \left[\begin{array}{cc}  1&0   \\0  & 1 \end{array}\right]
\]
and $\tilde{\nu}_1 = \mathcal{N}\left(0,  \left[\begin{array}{cc}  2&0   \\0  & 0 \end{array}\right] \right)$ and $\tilde{\nu}_2 = \mathcal{N}\left(0,  \left[\begin{array}{cc}  0&0   \\0  & 2 \end{array}\right] \right)$. In other words, denoting by $(e_1,e_2)$ the standard basis of $\R^2$, one has $\tilde{\nu}_i = \mathrm{Law} (\sqrt{2}Z e_i)$, where $Z$ is a standard Gaussian random variable. 

\medskip

\noindent \textbf{Fact 1.} Any coupling between $\tilde{\nu}_1$ and $\tilde{\nu}_2$ is optimal.\\
Indeed, if $(X_1,X_2)$ is a coupling between $\tilde{\nu}_1$ and $\tilde{\nu}_2$, then $X_1$ and $X_2$ are almost surely orthogonal vectors of $\R^2$ and so $X_1\cdot X_2=0$ a.s. Therefore, 
\[
\E[|X_1 - X_2|^2] = \E[|X_1|^2] +\E[|X_2|^2] =4.
\]
Thus $W_2^2(\tilde{\nu}_1,\tilde{\nu}_2)=4$ and all couplings are optimal. 

\medskip

\noindent \textbf{Fact 2.} The class of Wasserstein barycenters of $\tilde{\nu}_1$ and $\tilde{\nu}_2$ for $\lambda_1 = \lambda_2 = 1/2$ consists in all probability distributions of the form :  $\mathrm{Law}\left(\frac{1}{2}X_1 + \frac{1}{2}X_2\right)$, where $(X_1,X_2)$ is an arbitrary coupling between $\tilde{\nu}_1$ and $\tilde{\nu}_2$.\\
Indeed, it is easily checked that if $(X_1,X_2)$ is a coupling between $\tilde{\nu}_1$ and $\tilde{\nu}_2$, then for $i=1,2$ 
\[
\E\left[\left|X_i -\left(\frac{1}{2}X_1 + \frac{1}{2}X_2\right) \right|^2\right] =\frac{1}{4} \E[|X_1 - X_2|^2] = 1
\]
and so, denoting by $\tilde{\nu}_{1.5} = \mathrm{Law}\left(\frac{1}{2}X_1 + \frac{1}{2}X_2\right)$, it holds  
\[
\frac{1}{2}W_2^2(\tilde{\nu}_1,\tilde{\nu}_{1.5})+\frac{1}{2}W_2^2(\tilde{\nu}_{1.5},\tilde{\nu}_{2}) \leq 1.
\]
Moreover, since $(a+b)^2 \leq 2 (a^2+b^2)$, for any $\tilde{\nu} \in \mathcal{P}_2(\R^2)$ it always holds
\[
\frac{1}{2}W_2^2(\tilde{\nu}_1,\tilde{\nu})+\frac{1}{2}W_2^2(\tilde{\nu},\tilde{\nu}_{2}) \geq \frac{1}{4} W_2^2(\tilde{\nu}_1,\tilde{\nu}_2)=1.
\]
This shows that $\tilde{\nu}_{1.5}$ is a Wasserstein barycenters of $\tilde{\nu}_1$ and $\tilde{\nu}_2$.

\medskip

\noindent \textbf{Fact 3.} Among the Gaussian generalized barycenters of $\nu_1$ and $\nu_2$, some have densities and some don't.\\
Indeed, for any $\alpha \in [-1,1]$, choosing $X_1=\sqrt{2}Z_1e_1,X_2=\sqrt{2}Z_2e_2$ with $Z=(Z_1,Z_2)$ a centered two dimensional Gaussian random vector with covariance matrix $K_\alpha = 2 S_\alpha$, where
\[
S_\alpha =   \left[\begin{array}{cc}  1&\alpha   \\\alpha & 1 \end{array}\right]
\]
gives 
\[
\tilde{\nu}_{1.5} = \mathrm{Law} \left(\frac{e_1Z_1+e_2Z_2}{2} \right)=  \mathcal{N}\left(0,  S_\alpha/2 \right),
\]
which yields to the following solution of \eqref{ref:GWB}
\[
\gamma_\alpha^* = \mathcal{N}(0, S_\alpha).
\]
Note that $S_\alpha$ is invertible only when $\alpha \in (-1,1).$ In particular, the equation \eqref{eq:pointfixe} has infinitely many solutions.

\medskip

\noindent \textbf{Fact 4.} 
In this example, Equation \eqref{eq:pointfixe} reads as follows
\begin{equation}\label{eq:exptfixe}
S = \left(S^{1/2}A_1S^{1/2}\right)^{1/2}+  \left(S^{1/2}A_2S^{1/2}\right)^{1/2},
\end{equation}
denoting $A_1 = \left[\begin{array}{cc}  1&0   \\0  & 0 \end{array}\right]$ and $A_2 = \left[\begin{array}{cc}  0&0   \\0  & 1 \end{array}\right]$.
One can easily check that $S=A_1$ is solution of \eqref{eq:exptfixe}. Though $A_1$ is solution of the fixed point equation, the Gaussian measure with covariance matrix $A_1$ is not a solution of \eqref{ref:GWB}.

\subsubsection{Conditions for uniqueness of the Gaussian solution}

The following proposition characterizes the sets of matrices $\{P_i\}_{i \in  \llbracket 1,p\rrbracket}$ which ensure the uniqueness of the Gaussian solution.

\begin{proposition}
\label{prop:gauss_uniqueness}
For all $i \in  \llbracket 1,p\rrbracket$, let $(u_i^k)_{1\le k \le d_i}$ be the set of columns of the matrix $P_i^T$. For $i\in \llbracket 1,p\rrbracket $ and $k,m \in \llbracket 1,d_i\rrbracket$, define the symmetric matrix 
$C_i^{k,m} = (u_i ^ k) (u_i^ m )^T + (u_i ^ m) (u_i^ k )^T$. Assume that the $\nu_{i}$, $1\leq i\leq p$ are all non degenerate centered Gaussian probability measures. Then~\eqref{ref:GWB} has a unique centered Gaussian solution if and only if the linear span of the family   $\{C_i^{k,m}\}_{\stackrel{i \in  \llbracket 1,p\rrbracket}{k,m \in \llbracket 1,d_i\rrbracket}}$ is the set of symmetric matrices on $\R^d$.   
\end{proposition}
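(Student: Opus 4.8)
The plan is to reduce \eqref{ref:GWB}, via Proposition~\ref{prop:lien}, to a classical Wasserstein barycenter problem between the (generally degenerate) Gaussians $\tilde\nu_i=(A^{-1/2}P_i^T)\#\nu_i$, and then to describe the convex set of Gaussian solutions explicitly. Observe first that the span condition forces $A$ to be invertible: if $x\in\bigcap_i\mathrm{Ker}(P_i)$ then $x$ is orthogonal to every column $u_i^k$ of $P_i^T$, so $C_i^{k,m}x=0$ for all $i,k,m$ and hence $xx^T$ lies in the span only if $x=0$. Consequently, when the span condition fails \emph{and} $A$ is singular, both sides of the stated equivalence are false: using the reduction of Section~\ref{sec:technical}, one may take a centered Gaussian solution supported on $K^\perp$ (it exists by Proposition~\ref{prop:existence_Gaussian} applied to $\bar{\mathcal F}$) and add an independent centered Gaussian supported on $K=\bigcap_i\mathrm{Ker}(P_i)\neq\{0\}$, producing infinitely many centered Gaussian solutions. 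So one may assume $A$ invertible, in which case, by Proposition~\ref{prop:lien}, the centered Gaussian solutions of \eqref{ref:GWB} are precisely the measures $\mathcal N(0,A^{-1/2}KA^{-1/2})$ with $\mathcal N(0,K)$ a centered Gaussian Wasserstein barycenter of the $\tilde\nu_i$. Throughout I use the elementary identity $(P_iSP_i^T)_{k,m}=\tfrac12\langle S,C_i^{k,m}\rangle$, valid for symmetric $S$, which identifies the span condition with the injectivity of the linear map $S\mapsto(P_iSP_i^T)_{1\le i\le p}$ on symmetric matrices. The implication ``span condition $\Rightarrow$ uniqueness'' is then immediate from Proposition~\ref{prop:uniquenessproj}: since each $\nu_i$ has a density, two centered Gaussian solutions $\gamma_1=\mathcal N(0,S_1)$, $\gamma_2=\mathcal N(0,S_2)$ satisfy $P_iS_1P_i^T=P_iS_2P_i^T$ for all $i$, hence $S_1-S_2$ is orthogonal to every $C_i^{k,m}$, hence $S_1=S_2$.

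For the converse, assume $\mathcal H:=\{H\in\mathcal S^d:\ P_iHP_i^T=0\ \forall i\}=(\mathrm{span}\{C_i^{k,m}\})^\perp\neq\{0\}$ and let me build two distinct centered Gaussian solutions. Writing $\mathcal F$ on centered Gaussians as the convex functional $\Psi(S)=C+\mathrm{Tr}(AS)-2\sum_i\lambda_i\mathrm{Tr}\big((S_i^{1/2}P_iSP_i^TS_i^{1/2})^{1/2}\big)$ of the covariance $S\in\mathcal S^d_+$, the strict concavity of $X\mapsto\mathrm{Tr}(X^{1/2})$ on $\mathcal S^d_+$ along segments shows that any two minimizers $S_1,S_2$ of $\Psi$ satisfy $P_iS_1P_i^T=P_iS_2P_i^T$ for all $i$, i.e. $S_1-S_2\in\mathcal H$; conversely, if $S_0$ minimizes $\Psi$ and $S_0+H\succeq0$ with $H\in\mathcal H$, then $P_i(S_0+H)P_i^T=P_iS_0P_i^T$, so $S_0+H$ minimizes $\Psi$ as well. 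Hence the set $\mathcal M$ of covariances of centered Gaussian solutions equals $(S_0+\mathcal H)\cap\mathcal S^d_+$ for any $S_0\in\mathcal M$. Choosing $S_0=\bar S$ in the relative interior of the compact convex set $\mathcal M$ and setting $V=\mathrm{Im}(\bar S)$ (the largest image occurring in $\mathcal M$), a short argument with the faces of $\mathcal S^d_+$ shows that $\mathcal M=\{\bar S\}$ if and only if $\mathcal H\cap\{H\in\mathcal S^d:\ \mathrm{Im}(H)\subseteq V\}=\{0\}$.

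It therefore remains to prove that the relative-interior Gaussian solution $\bar S$ has invertible covariance, i.e. $V=\R^d$; this is the step I expect to be the main obstacle. Granting it, $\{H:\ \mathrm{Im}(H)\subseteq V\}=\mathcal S^d$, so $\mathcal M=\{\bar S\}$ would force $\mathcal H=\{0\}$, contrary to our assumption, and hence $\mathcal M$ is not a singleton, which completes the proof. To establish $\bar S\succ0$, I would work with $\bar K=A^{1/2}\bar SA^{1/2}$, the relative-interior Wasserstein barycenter of $\tilde\nu_1,\dots,\tilde\nu_p$, whose covariances $K_i=A^{-1/2}P_i^TS_iP_iA^{-1/2}$ satisfy $\sum_i\mathrm{Im}(K_i)=\R^d$ precisely because $A$ is invertible, and test the optimality of $\bar K$ against rank-one perturbations $\bar K+t\,uu^T$ with $u\in\mathrm{Ker}(\bar K)$. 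A Schur-complement computation of the eigenvalues of $(\bar K+tuu^T)^{1/2}K_i(\bar K+tuu^T)^{1/2}$ yields the expansion of $W_2^2(\mathcal N(0,K_i),\mathcal N(0,\bar K+tuu^T))$ to leading (half-integer) order in $t$; combining the resulting first-order conditions with the maximality of $\mathrm{Im}(\bar K)$ should force $\mathrm{Ker}(\bar K)$ to be a common kernel of all the $K_i$, hence $\{0\}$. The delicate point, and the technical heart of the argument, is the non-smoothness of $X\mapsto\mathrm{Tr}(X^{1/2})$ at rank-deficient matrices, which governs this perturbative analysis and must be handled with care.
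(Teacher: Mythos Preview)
Your forward direction and the identity $(P_iSP_i^T)_{k,m}=\tfrac12\langle S,C_i^{k,m}\rangle$ coincide with the paper's argument: invoke Proposition~\ref{prop:uniquenessproj} to obtain $P_iS_1P_i^T=P_iS_2P_i^T$ for any two Gaussian solutions, then read off $S_1=S_2$ from injectivity of $S\mapsto(P_iSP_i^T)_i$ on symmetric matrices. For the converse, however, the paper is much terser than you. Its entire argument is the assertion that the Gaussian solution is unique if and only if the linear system $P_iSP_i^T=\Sigma_i$ determines $S$ among \emph{symmetric} matrices, with no discussion of whether a nonzero $H\in\mathcal H$ can be added to a minimizing covariance while remaining positive semidefinite. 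You are right to isolate this as the real issue; your description $\mathcal M=(S_0+\mathcal H)\cap\mathcal S_+^d$ and the face-structure reduction to invertibility of a relative-interior $\bar S$ are correct and go well beyond what the paper writes down.

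That said, the step you yourself flag as the ``main obstacle'' is a genuine gap that your sketch does not close. The rank-one perturbation $\bar K+tuu^T$, $u\in\mathrm{Ker}(\bar K)$, does yield a $\sqrt t$ gain: with $B_i=K_i^{1/2}\bar KK_i^{1/2}$ and $w_i=K_i^{1/2}u$ one finds $\mathrm{Tr}\bigl((B_i+tw_iw_i^T)^{1/2}\bigr)=\mathrm{Tr}(B_i^{1/2})+\sqrt t\,\bigl|P_{\mathrm{Ker}(B_i)}w_i\bigr|+O(t)$, and optimality of $\bar K$ then forces $P_{\mathrm{Ker}(B_i)}w_i=0$ for all $i$ and all $u\in\mathrm{Ker}(\bar K)$, which unwinds to $\mathrm{Im}(K_i)\cap\mathrm{Ker}(\bar K)=\{0\}$ for every $i$. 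But this by itself does \emph{not} contradict $\sum_i\mathrm{Im}(K_i)=\R^d$ (already in $d=2$ one can have $\mathrm{Im}(K_1)=\mathrm{span}\,e_1$, $\mathrm{Im}(K_2)=\mathrm{span}\,e_2$ and a line $\mathrm{span}(e_1+e_2)$ meeting neither), so the first-order analysis alone is insufficient and the ``maximality of $\mathrm{Im}(\bar K)$'' you invoke still needs a further argument. In short: you have taken the converse direction considerably more seriously than the paper does, but the proof---both yours and, on a strict reading, the paper's own---remains incomplete on this point.
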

\proof
 According to Proposition \ref{prop:uniquenessproj}, two Gaussian solutions $\gamma_1$ and $\gamma_2$ of~\eqref{ref:GWB} are such that $P_i \# \gamma_1 = P_i \# \gamma_2$ for each $i$. 
These solutions are equal if and only the set of equations $P_i S P_i^T  =  S_i$ entirely characterizes the covariance matrix $S$. Now, each of these equations can be rewritten as 
$$(u_i ^ k)^T  S  (u_i^ m ) = S_i (k , m),\;\; \forall k,m \in \llbracket 1,d_i\rrbracket.$$
The left terms of these equations can be seen as scalar products between $S$ and the matrices  $C_i^{k,m}$ (seeing matrices as vectors)
$$ \langle C_i^{k,m} , S \rangle =  2  S_i (k , m),\;\; \forall k,m \in \llbracket 1,d_i\rrbracket.$$
This set of equations entirely characterizes $S$ if and only if the  set of $\{C_i^{k,m}\}$ spans the symmetric matrices of $\R^d$.
\endproof

The following result is a direct consequence of what precedes.
\begin{coro}
Let $u_1,\ldots,u_d$ be a basis of $\R^d$ and consider the linear maps $P_{i,j}:\R \to \R : x \mapsto x\cdot (u_i+u_j)$ for $1\leq i\leq j\leq d$. If $\nu_{i,j}$, $1\leq i\leq j\leq d$, is a family of non degenerate centered Gaussian probability measures on $\R$, then for any family of positive weights $\lambda_{i,j}$, $1\leq i\leq j\leq d$ summing to $1$, the corresponding minimization problem \eqref{ref:GWB} (with $p=d(d+1)/2$) admits a unique centered Gaussian solution. 
\end{coro}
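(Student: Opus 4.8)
The plan is to unwind the characterization provided by Proposition \ref{prop:gauss_uniqueness}. Since each $P_{i,j}$ takes values in $\R$, we are in the case where the source dimension of each map is $1$, so the transpose $P_{i,j}^T$ has a single column, namely $u_i+u_j$, and the only associated symmetric matrix is, up to the irrelevant factor $2$, the rank-one matrix $(u_i+u_j)(u_i+u_j)^T$. Proposition \ref{prop:gauss_uniqueness} then reduces the claim to the purely linear-algebraic statement that
\[
\mathrm{span}\{(u_i+u_j)(u_i+u_j)^T : 1\le i\le j\le d\}
\]
equals the space $\mathcal{S}_d$ of symmetric $d\times d$ matrices. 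One should also record that the standing hypothesis of Section \ref{sec:gaussian_gwb} is met here, i.e.\ that $A=\sum_{i\le j}\lambda_{i,j}(u_i+u_j)(u_i+u_j)^T$ is invertible: if $x^TAx=0$ then $x\cdot(u_i+u_j)=0$ for all $i\le j$, hence $x\cdot u_i=0$ for all $i$ by taking $j=i$, so $x=0$ since $(u_1,\dots,u_d)$ is a basis.

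To prove the span identity, I would first take $j=i$, getting $(2u_i)(2u_i)^T=4u_iu_i^T$, so every $u_iu_i^T$ lies in the span; then, for $i<j$, expand
\[
(u_i+u_j)(u_i+u_j)^T=u_iu_i^T+u_ju_j^T+\bigl(u_iu_j^T+u_ju_i^T\bigr)
\]
and subtract the two diagonal terms to conclude that $u_iu_j^T+u_ju_i^T$ lies in the span as well. Thus the span contains the family $\{u_iu_i^T\}_{1\le i\le d}\cup\{u_iu_j^T+u_ju_i^T\}_{1\le i<j\le d}$, which has $d(d+1)/2=\dim\mathcal{S}_d$ elements. Finally, writing $U=[u_1\,|\,\cdots\,|\,u_d]$ for the invertible matrix with columns $u_k$ and $(e_k)$ for the canonical basis, one has $u_ku_m^T+u_mu_k^T=U(e_ke_m^T+e_me_k^T)U^T$; since $M\mapsto UMU^T$ is a linear automorphism of $\mathcal{S}_d$ and $\{e_ke_k^T\}_k\cup\{e_ke_m^T+e_me_k^T\}_{k<m}$ is the standard basis of $\mathcal{S}_d$, its image is again a basis. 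Hence the family spans $\mathcal{S}_d$, and Proposition \ref{prop:gauss_uniqueness} yields uniqueness of the centered Gaussian solution.

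There is essentially no obstacle: the only real content is the elementary fact that the symmetrized rank-one matrices built from a basis $(u_k)$ span all symmetric matrices, which the change of variables $M\mapsto UMU^T$ makes transparent. The only points requiring care are the correct one-dimensional specialization of the notation of Proposition \ref{prop:gauss_uniqueness} (a single column $u_i+u_j$, a single index pair) and recording the invertibility of $A$ so that the proposition applies.
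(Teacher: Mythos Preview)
Your argument is correct and is exactly the natural unwinding of Proposition \ref{prop:gauss_uniqueness}; the paper itself gives no proof beyond declaring the corollary ``a direct consequence of what precedes,'' so there is nothing to compare. One small wording slip: you write ``the source dimension of each map is $1$,'' but you mean the \emph{target} dimension $d_{i,j}=1$; the subsequent computation (single column $u_i+u_j$, etc.) is nonetheless carried out correctly.
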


\subsection{Iterative scheme to compute the Gaussian solution}
\label{sec:iterative_gaussian}
In order to compute numerically the solution of the Gaussian problem, we can implement the strategy proposed in~\cite{Alvarez-Esteban:2015aa} for Gaussian barycenters. The main difference is that none of the Gaussian measures $\tilde{\nu}_i =  A^{-1/2} P_i^T \# \mathcal{N}(0,S_i)$ is full rank, which means that the main results of~\cite{Alvarez-Esteban:2015aa} do not apply directly. We show in the following how they can be adapted.

\begin{proposition}
Assume that for each $1\le i\le p$, $S_i$ is a $d_i\times d_i$ symmetric positive definite matrix, and write $\tilde{K_i} = A^{-1/2} (P_i^T S_i P_i) A^{-1/2}$. For some   $d\times d$ definite positive symmetric matrix $K_0$, define the sequence
\begin{equation}
\label{eq:iterative_gauss}
K_{n+1} =  K_n^{-1/2}\left( \sum_{i=1}^p \lambda_{i} (K_n^{1/2} \tilde{K}_iK_n^{1/2}) 
\right)^{2} K_n^{-1/2},\;\;n\ge 0.
\end{equation}
For all $d\times d$ symmetric non-negative definite matrix $M$, write~\footnote{For the sake of simplicity, we re-use the notation $\mathcal{G}$ here as a function of the symmetric non-negative definite matrix $M$ instead of a function of the measure $\mathcal{N}(0,M)$ as defined in Section~\ref{sec:reformulation}. } \begin{equation}\mathcal{G}(M) = \sum_{i=1}^p \lambda_i W_2^2(\mathcal{N}(0,\tilde{K}_i), \mathcal{N}(0,M)).\end{equation}
The iterative scheme~\eqref{eq:iterative_gauss} defines a sequence of   positive definite matrices $K_n$ such that $\mathcal{G}(K_n)$ 
decreases with $n$. Moreover, there exists a subsequence $(K_{n_l})_{n\ge 0}$ converging to a positive semi-definite matrix $K$ and  $S = A^{-1/2} K A^{-1/2}$ satisfies Equation~\eqref{eq:pointfixe}. If $K$ is positive definite, then $\mathcal{N}(0,S)$ is a solution of~\eqref{ref:GWB}.  Furthermore, if the hypotheses of Proposition~\ref{prop:gauss_uniqueness} are satisfied, then $K$ is the unique solution of~\eqref{ref:GWB}.
\end{proposition}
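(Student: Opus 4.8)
The plan is to carry over the fixed-point iteration of \cite{Alvarez-Esteban:2015aa} to the present, possibly degenerate, setting; by Proposition~\ref{prop:lien} it suffices to work with the functional $\mathcal{G}$ and the matrices $\tilde{K}_i = A^{-1/2}(P_i^TS_iP_i)A^{-1/2}$, bearing in mind that $\mathcal{N}(0,S)$ solves \eqref{ref:GWB} iff $\mathcal{N}(0,A^{1/2}SA^{1/2})$ minimizes $\mathcal{G}$. Writing $B_n = \sum_{i=1}^p \lambda_i (K_n^{1/2}\tilde{K}_iK_n^{1/2})^{1/2}$, the scheme reads $K_{n+1} = K_n^{-1/2}B_n^2K_n^{-1/2}$, and the first thing to check is that it is well defined, i.e. that $B_n\succ0$ whenever $K_n\succ0$. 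Indeed, each summand of $B_n$ is positive semi-definite with range $K_n^{1/2}\,\mathrm{Im}(\tilde{K}_i) = K_n^{1/2}\,\mathrm{Im}(A^{-1/2}P_i^T)$ (here $S_i\succ0$ is used), and since $A$ is assumed invertible, i.e. $\bigcap_i\mathrm{Ker}(P_i)=\{0\}$, these ranges span $\R^d$; hence $B_n\succ0$ and $K_{n+1}\succ0$, so by induction the whole sequence consists of positive definite matrices. I also record the identity $B_n = (K_n^{1/2}K_{n+1}K_n^{1/2})^{1/2}$, which is the key to passing to the limit later.

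Next I would establish the monotonicity of $\mathcal{G}(K_n)$. Since $K_n\succ0$ and $\tilde{K}_i\succeq0$, the linear map $\mathcal{T}_i^{(n)} = K_n^{-1/2}(K_n^{1/2}\tilde{K}_iK_n^{1/2})^{1/2}K_n^{-1/2}$ is the optimal transport map from $\mathcal{N}(0,K_n)$ onto $\mathcal{N}(0,\tilde{K}_i)$ (only absolute continuity of the source is needed, so degeneracy of $\tilde{K}_i$ is harmless), and $\mathcal{T}^{(n)}:=\sum_i\lambda_i\mathcal{T}_i^{(n)} = K_n^{-1/2}B_nK_n^{-1/2}$ pushes $\mathcal{N}(0,K_n)$ to $\mathcal{N}(0,K_{n+1})$. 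Coupling $\mathcal{N}(0,\tilde{K}_i)$ with $\mathcal{N}(0,K_{n+1})$ through $(\mathcal{T}_i^{(n)},\mathcal{T}^{(n)})\#\mathcal{N}(0,K_n)$, and using the identity $\sum_i\lambda_i|a_i-y|^2 = \sum_i\lambda_i|a_i-\bar a|^2 + |\bar a-y|^2$ with $\bar a=\sum_i\lambda_ia_i$ and $y=x$, one gets
\[
\mathcal{G}(K_{n+1}) \le \mathcal{G}(K_n) - \int |\mathcal{T}^{(n)}x-x|^2\,d\mathcal{N}(0,K_n)(x) \le \mathcal{G}(K_n) - W_2^2\big(\mathcal{N}(0,K_n),\mathcal{N}(0,K_{n+1})\big),
\]
the last inequality because $(\mathrm{I}_d,\mathcal{T}^{(n)})\#\mathcal{N}(0,K_n)$ couples $\mathcal{N}(0,K_n)$ and $\mathcal{N}(0,K_{n+1})$. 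Thus $\mathcal{G}(K_n)$ decreases, hence converges, and $W_2^2(\mathcal{N}(0,K_n),\mathcal{N}(0,K_{n+1}))\to0$.

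For compactness, from $\mathcal{G}(K_n)\le\mathcal{G}(K_0)$ and $\mathrm{Tr}(K_n) = \int|x|^2\,d\mathcal{N}(0,K_n) \le 2W_2^2(\tilde{\nu}_i,\mathcal{N}(0,K_n)) + 2\,\mathrm{Tr}(\tilde{K}_i)$ I obtain a uniform bound on $\mathrm{Tr}(K_n)$, so the $K_n$ stay in a compact set of positive semi-definite matrices and one can extract $K_{n_l}\to K\succeq0$; by the previous paragraph $W_2(\mathcal{N}(0,K_{n_l}),\mathcal{N}(0,K_{n_l+1}))\to0$, hence also $K_{n_l+1}\to K$. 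The two expressions for $B_{n_l}$ now give, by continuity of the matrix square root on positive semi-definite matrices, $B_{n_l}=(K_{n_l}^{1/2}K_{n_l+1}K_{n_l}^{1/2})^{1/2}\to (K^{1/2}\,K\,K^{1/2})^{1/2}=K$ on one hand and $B_{n_l}=\sum_i\lambda_i(K_{n_l}^{1/2}\tilde{K}_iK_{n_l}^{1/2})^{1/2}\to\sum_i\lambda_i(K^{1/2}\tilde{K}_iK^{1/2})^{1/2}$ on the other; equating the limits yields $K=\sum_i\lambda_i(K^{1/2}\tilde{K}_iK^{1/2})^{1/2}$, which, after substituting $K=A^{1/2}SA^{1/2}$ and $\tilde{K}_i=A^{-1/2}(P_i^TS_iP_i)A^{-1/2}$, is exactly \eqref{eq:pointfixe}. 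Finally, if $K$ is positive definite then $S=A^{-1/2}KA^{-1/2}$ is invertible, so Proposition~\ref{prop:gaussian_solution2} shows that $\mathcal{N}(0,S)$ solves \eqref{ref:GWB}; if in addition the hypotheses of Proposition~\ref{prop:gauss_uniqueness} hold, \eqref{ref:GWB} has a unique centered Gaussian solution, which therefore must be $\mathcal{N}(0,S)$.

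The main obstacle, and the reason the full-rank results of \cite{Alvarez-Esteban:2015aa} cannot be invoked verbatim, is that the iteration map $K\mapsto K^{-1/2}B(K)^2K^{-1/2}$ involves $K^{-1/2}$ and is not continuous at degenerate matrices, so one cannot pass to the limit inside it. The remedy is precisely the combination used above: the rewriting $B_n=(K_n^{1/2}K_{n+1}K_n^{1/2})^{1/2}$ expresses $B_n$ through continuous operations only, while the monotonicity estimate forces $W_2(\mathcal{N}(0,K_n),\mathcal{N}(0,K_{n+1}))\to0$, so that along a converging subsequence $K_{n_l}$ and $K_{n_l+1}$ share the same limit and $B_{n_l}$ can be evaluated in two ways. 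The remaining verifications --- that the optimal-map formula and the monotonicity computation survive degeneracy of the $\tilde{K}_i$, that $S_i\succ0$ guarantees $\mathrm{Im}(\tilde{K}_i)=\mathrm{Im}(A^{-1/2}P_i^T)$ and hence $\sum_i\mathrm{Im}(\tilde{K}_i)=\R^d$, and the telescoping bound --- are routine.
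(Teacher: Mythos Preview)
Your proof is correct and follows essentially the same route as the paper's: establish positive definiteness of the iterates, obtain the monotonicity inequality $\mathcal{G}(K_n)-\mathcal{G}(K_{n+1})\ge W_2^2(\mathcal{N}(0,K_n),\mathcal{N}(0,K_{n+1}))$, extract a converging subsequence by compactness, and pass to the limit in the rewritten identity $(K_n^{1/2}K_{n+1}K_n^{1/2})^{1/2}=\sum_i\lambda_i(K_n^{1/2}\tilde{K}_iK_n^{1/2})^{1/2}$. The only differences are cosmetic: the paper proves $B_n\succ0$ via the estimate $\sum_i\lambda_i\tilde{K}_i\ge\alpha\mathrm{I}_d$ together with a scaling bound, whereas you use a direct span-of-ranges argument, and the paper cites \cite[Proposition~3.3]{Alvarez-Esteban:2015aa} for the monotonicity step while you reprove it explicitly (which has the mild advantage of making transparent that only positive definiteness of $K_n$, not of the $\tilde{K}_i$, is needed).
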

\proof
For any $d\times d$  positive definite symmetric  matrix $K$, define $$L(K)  = K^{-1/2}\left( \sum_{i=1}^p \lambda_{i} (K^{1/2} \tilde{K}_iK^{1/2})^{1/2} 
\right)^{2} K^{-1/2}.$$ Observe that $L(K)$ is still symmetric and positive semidefinite. Since all matrices $S_i$ are positive definite, we can find $\alpha>0$ such that $S_i \ge \alpha \mathrm{I}_d $ for all $i$. It follows that
$$  \alpha \mathrm{I}_d = \alpha \sum_{i=1}^p \lambda_i A^{-1/2}P_i^T  P_i A^{-1/2}\leq \sum_{i=1}^p \lambda_i A^{-1/2}P_i^T S_i P_i A^{-1/2} = \sum_{i=1}^p\lambda_{i}  \tilde{K}_i.$$  
Thus, $\sum_{i=1}^p\lambda_{i}  \tilde{K}_i$ is invertible and so is $\sum_{i=1}^p\lambda_{i}  K^{1/2}\tilde{K}_iK^{1/2}$. Now, choose $\delta$  such that $K^{1/2}\tilde{K}_iK^{1/2} \le \delta \mathrm{I}_d$ for all $i$. Clearly,  
$$ \sum_{i=1}^p\lambda_{i}  (K^{1/2}\tilde{K}_iK^{1/2})^{1/2} \ge \frac{1}{\sqrt{\delta}}\sum_{i=1}^p\lambda_{i}  (K^{1/2}\tilde{K}_iK^{1/2})$$ so $ \sum_{i=1}^p\lambda_{i}  (K^{1/2}\tilde{K}_iK^{1/2})^{1/2} $ remains full rank and so is $L(K)$.
It follows that the iterative scheme~\eqref{eq:iterative_gauss} is well defined and generates a sequence of  positive definite symmetric matrices. 

Now, let $(K_n)_{n\geq0}$ be such a sequence. 
Since $K_n$ is invertible and $K_{n+1}=L(K_n)$, using Proposition 3.3 in~\cite{Alvarez-Esteban:2015aa}, we have
\[
\mathcal{G}(K_n) - \mathcal{G}(K_{n+1}) \ge W_2^2(\mathcal{N}(0,K_n),\mathcal{N}(0,K_{n+1})).\]
The sequence $(\mathcal{G}(K_n))_{n\ge 0}$ is thus positive and decreasing, and so it converges in $\R^+$. This implies in particular that $W_2(\mathcal{N}(0,K_n),\mathcal{N}(0,K_{n+1})) \stackrel{n\rightarrow \infty}{\longrightarrow} 0$. Since $(\mathcal{G}(K_n))_{n\ge 0}$ converges, we can easily deduce that $\mathcal{N}(0,K_n)$ is tight (using the same argument as in the proof of Proposition \ref{prop:existence_Gaussian}) and that the sequence of covariances $(K_n)_{n\ge 0 }$ is bounded and thus has a  subsequence $(K_{n_l})_{l\ge 0 }$  which converges to a symmetric matrix  $K$. This matrix $K$ is positive semidefinite but might not be full rank. It follows that the sequence $\mathcal{N}(0,K_{n_l})$ converges in the space   of Gaussian measures on $\R^d$ equipped with the Wasserstein metric $W_2$ towards $\mathcal{N}(0,K)$. Now,
\[W_2(\mathcal{N}(0,K_{n_l+1}),\mathcal{N}(0,K)) \le W_2(\mathcal{N}(0,K_{n_l+1}),\mathcal{N}(0,K_{n_l})) + W_2(\mathcal{N}(0,K_{n_l}),\mathcal{N}(0,K))\stackrel{n\rightarrow \infty}{\longrightarrow} 0,\]
so $(K_{n_l+1})_{l\ge 0 }$  also converges to  $K$.
Since $K_{n_l+1} = L(K_{n_l})$, we have $(K_{n_l}^{1/2} K_{{n_l}+1} K_{n_l}^{1/2})^{1/2} =  \sum_{i=1}^p \lambda_{i} (K_{n_l}^{1/2} \tilde{K}_iK_{n_l}^{1/2})^{1/2}$ and by continuity of both terms it follows that 
\[K =  \sum_{i=1}^p \lambda_{i} (K^{1/2}\tilde{K}_iK^{1/2})^{1/2}\]
which means that $S = A^{-1/2} K A^{-1/2}$ is a solution of Equation~\eqref{eq:pointfixe}. We deduce that $\mathcal{N}(0,K)$ is a local minimizer of~\eqref{ref:GWB}, as defined in Proposition~\ref{prop:gaussian_solution2}, and that if $K$ is full rank, $\mathcal{N}(0,K)$ is a solution of~\eqref{ref:GWB}.
\endproof

\section{Experiments \label{sec: experiments}}

This section gathers experiments illustrating the behavior of the
generalized Wasserstein barycenters. We start by 
explaining how we can numerically compute solutions of~\eqref{ref:GWB}, using either linear programming to compute the exact solution, or the Sinkhorn algorithm to compute an approximation of the solution.
We then present several results of
generalized barycenters between disagreeing projections in 2 or 3 dimensions, illustrating
how solutions find a compromise between several distributions which
do not coincide on their common subspaces.
The section concludes with experiments on Gaussian distributions.

\subsection{Computing numerical solutions of ~\eqref{ref:GWB}}
\label{sec:computing}

To solve~\eqref{ref:GWB} between empirical distributions $\nu_i$ ({\em i.e.} distributions which can be written as linear combinations of Dirac masses), several strategies are possible. As described in Section~\ref{sec:reformulation}, a probability measure $\gamma^*$ minimizes $\mathcal{F}$ for the measures $\nu_i$  if and only if $A^{1/2}\# \gamma^*$ minimizes $\mathcal{G}$ (see equation~\eqref{eq:G}) between the measures $\tilde{\nu}_i = (A^{-1/2}P_i^T)\# \nu_i$. Thus, any classical Wasserstein barycenter algorithm can be used to minimize $\mathcal{G}$ and deduce solutions for~\eqref{ref:GWB}. 

When $p=2$, the solution can be directly deduced from an optimal plan (for $W_2$) $\pi^*$ between   $\tilde{\nu}_0$ and $\tilde{\nu}_1$. Indeed, if we define \[\forall x,y \in \R^d,\;\;P_t(x,y) = (1-t)x+ty,\]  
then for any $t\in [0,1]$, the probability measure $\tilde{\nu}_t:=P_t\# \pi^*$
is a barycenter between $\tilde{\nu}_0$ and $\tilde{\nu}_1$ for the weights $(1-t,t)$, {\em i.e.} 
\[\nu_t \in \argmin_{\rho}(1-t)W_2^2(\tilde{\nu}_0,\rho)+ tW_2^2(\tilde{\nu}_1,\rho).\]
To compute this optimal plan, we can use any exact or approximate dedicated algorithm, as illustrated by Figure~\ref{fig:diff_lp_sinkhorn}.

\begin{figure}[h!]
\centering
\includegraphics[trim={120 60 60 60}, clip, width=0.49\linewidth]{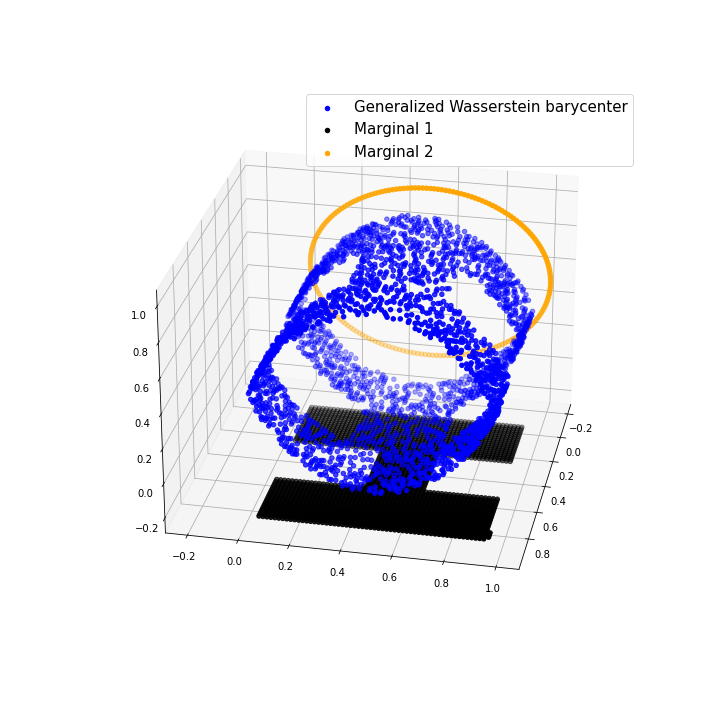}
\hspace{-1em}
\includegraphics[trim={120 60 60 60}, clip,width=0.49\linewidth]{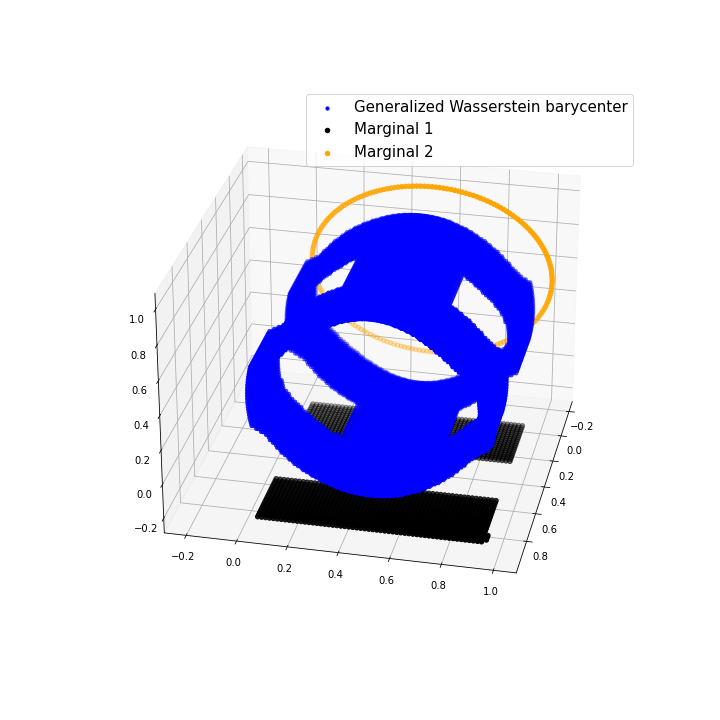}

\caption{Generalized Wasserstein Barycenter (in blue) between two marginals (black 'H' shape and orange circle) $\nu_1$ and $\nu_2$, with $P_1 = \begin{pmatrix}1& 0 & 0 \\ 0  &1 &0 \end{pmatrix}$ and $P_2 = \begin{pmatrix} 0& 1 & 0 \\0  &0 &1 \end{pmatrix}$. 
Left: solution given by solving directly the optimal transport problem between $\tilde{\nu}_1$ and $\tilde{\nu}_2$. Right: approximate solution given by solving the regularized version of the same problem with Sinkhorn algorithm. The resulting solution is thresholded to obtain the displayed set of points.}
\label{fig:diff_lp_sinkhorn}
\end{figure}

When $p \ge 3$, a first possibility is to solve the multimarginal problem~\eqref{eq:MMOT} in order to deduce a generalized barycenter using Proposition~\ref{prop:p=GWB}, as seen in Section \ref{sec: multimarginal}. 
This is possible (although computationally heavy) with any standard linear programming solver if we want to compute the exact solution of the generalized barycenter problem. A faster alternative is to solve the regularized version of this multi-marginal problem with the Sinkhorn algorithm, as described in Section 4.1 of~\cite{benamou15}.    
These approaches using the multimarginal formulation  permit to compute the solution of~\eqref{ref:GWB} when the supports of the distributions $\tilde{\nu}_i$ are not fixed. They minimize $\mathcal{G}$ with respect to both the support and the mass of the distribution. However, their computational cost become prohibitive when the  number of marginals increases and they are therefore limited to a small number of marginals.

\begin{figure}[h!]
\centering
\includegraphics[trim=120 60 60 60, clip, width=0.49\linewidth]{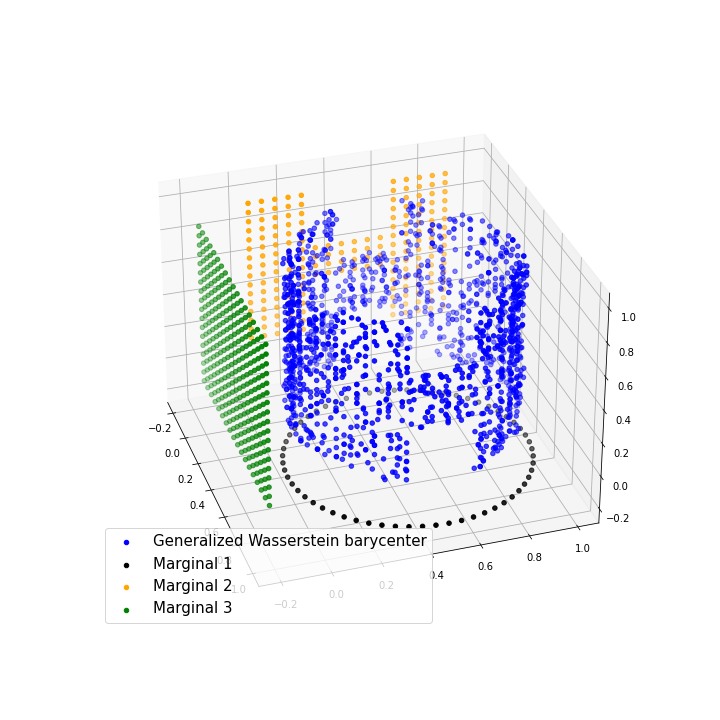}
\hspace{-1em}
\includegraphics[trim=120 60 60 60, clip,width=0.49\linewidth]{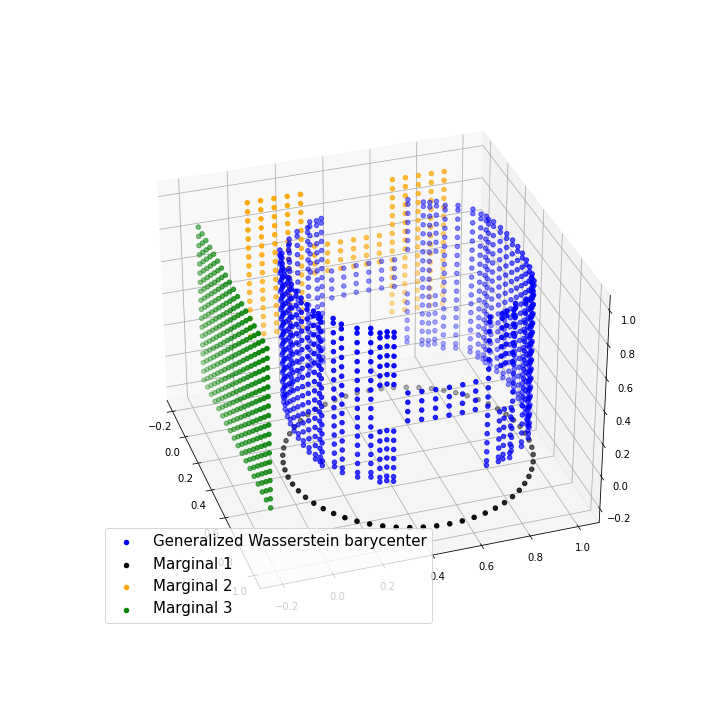}
\caption{Generalized Wasserstein Barycenter (in blue) between three marginals (green square, black circle and orange 'H' shape) $\nu_1$, $\nu_2$ and $\nu_3$, with $P_1 = \begin{pmatrix}
 1& 0 & 0 \\
0  &1 &0  
\end{pmatrix}$, $P_2 = \begin{pmatrix}
 0& 1 & 0 \\
0  &0 &1  
\end{pmatrix}$ and $P_3 = \begin{pmatrix}
 1& 0 & 0 \\
0  &0 &1  
\end{pmatrix}$. Left: barycenter with a fixed number of masses, obtained by optimizing the mass locations but not the weights, as described in~\cite{cuturi2014fast}.
 Right: approximate solution given by solving~\eqref{eq:MMOT}
 with the multimarginal Sinkhorn algorithm and projecting back to find $\gamma^*$ thanks to Proposition~\ref{prop:p=GWB} (the resulting solution is thresholded to obtain the displayed set of points).   }
\label{fig:diff_free_sinkhorn}
\end{figure}

A solution which scales much better with the number of marginals but still belongs to the class of free support approaches is to find a barycenter with a given number of masses, optimizing on the mass locations but not on the weights, as described in~\cite{cuturi2014fast}. This solution provides a fast and convenient way to compute an approximate solution of~\eqref{ref:GWB} when the number of marginals and the number of points in the original measures $\nu_i$ increase. 
Figure~\ref{fig:diff_free_sinkhorn} shows on an example the solutions provided respectively by this approach and by the multimarginal Sinkhorn. Since the solution provided by Sinkhorn contains mass everywhere, it is thresholded to obtain the displayed set of points. The two solutions present similar 3d shapes, although the Sinkhorn solution has a more regular dot pattern, due to the way it is computed. In practice, optimizing only on the mass locations is much faster than the multimarginal strategy.

Finally, for distributions with a known and fixed support (for instance an image grid), iterative Bregman projections that optimize only weights can be used, as described in Section 3.3 of~\cite{benamou2015iterative}.

\subsection{Generalized barycenters between disagreeing marginals }

An interesting aspect of generalized barycenters lies in the way they compromise between disagreeing marginals.
Figures~\ref{fig:disagreeing} and~\ref{fig:disagreeing2} illustrate this behavior on several
examples between different sets of disagreeing
marginals. In these experiments, the solution is computed thanks to the multimarginal Sinkhorn algorithm and projected back thanks to Proposition~\ref{prop:p=GWB}.  For each figure, we show on the left the
 barycenter $\nu$ (black dots) between the original two
dimensional distributions $\nu_i$ (colored dots, each color
corresponding to a different $i$). On the right, we show for each
$i$ the superposition of  $P_i\# \nu$ (black) and $\nu_i$. For
instance, in first experiment, the red square is narrower than
the blue heart in their common dimension, and the
barycenter has to compromise  between these two shapes on this
dimension. 

A last example with three point distributions, representing different superheros, is shown on Figure~\ref{fig:superheros}. The solution is computed here by fixing the number of points to 2000 and optimizing only on their locations, since the number of points in each $\nu_i$ is too large for the multimarginal to run in a reasonable time on a laptop.

\begin{figure}[h!]
        \subfloat[]{%
            \includegraphics[width=0.7\linewidth]{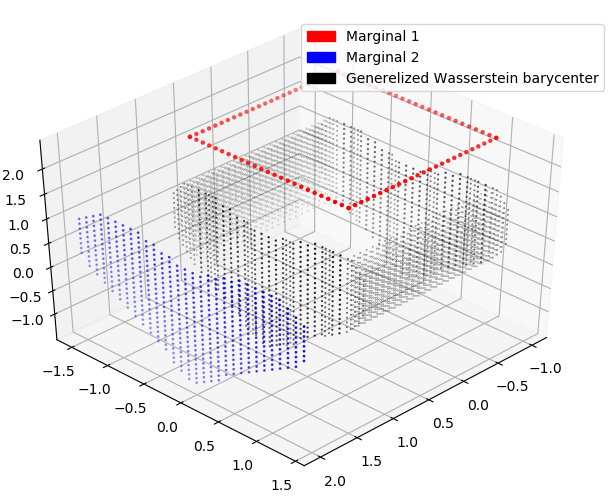}}
        \subfloat[]{\begin{tabular}[b]{c}%
            \includegraphics[width=0.25\linewidth, angle = 90]{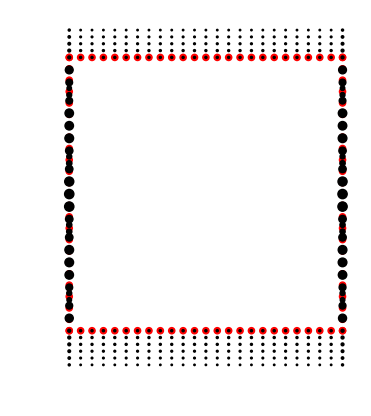}\\            
            \includegraphics[width=0.25\linewidth]{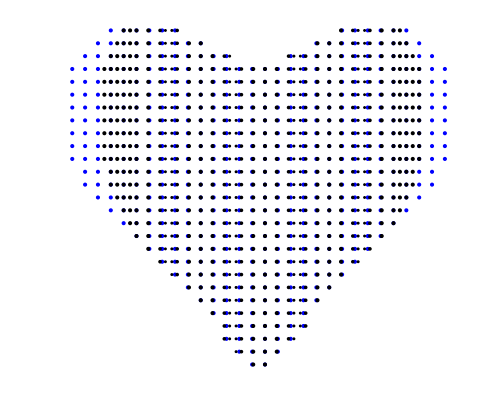}
                    \end{tabular}}

        \subfloat[]{%
            \includegraphics[width=0.7\linewidth]{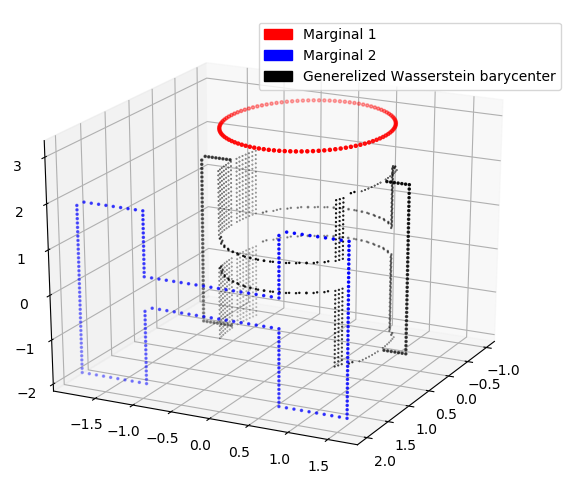}}
        \subfloat[]{\begin{tabular}[b]{c}%
            \includegraphics[width=0.25\linewidth, angle = 90]{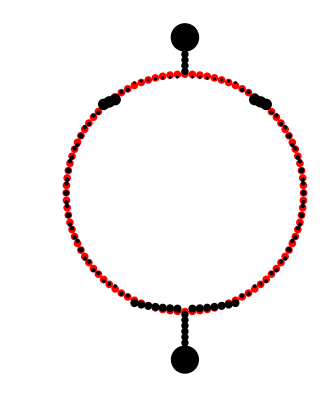}\\            
            \includegraphics[width=0.25\linewidth]{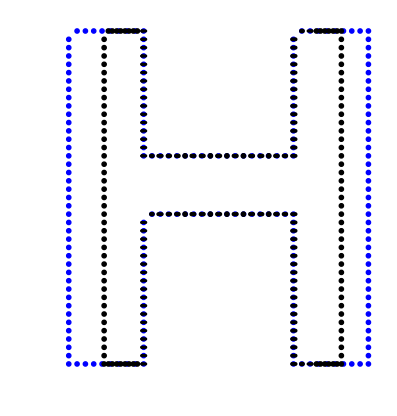}
                    \end{tabular}}
                  
            \caption{Generalized barycenters between disagreeing 2d
      distributions. Each line corresponds to an experiment. On the left, the
three dimensional berycenter $\nu$ (black dots) between the original two
dimensional distributions $\nu_i$ (colored dots, each color
corresponding to a different $i$). On the right, for each
$i$, we show the superposition of  $P_i\# \nu$ (black) and $\nu_i$. }
    \label{fig:disagreeing}        
    \end{figure}

\begin{figure}[h!]
        \subfloat[]{%
            \includegraphics[width=0.7\linewidth]{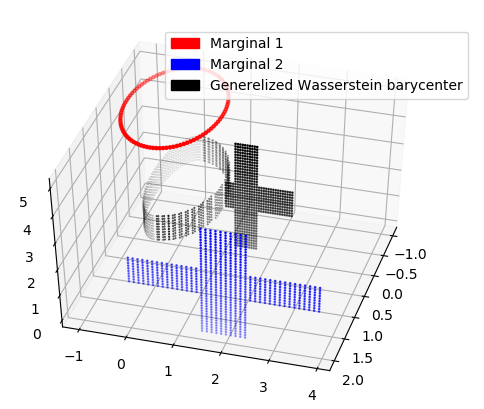}}
        \subfloat[]{\begin{tabular}[b]{c}%
            \includegraphics[width=0.25\linewidth, angle = 90]{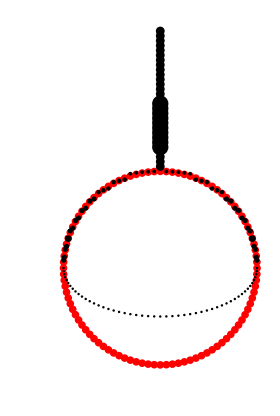}\\            
            \includegraphics[width=0.25\linewidth]{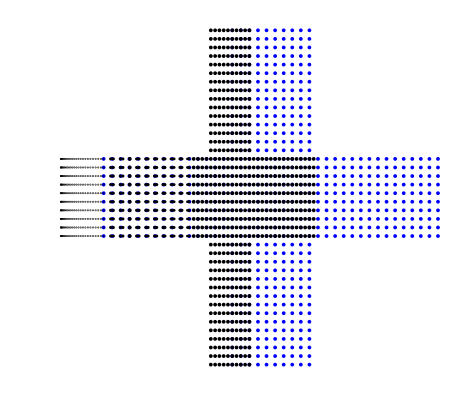}
                    \end{tabular}}

        \subfloat[]{%
            \includegraphics[width=0.7\linewidth]{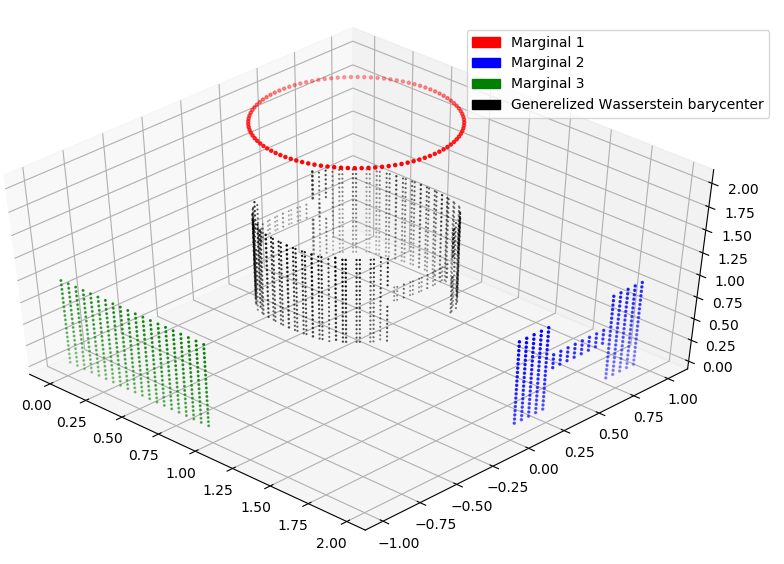}}
        \subfloat[]{\begin{tabular}[b]{c}%
            \includegraphics[width=0.25\linewidth, angle = 90]{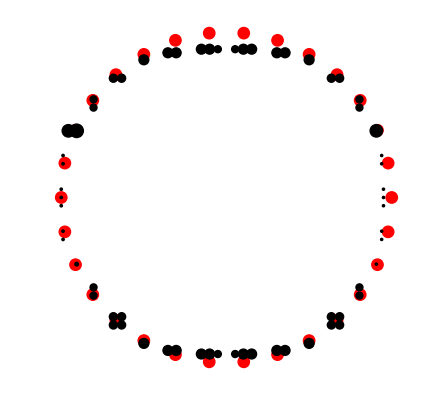}\\            
            \includegraphics[width=0.25\linewidth]{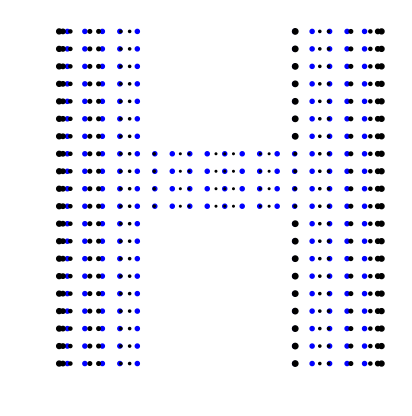}
                    \end{tabular}}
                  
            \caption{Generalized barycenters between disagreeing 2d
      distributions. Each line corresponds to an experiment. On the left, the
three dimensional berycenter $\nu$ (black dots) between the original two
dimensional distributions $\nu_i$ (colored dots, each color
corresponding to a different $i$). On the right, for each
$i$, we show the superposition of  $P_i\# \nu$ (black) and $\nu_i$. }
    \label{fig:disagreeing2}        
    \end{figure}

\begin{figure}[h!]
\centering
\includegraphics[width=0.9\linewidth]{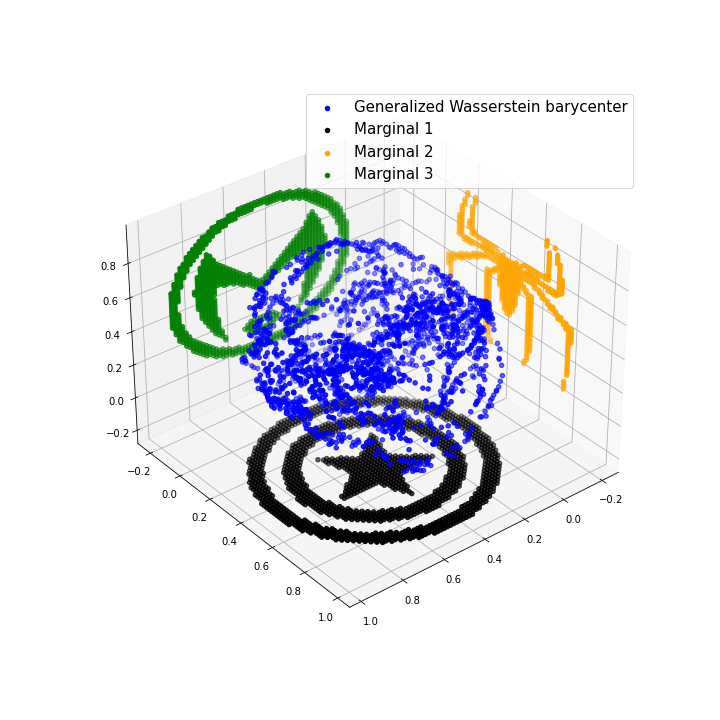}
\vskip -1em

\includegraphics[width=0.8\linewidth]{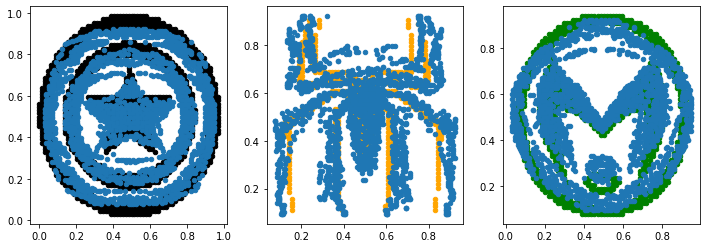}
\caption{Generalized Wasserstein Barycenter (in blue) between three dots distributions representing logos of super heros (Captain America in black, Scarlet Witch in green and Spiderman in orange), with $P_1 = \begin{pmatrix}
 1& 0 & 0 \\
0  &1 &0  
\end{pmatrix}$, $P_2 = \begin{pmatrix}
 0& 1 & 0 \\
0  &0 &1  
\end{pmatrix}$ and $P_3 = \begin{pmatrix}
 1& 0 & 0 \\
0  &0 &1  
\end{pmatrix}$. Top: three marginals and generalized barycenter with a fixed number of masses (2000 points here), obtained by optimizing the mass locations but not the weights, as described in~\cite{cuturi2014fast}.
 Bottom: projections $P_i\# \gamma^*$ of the generalized barycenter superposed with the $\nu_i$. }
\label{fig:superheros}
\end{figure}

\subsection{Generalized Gaussian barycenters}

\subsubsection{Gaussian measures}

We have shown in Section~\ref{sec:gaussian_gwb} how to solve~\ref{ref:GWB} for
Gaussian measures. We have also proven that for such measures, a
finite number of $P_i$ (see Proposition~\ref{prop:gauss_uniqueness}) is needed to reconstruct
the Gaussian perfectly from all the $P_i\#\nu_i$.

We illustrate this property on Figure~\ref{fig:gaussian_measure}, which
shows the reconstruction of a Gaussian measure in 2 dimensions from
its three projections on the 3 axes $(1,0)$, $(0,1)$ and  $
(\frac{1} {\sqrt{2}},   \frac{1} {\sqrt{2}})$,
using the iterative scheme of Section~\ref{sec:iterative_gaussian}
 to reconstruct the covariance matrix. As predicted by
Proposition~\ref{prop:gauss_uniqueness}, the Gaussian measure can be
reconstructed perfectly from these three projections. The same experiment
can be conducted by adding noise to the projections (on the mean and
the standard deviation for instance). In this case, using more
projections increases the robustness of the reconstruction.

\begin{figure}[h!]
  \centering
  \includegraphics[width=0.40\textwidth]{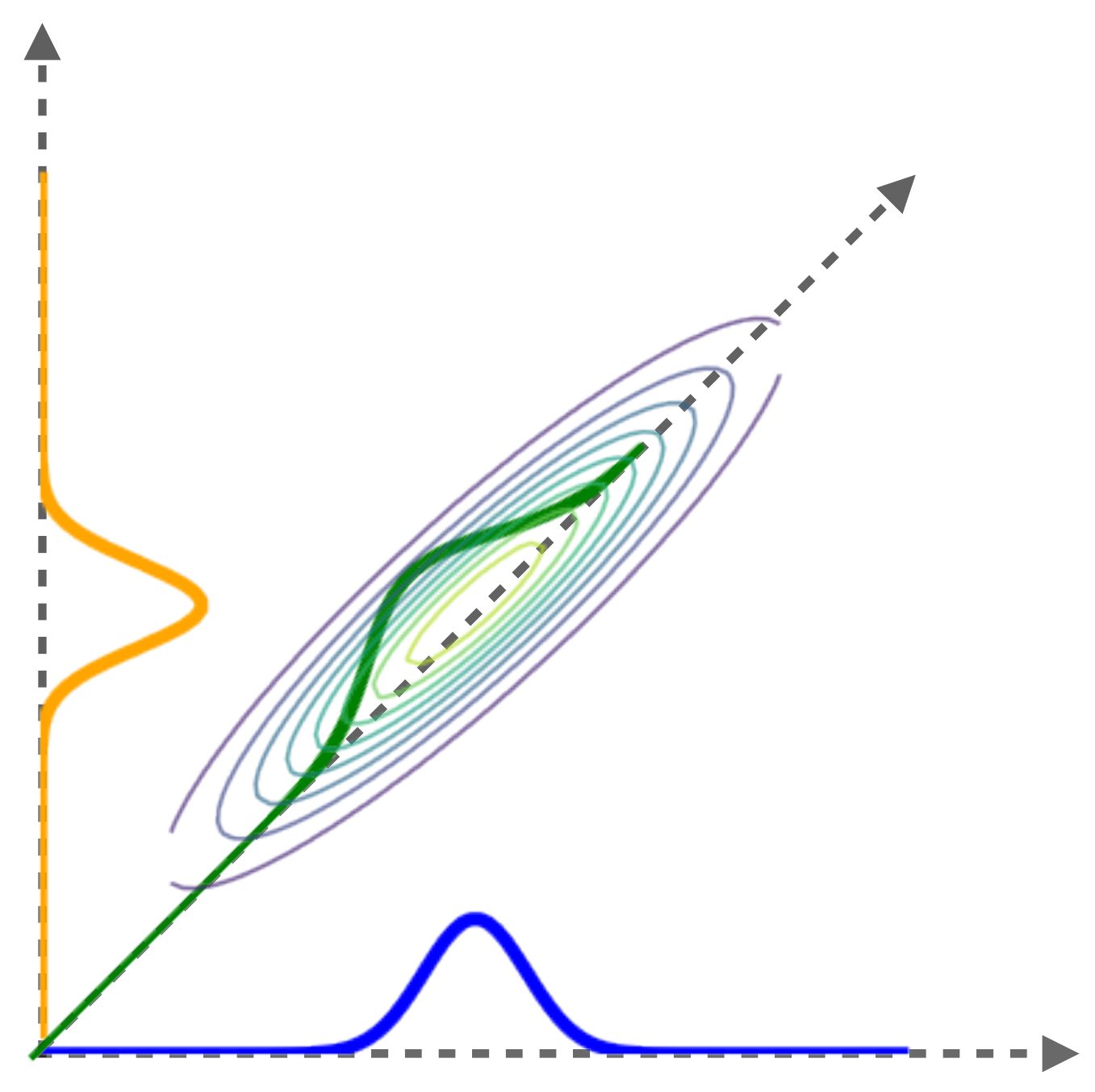}
    \includegraphics[width=0.5\textwidth]{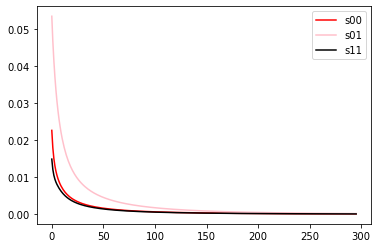}
    \caption{On the left, three one dimensional Gaussian measures
      obtained by projecting the two
      dimensional Gaussian measure centered at $(0.5,0.5)$ with
      covariance matrix 
       $\begin{pmatrix}
 0.06& 0.05  \\
0.05  &0.05  
\end{pmatrix}$
on the three axes $(1,0)$, $(0,1)$ and  $(\frac{1} {\sqrt{2}}, \frac{1} {\sqrt{2}})$. Some level lines of the 2D Gaussian measure  are also displayed on the figure.
On the right, we show the convergence of the different coefficients of the covariance matrix reconstructed from these three projections, using the iterative scheme of Section~\ref{sec:iterative_gaussian}  to reconstruct the covariance matrix.
Each curve represent the absolute difference between a coefficient of the reconstructed covariance matrix and the same coefficient in the original covariance.}
\label{fig:gaussian_measure}
\end{figure}

\subsubsection{Gaussian mixtures}
In~\cite{delon2020wasserstein}, a distance  between Gaussian mixtures on
Euclidean spaces is
defined by restricting the set of possible coupling measures in the
optimal transport problem to Gaussian mixtures. More precisely, if
$\nu_1$ and $\nu_2$ are two Gaussian mixtures on $\R^d$, the distance is defined as
\begin{equation}
  \label{eq:mw2_definition}
   MW_2^2(\nu_1,\nu_2) :=  \inf_{\pi \in \Pi(\nu_1,\nu_2) \cap \mathrm{GMM}_{2d}(\infty)} \int_{\R^d\times\R^d} |x_2-x_1|^2\,d\pi(x_1,x_2),
 \end{equation}
 where $\mathrm{GMM}_{2d}(\infty)$ is the set of all finite Gaussian mixtures
 on $\R^{2d}$.  
A simple discrete
formulation is derived for this distance, which makes it suitable for
problems with massive data, as long as these data are well represented
by Gaussian mixtures.
Barycenters between Gaussian mixtures can be defined for this distance
and  can be
deduced from a corresponding multimarginal problem, as detailed in~\cite{delon2020wasserstein}. This formulation
ensures that barycenters between Gaussian mixtures remain Gaussian
mixtures themselves, which is not the case with classical optimal transport.

We can use this framework to define generalized barycenters between Gaussian mixtures by replacing $W_2^2$ by $MW_2^2$ in~\eqref{ref:GWB}. This makes sense since the $P_i$ are linear, which means that for any Gaussian mixture $\gamma$, all the $P_i\#\gamma$ remain Gaussian mixtures.  More precisely, for $p$ Gaussian mixtures $\nu_1, \nu_2,\dots \nu_p$ on their respective subspaces $\R^{d_i}$, the problem becomes  
\begin{equation}
\inf_{\gamma \in \mathcal{P}_2(\R^d) \cap \mathrm{GMM}_{d}(\infty)} \sum_{i=1}^p \lambda_i MW_2^2(\nu_i,P_i\#\gamma).\label{eq:GMM_projection_consensus}
\end{equation}
The reformulation described in Section~\ref{sec:reformulation} remains valid. Indeed, for two Gaussian mixtures $\nu$ and $\gamma$, and any linear application $T$, 
the equality of Lemma~\ref{lem:couplage} still holds  if we restrict the plans $\pi$ to belong to Gaussian mixtures. As a consequence, we can use the algorithms described in~\cite{delon2020wasserstein} to compute barycenters between Gaussian mixtures in order to solve ~\eqref{eq:GMM_projection_consensus}.

We implemented this approach on a toy example shown on Figure~\ref{fig:GMM}. In this example, a  2D Gaussian mixture $\gamma$ is projected on  three different axes $(1,0)$, $(0,1)$ and  $
(\frac{1} {\sqrt{2}},   \frac{1} {\sqrt{2}})$, giving three 1D Gaussian mixtures $\nu_i$. The corresponding $\tilde{\nu}_i = A^{-1/2}P_i^T\#\nu_i$ are computed and the problem   
\begin{equation}
\inf_{\mu \in \mathcal{P}_2(\R^d) \cap \mathrm{GMM}_{d}(\infty)} \sum_{i=1}^p \lambda_i MW_2^2(\tilde{\nu}_i,\mu)\label{eq:GMM_G_projection_consensus}
\end{equation}
is solved in order to reconstruct the original Gaussian mixture $\gamma$.

\begin{figure}[h!]
  \centering
  \includegraphics[width=0.4\textwidth]{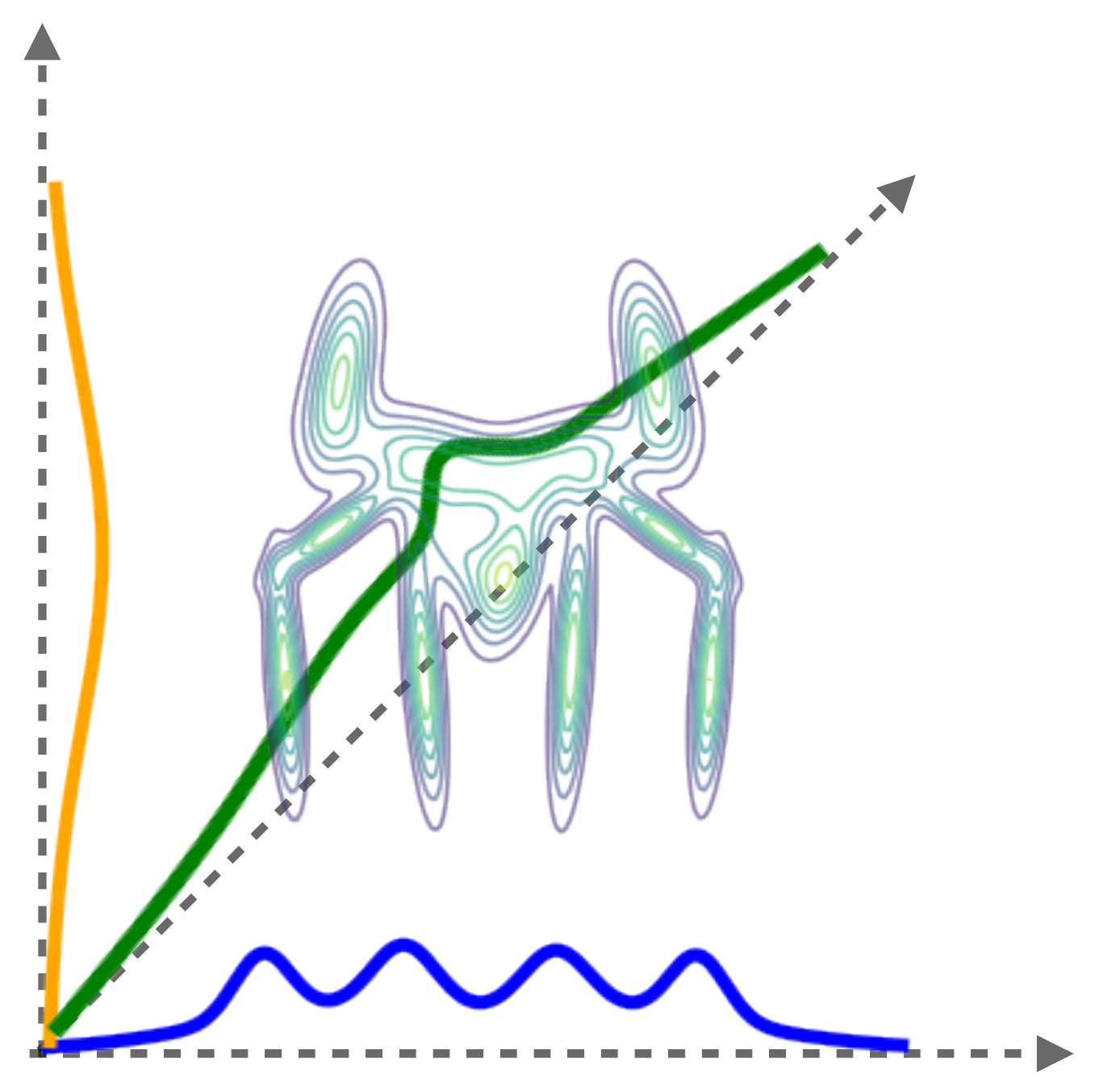}
    \includegraphics[width=0.5\textwidth]{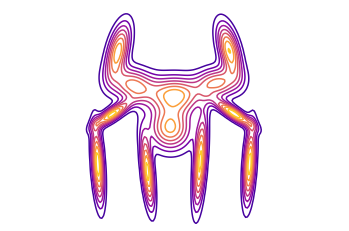}
    \caption{On the left, level lines of a mixture of 12 Gaussians, and projections on the three axes $(1,0)$, $(0,1)$ and  $
(\frac{1} {\sqrt{2}},   \frac{1} {\sqrt{2}})$. On the right, we reconstruct the 2D Gaussian mixture from the simple knowledge of these three one dimensional GMM. }
  \label{fig:GMM}
\end{figure}

\section*{Acknowledgments}
The authors thank Guillaume Carlier for fruitful discussions. 
The first author acknowledges support from the French Research Agency through the MISTIC project (ANR-19-CE40-005) and support from the Institut Universitaire de France.
The second author is supported by a grant of the Simone and Cino Del Duca foundation.  
This research has 
been conducted within the FP2M federation (CNRS FR 2036).

\bibliographystyle{plain}
\addcontentsline{toc}{section}{\refname}
\bibliography{citations.bib}

\end{document}